\tikzset{
    state/.style={
           rectangle,
           rounded corners,
           draw=black, very thick,
           minimum height=2em,
           inner sep=2pt,
           text centered,
           },
}
\newcommand{\mc}[1]{{\mathcal #1}}
\newcommand{\mf}[1]{{\mathfrak #1}}
\newcommand{\bb}[1]{{\mathbb #1}}
\newcommand\bbZ{{\mathbb Z}}
\newcommand\al{\alpha}
\newcommand\llangle{\langle\!\langle}
\newcommand\rrangle{\rangle\!\rangle}
\newcommand\E{{\mathbb E}}
\newcommand\bbE{{\mathbb E}}
\newcommand\R{{\mathbb R}}
\newcommand\RR{{\mathbb R}}
\newcommand\T{{\mathbb I}}
\newcommand\bbP{{\mathbb P}}
\newcommand\Z{{\mathbb Z}}
\newcommand\eps{\epsilon}
\newcommand \ga{\gamma}
\newcommand \la{\lambda}
\newcommand \si{\sigma}
\newcommand{\bbT}{\mathbb T}
\newcommand\Om{\Omega}
\renewcommand{\ge}{\geqslant}
\renewcommand{\le}{\leqslant}
\newcommand{\dd  }{\mathrm{d}}
\renewcommand{\hat}{\widehat}
\renewcommand{\tilde}{\widetilde}
\renewcommand{\bar}{\overline}
\numberwithin{equation}{section}
\thanks{This work was partially supported by the grant 346300 for IMPAN from the Simons Foundation and the matching 2015-2019 Polish MNiSW fund. }\
\thanks{T.K. acknowledges the support of  NCN grant UMO2016/23/B/ST1/00492. 
S.O. acknowledges the support of the  ANR-15-CE40-0020-01 grant LSD,
M.S. thanks Labex CEMPI (ANR-11-LABX-0007-01), and the ANR grant MICMOV (ANR-19-CE40-0012) of the French National Research Agency (ANR)}}
\begin{document}

\section{{Introduction}\label{sec:intro}}

The mathematical  derivation of the macroscopic evolution of the conserved quantities
of a physical system, from its microscopic dynamics, through a rescaling of space and time
(so called hydrodynamic limit) has been the subject of much research in the last 40 years
(cf.~\cite{KL} and references within). Although heuristic assumptions like \emph{local equilibrium} and
\emph{linear response} permit to formally derive the macroscopic equations \cite{spohn},
mathematical proofs are very difficult and most of the techniques used are based on relative
entropy methods (cf.~\cite{KL} and references within). Unfortunately,
in the diffusive scaling
when energy is one of the conserved quantities, relative entropy methods cannot be used: {since one would need sharp bounds on higher energy moments, which cannot be controlled by the relative entropy}.
In some situations a different approach, based on Wigner distributions, is effective in controlling the
macroscopic evolution of energy. This is the case for a chain of harmonic springs with a random flip
of sign of the velocities, provided with periodic boundary conditions, for which the total energy and the
total length of the system are the two conserved quantities,
and where the hydrodynamic limit has been proven in \cite{kos1}.

The purpose of the present article is to deal with the case when
microscopic mechanical forces and thermal heat baths acting on the boundaries {  are present}, 
and to determine macroscopic  boundary conditions for the hydrodynamic diffusive equations.
In the scaling limit the presence of  boundary conditions is challenging, as the action
of the forces and thermostats  become singular.
In \cite{bo2} the authors prove the existence and  uniqueness of the non-equilibrium stationary
state, even in the anharmonic case. The existence of the Green-Kubo formula for the
thermal conductivity is also proven in \cite{bo2}. However, it turns out very difficult to control the limit
properties as the size of the system becomes infinite (\textit{i.e.}~macroscopic).
In particular the rigorous proof of the Fourier law,
which states that the average energy current is inversely proportional
to the size of the system and proportional to the negative temperature difference of the thermostats, is still an open problem.
One of the main difficulties of this open dynamics is to prove that energy inside
the system remains proportional to its size
(a trivial fact for the periodic case where energy is globally conserved).
In fact the random flip of the velocity signs does not move the energy in the system
and the energy transport is entirely due to the hamiltonian part of the dynamics, that is very hard to control.

This difficulty forced us to consider a different energy conserving random dynamics,
where kinetic energy is exchanged between nearest neighbor particles in a continuous random
mechanism, see \cite{kos2}.
In this case the stochastic dynamics is also
responsible for energy transport. The non-equilibrium stationary
state (NESS) for this dynamics was already considered in \cite{bo1}, 
where the Fourier law was proven without external force,
\textit{i.e.}~in mechanical (but not thermal) equilibrium.
In \cite{kos2} the validity of the Fourier law for the NESS is extended
  also to the situation when  an external tension force is present
  (then the system is in both  mechanical and thermal non-equilibrium).
  Furthermore, the existence of both stationary macroscopic profiles for
the temperature and volume
stretch,  at least in some situations, are  established
in  \cite{kos2}.
In particular, the temperature profile has the interesting feature that
the stationary temperatures in the
bulk can be higher than at the boundaries, a general behavior
conjectured in the NESS for many systems \cite{olla19}.
Furthermore, because of the presence of other conservation laws,
the stationary energy current can have the same sign as the gradient
of the temperature -- {\color{cyan} a} phenomenon called 
 \emph{uphill diffusion phenomenon} in the literature.

Concerning the hydrodynamic limit, in the appendix section of  \cite{kos2}  we have 
formulated a heuristic argument, based on entropy production
estimates, that have not been proved there, and that substantiated the
validity of the macroscopic equations governing the dynamics in the case
of a random momentum exchange microscopic model, see Section 2.2 of
\cite{kos2}. Besides the aforementioned  entropy production
estimates, in order to obtain the hydrodynamic limit, one needs 
to establish also  the property of 
\emph{equipartition} of the random fluctuations of the mechanical and thermal
components of the microscopic energy density, which was postulated in
\cite[identity (A.46)]{kos2}. As we have pointed out in
\cite{kos2} this property seems to
be out of reach of the relative entropy method and some other approach
to resolve the difficulty is needed.  
In the present work we employ the Wigner distribution method to give a
rigorous prove of the hydrodynamic limit, for an  open system with a
random flip of momenta,
see Section \ref{sec:adiab-micr-dynam} below for its precise
formulation.

A crucial observation is the identity \eqref{energy-evol3ff} that
holds for the $L^2$ norm of the covariances of  random fluctuations
of momenta and stretches, which we obtain by careful analysis of time
evolution of the Fourier-Wigner functions defined in Section
\ref{sec:wave-function-1}. The last two terms in the right hand side
of \eqref{energy-evol3ff} correspond to the dissipation, due to the
stochastic dynamics in the bulk. The remaining two
terms describe the interaction between the fluctuation of the thermal and
mechanical components of the kinetic
energy  
at the boundary points and in the bulk of the system, respectively. In
order to control these terms we need to control the rate of damping of
the mechanical energy, which is done in  
Lemma \ref{lm010911-19}. These controls allow us to prove that the
$L^2$ norm of the covariances of  random fluctuations of momenta and
stretches, at the given time, grows with the logarithm of the size of the
system: this is the content of Proposition \ref{thm020411-19}. This in turn enables us to show, using again the properties of
the  Fourier-Wigner function
dynamics, the already mentioned
equipartition property, which is stated in Proposition \ref{equipartition} and proved in Section \ref{sec11}.

The next ingredient that is important in the hydrodynamic limit
argument is the linear bound, in the system size, for the relative
entropy of the chain, with respect to both the thermal equilibrium and local
equilibrium probability measures. We establish this bound, together
with some of its consequences,  in Section \ref{sec6} (see Proposition \ref{cor021211-19}). A crucial
property that allows us to control the entropy production, coming from
the action of the external force, is the estimate of the damping rate
of the time average of the  momentum expectation  at the
respective endpoint of the system obtained in Proposition \ref{lm010911-19a}.

As we have already mentioned the model we consider in the present work,
with the random flip of the sign of momenta,
is  more difficult to handle than the random momentum exchange one
investigated in \cite{kos2}, due to the fact that the
energy is not transported by  the
stochastic part of the dynamics. We believe that the method used in
the  present
paper can be also applied to that model. In addition, the assumption that the
forcing acting at the boundary is constant in time, is only made here to
simplify the already complicated arguments for the entropy bound
of Section \ref{sec7entropy} and  the momentum
damping estimates formulated in  Proposition  \ref{lm010911-19a} and
Lemma \ref{lm010911-19}. At the expense of increasing the volume of
the calculations, with some additional effort, one could extend
the results of the present paper to the case when the tension is a $C^1$ smooth function of
time.

  A proof of the Fourier law in the stationary state  remains an open
  problem for the random flip model.  
We hope that in the future we will also be able to extend the results of the present paper to the more
challenging case of the chain of anharmonic springs.

Finally, concerning the organization of the paper. The description of
the model and basic notation is presented in Section
\ref{sec:adiab-micr-dynam}. The formulation of the main result,
together with the auxiliary facts needed to carry out the proof are done
in Section \ref{sec3}. For a reader convenience we sketch the
structure of the main
argument in Section \ref{sec:equipartition}.
The proof of the hydrodynamic limit is carried out in Section
\ref{sec:form-deriv-equat}. It is contingent on a number of auxiliary
results that are shown throughout the remainder of the paper. Namely,
the estimates of the momentum and stretch averages are done in Section
\ref{sec66}, the energy production bounds are obtained in  Section
\ref{sec6}, while Section \ref{sec:wigner} is devoted to showing the
equipartition property. Finally, in the appendix sections we give the
proofs of quite technical estimates used throughout Section \ref{sec66}.


\section{Preliminaries}
\label{sec:adiab-micr-dynam}

\subsection{Open chain of oscillators}

For $n\ge1$  an integer we let  $\T_n := \{0,1,\dots,n\}$ and
  $\T_n^{\circ} := \{1,\dots,n-1\}$. The points $0$ and $n$ are the extremities of the chain. Let $\T := [0,1]$ be the continuous counterpart.
We suppose that the position and momentum of a harmonic oscillator at
site $x\in\T_n$ are denoted by $(q_x,p_x)\in\R^2$.
The interaction
between two particles situated at $x-1,x\in\T_n^{\circ}$ is described by the quadratic
potential energy \[V(q_x-q_{x-1}):=\tfrac12(q_x-q_{x-1})^2.\]  
At the boundaries the system
is connected to two Langevin heat baths at temperatures $T_0:=T_-$ and
$T_n:=T_+$.
 We also assume that a force (tension) of constant value
$\bar\tau_+ \in \RR$ is acting on the utmost right point $x=n$. Since the system is \emph{unpinned}, the absolute positions $q_x$ 
do not have precise meaning, and the dynamics depends only on the
interparticle {\em stretch} 
\[
r_x := q_x - q_{x-1} \quad \text{ for } x=1,\ldots,n, 
\]and by convention throughout the paper we set $r_0:=0$.
The configurations are then described by
\begin{equation}
  \label{eq:1}
  (\mathbf r, \mathbf p) = (r_1, \dots, r_n, p_0, \dots, p_n) \in \Omega_n:=\R^{n}\times\R^{n+1}. 
\end{equation}
The total energy of the chain is defined by the Hamiltonian:
\begin{equation}
\label{Hn}
\mathcal{H}_n (\mathbf r, \mathbf p):=\sum_{x\in\T_n} {\cal E}_x (\mathbf r, \mathbf p),
\end{equation}
where the microscopic energy density is given by
\begin{equation}
\label{Ex}
{\cal E}_x (\mathbf r, \mathbf p):=  \frac{p_x^2}{2}+V(r_x)= \frac{p_x^2}2 + \frac{r_{x}^2}2 ,\quad x\in \T_n .
\end{equation}
Finally, we assume that for each $x \in \T_n$ the momentum $p_x$ can be flipped, at a random
exponential time with intensity $\ga n^2$, to $-p_x$, with
$\ga>0$. 

Therefore, the microscopic dynamics of
the process $\{(\mathbf r(t), \mathbf p(t))\}_{t\ge0}$
describing the total chain is  given in the bulk by 
\begin{equation} 
\label{eq:qdynamicsbulk}
\begin{aligned}
    \dd   r_x(t) &= n^2 \left(p_x(t)- p_{x-1}(t)\right) \; \dd   t,\qquad \qquad \qquad  \qquad \qquad x\in \{1, \dots, n\},\\
    \dd   p_x(t) &= n^2 \left(r_{x+1}(t)- r_x(t)\right) \; \dd   t -  2p_x(t^-)
    \; \dd   \mathcal N_x(\gamma n^2t), \; \quad x\in\T_n^{\circ},\end{aligned} \end{equation} and at the boundaries by
     \begin{align}\label{eq:qdynamicsbound} 
r_0(t)&\equiv0,\notag\\
     \dd   p_0(t) &= n^2  r_1(t)  \; \dd   t 
     - 2p_0(t^-)
        \; \dd   \mathcal N_0(\gamma n^2t) - \tilde\gamma n^2 p_0 \; \dd   t + n\sqrt{2\tilde\gamma T_-} \dd w_0(t),  \\
 \dd   p_n(t) &= -n^2  r_n(t) \; \dd   t  + n^2 \bar\tau_+ \; \dd t - 2p_n(t^-)
    \; \dd   \mathcal N_n(\gamma n^2t) - \tilde\gamma n^2 p_n \; \dd   t + n\sqrt{2 \tilde\gamma T_+} \dd w_n(t), \notag
  \end{align}
where $w_0(t)$ and $w_n(t)$ are independent
standard Wiener processes and $\mathcal N_x(t)$, $x\in\T_n$ are
independent of them i.i.d.~Poisson processes of intensity
$1$. Besides, $\tilde\gamma>0$ regulates the intensity of the Langevin thermostats. All processes are given
over some probability space $(\Sigma,{\cal F},\bbP)$. The factor
$n^2$ appearing in the temporal scaling comes from the fact that $t$,
used in the equations above,
is the \emph{macroscopic}  time, and the \emph{microscopic} time scale is the diffusive one. 

We assume that the initial data is random,
distributed according to the  probability distribution $\mu_n$ over $\Om_n$.
We denote by $\bbP_n:=\mu_n\otimes \bbP$ (resp.~$\E_n$) the product probability distribution over $\Om_n\times\Sigma$ (resp.~its expectation).


\bigskip

Equivalently, the generator of this dynamics is given by 
\begin{equation}
  \label{eq:generator}
  L := n^2 \left(A + \gamma S + \tilde\gamma \tilde S \right),
\end{equation}
where
\begin{equation}
  \label{eq:A}
  A: = \sum_{x=1}^n (p_x - p_{x-1}) \partial_{r_x} + \sum_{x=1}^{n-1} (r_{x+1} - r_x) \partial_{p_x}
  + r_1 \partial_{p_0} + \left(\bar\tau_+(t) - r_n\right) \partial_{p_n}
\end{equation}
and
\begin{equation}
  \label{eq:S}
  S F (\mathbf{r},\mathbf{p}): = \sum_{x=0}^n
  \left(F(\mathbf{r},\mathbf{p}^x) -
    F(\mathbf{r},\mathbf{p})\right)
\end{equation}
for any $C^2$--class smooth function $F$.
Here $\mathbf{p}^x$ is
the momentum configuration obtained from $\mathbf{p}$ with $p_x$
replaced by $-p_x$.
Finally, the generator of the Langevin heat bath at the boundary points equals:
\begin{equation}
  \label{eq:tS}
  \tilde S = 
  \sum_{x=0,n} \left( T_x \partial_{p_x}^2 - p_x \partial_{p_x} \right) , \qquad \text{with} \quad T_0:=T_-,
  \quad T_n := T_+. 
\end{equation}

\subsection{Notations}
\label{sec:some-notat-elem}

We collect here notations and conventions that we use throughout the paper.

$\bullet$ Given an integrable function $G:\T \to \mathbb C$, its Fourier transform is defined by
\begin{equation}
  \label{eq:F1}
  \mathcal F G (\eta) := \int_{\T} G(u) e^{-2i\pi u \eta} \; du, \qquad \eta\in \Z.
\end{equation}
If $G\in L^2(\T)$, then the inverse Fourier transform reads as
\begin{equation}
  \label{eq:F2}
  G(u) = \sum_{\eta\in \Z} e^{2i\pi u \eta} \mathcal F G (\eta) ,\qquad u\in\T,
\end{equation}
where the sum converges in the $L^2$ sense.

 $\bullet$ Given a sequence $\{f_x,\,x \in \T_n\}$, its Fourier transform is given by
\begin{equation}
  \label{eq:F3}
  \widehat f(k) = \sum_{x\in \T_n} f_x e^{-2i\pi x k} ,\qquad k\in \widehat\T_n := \big\{0, \tfrac 1{n+1}, \dots, \tfrac {n}{n+1}\big\}.
\end{equation}
Reciprocally, for any $\hat f: \hat\T_n \to \mathbb{C}$, the inverse Fourier transform reads 
\begin{equation}
  \label{eq:F4}
  f_x = \underset{{k\in \widehat{\T}_n}}{{\hat\sum}}\hat f(k)  e^{2i\pi x k},\qquad  x\in\T_n,
\end{equation}
where we use the following short notation 
\begin{equation}
\label{012001-20}
\underset{{k\in \widehat{\T}_n}}{{\hat\sum}}:=\frac 1{n+1} \sum_{k\in \widehat\T_n}
\end{equation}
for the averaged summation over frequencies $k\in\hat \T_n$. 
The Parseval identity can be then expressed  as follows
\begin{equation}
\label{parseval}
 \underset{{k\in \widehat{\T}_n}}{{\hat\sum}} \widehat f(k) \widehat g^\star(k) =
 \sum_{x\in \T_n}f_xg_x^\star ,\qquad f,g:\T_n\to\mathbb{C}.
\end{equation}
For a given function $f$ we adopt the convention
\begin{equation}
\label{iota}
f^+(k):=f(k)\quad\mbox{and}\quad f^-(k):=f^\star(-k),\qquad
k\in\hat\T_n.
\end{equation}
According to our notation, given a configuration 
\begin{equation*}
  (\mathbf r, \mathbf p) = (r_1, \dots, r_n, p_0, \dots, p_n) \in \Omega_n:=\R^{n}\times\R^{n+1}
\end{equation*}
we let, for any $k \in\hat\T_n$,
$$
\widehat r(k):=\sum_{x\in \T_n}r_x e^{-2\pi i x k},\qquad \widehat p(k):=\sum_{x\in \T_n}p_x e^{-2\pi i x k},
$$
recalling the convention $r_0:=0$.
Since the configuration components are real valued, the corresponding Fourier transforms have the property:
\begin{equation}
  \label{eq:w1}
  {\widehat p}^{\star}(k) =  {\widehat p}(-k), \qquad  {\widehat r}^{\star}(k) =  {\widehat r}(-k).
\end{equation}

 $\bullet$ For a  function  $G: \T \to \mathbb{C}$, we define three discrete  approximations: of the function itself, of its gradient and Laplacian, respectively by
 \begin{align}
\label{disc-approx}
G_x&:=G(\tfrac x n), \qquad \; \; \quad \qquad \qquad \qquad \qquad x\in\T_n,\notag\\
(\nabla_nG)_x&:=n\big(G(\tfrac{x+1}n)-G(\tfrac x n)\big), \qquad\qquad\qquad x\in\{0,\ldots,n-1\},\\
 (\Delta_nG)_x &:=n^2\big(G(\tfrac{x+1}n)+G(\tfrac{x-1}n)-2G(\tfrac x
   n)\big),\; \; \;\;\; x\in\T_n^{\circ}.\notag
\end{align}


$\bullet$ Having two families of functions $f_i,g_i:A\to\R$, $i\in I$, where $I$,
$A$ are some sets
we write $
f_i\lesssim g_i,$ $ i\in I $
if there exists $C>0$ such that
$$
f_i(a)\le C g_i(a),\qquad \text{for any } i\in I,\,a\in A.
$$
If both $f_i\lesssim g_i, i\in I$ and $g_i\lesssim f_i, i\in
I$, then we shall write
$
f_i\approx g_i, i\in I.
$

\section{Hydrodynamic limits: statements of the main results}

\label{sec3}

In this section we state our main results, given below in Theorem \ref{hydro1}, Theorem \ref{hydro2} and Theorem \ref{main-result}. Before that,  {we state} our first assumption on the initial probability distribution of the configurations.

Suppose that $T>0$. {We first introduce} $\nu_T (\dd{\bf r},\dd{\bf p})$ {which is defined as} the product
  Gaussian measure on $\Omega_n$
 of {zero} average and variance $T>0$ given by
\begin{equation} \label{eq:nuT}
 \nu_T (\dd{\bf r},\dd{\bf p}) : =\frac{e^{-\mc E_0/T}}{\sqrt{2\pi T}} \dd p_0 \;
  \prod_{x=1}^n \frac{e^{-\mc E_x/T}}{\sqrt{2\pi T}} \dd p_x \dd r_x.
\end{equation} Let  $\mu_n(t)$ be the probability law on $\Omega_n$ of the configurations $(\mathbf{r}(t),\mathbf{p}(t))$ and let $f_n(t,\mathbf{r},\mathbf{p})$ be the density of the measure $\mu_n(t)$ with respect to $\nu_T$.   

 We define  the linear interpolation between the inverse boundary temperatures $T_-^{-1}$
and $T_+^{-1}$ by
\begin{equation}
\label{beta}
\beta(u) := \left(T_+^{-1} - T_-^{-1}\right) u +  T_-^{-1},\qquad u\in\T.
\end{equation}
 Recall the definition of its discrete approximation:
  $\beta_x := \beta(x/n)$, $x\in\T_n$. 
Let $\tilde \nu$ be the corresponding inhomogeneous product measure with tension $\bar\tau_+$:
\begin{equation}
  \label{tilde-nu}
  \tilde \nu (\dd \mathbf{r},\dd \mathbf{p}) := \frac{e^{-\beta_0 p_0^2/2}}{\sqrt{2\pi \beta_0^{-1}}} \dd p_0 \; \prod_{x=1}^n
  \exp\big\{-\beta_x\left( \mathcal E_x - \bar\tau_+ r_x\right) -
    \mathcal G(\beta_x,\bar\tau_+)\big\} \dd r_x\dd p_x,
\end{equation}
where the Gibbs potential is
\begin{equation}
  \label{eq:16}
  \mathcal G(\beta,\tau):= \log\int_{\mathbb R^2} e^{-\frac{\beta}{2} (r^2+p^2)+ \beta \tau r} \dd p \dd r = \frac 12 \beta \tau^2
  + \frac12\log \left(2\pi \beta^{-1}\right),
\end{equation}
for $ \beta>0,\,\tau\in\mathbb R. $
Consider the 
density 
\begin{equation}
\tilde f_n(t) := f_n(t) \frac { \dd \nu_{T}}{ \dd \tilde{\nu}}. \label{eq:tildef}
\end{equation}
and define
the relative entropy
\begin{equation}
  \label{eq:7}
 \tilde{\mathbf{H}}_n(t) := \int_{\Om_n} \tilde f_n(t) \log\tilde f_n(t) \dd \tilde{\nu}.
\end{equation}
In the whole paper we assume \begin{equation}
\label{eq:ass0}
\tilde f_n(0)\in C^2(\Omega_n) \qquad \text{and} \qquad \tilde{\mathbf{H}}_n(0) \lesssim n, \qquad n \geqslant 1.
\end{equation}

\subsection{Empirical distributions of the averages}

We are interested in the evolution of the \emph{microscopic profiles}
of stretch, momentum, and energy, which we now define. For any
$n\ge 1$, $t\ge0$ and $x \in \T_n$, let
\begin{equation}
\label{bar-r}
\bar r_x^{(n)}(t):=\bbE_n \big[r_x(t)\big], \qquad 
\bar p_x^{(n)}(t):=\bbE_n
\big[p_x(t)\big], 
\qquad \bar{\cal E}_x^{(n)}(t):=\bbE_n \big[{\cal E}_x(t)\big].
\end{equation}
Moreover, we denote by $\widehat{\bar r}^{(n)}(t,k)$, $\widehat{\bar
  p}^{(n)}(t,k)$, with $k\in\hat\T_n$,  the Fourier
transforms of the first two fields defined in \eqref{bar-r}.
We shall make the following hypothesis: 


\begin{assumption}\label{ass1} We assume 
\begin{enumerate} \item an energy bound on the initial data:
\begin{equation}
\label{E0}
\sup_{n\ge1}\frac{1}{n+1}\sum_{x\in \T_n}\bar{\cal E}_x^{(n)}(0)<+\infty \; ; 
\end{equation}
\item a uniform bound satisfied by the spectrum of the initial averages:
  \begin{equation}
\label{spec-bound}
\sup_{n\ge1}\Big(\sup_{k\in \hat \T_n}|\hat{\bar
      r}^{(n)}(0,k)|
+\sup_{k\in \hat \T_n}|\hat
{\bar p}^{(n)}(0,k)|\Big)<+\infty.
\end{equation}
\end{enumerate}\end{assumption}

{ The assumption \eqref{spec-bound} prevents the
  average of the initial profiles to
  concentrate at any particular mode $k$, as $n\to\infty$. This is a natural regularity
  condition on the macroscopic profiles of volume stretch and momenta. 
  }

\subsection{Convergence of the average stretch and momentum}

\label{sec-stretch}

In order to state the convergence results for the profiles, we
extend the definition \eqref{bar-r} to profiles on
$\T$, as follows: for any $u \in \T$ and $x\in\T_n$ let 
\begin{equation}\label{En0} 
\begin{cases} \bar r^{(n)}(t,u)&=\bbE_n \big[r_x(t)\big], \\
 \bar p^{(n)}(t,u)&=\bbE_n \big[p_x(t)\big],\\
\bar{\cal E}^{(n)}(t,u)&=\bbE_n \big[{\cal E}_x(t)\big], \end{cases} \qquad \mbox{ if } u\in \big[ \tfrac x{n+1}, \tfrac{x+1}{n+1} \big). 
\end{equation} 
Let $r(t,u)$ be the solution of the following  partial differential equation
    \begin{align}
        \partial_t
        r(t,u)&=\frac{1}{2\gamma}\partial^2_{uu}r(t,u), \qquad  (t,u)\in \R_+\times \T,\label{eq:linear} 
      \end{align}
with the  boundary and initial conditions:
\begin{equation}
  \label{eq:bc0}
  \begin{split}
   & r(t,0) = 0
   , \qquad  r(t,1) = \bar\tau_+,\\
&r(0,u)=r_0(u)
  \end{split}
\end{equation} for any $(t,u)\in \R_+\times \T$.
To guarantee the {existence and uniqueness of a $C^2$--}regular solution of the above problem we
assume that 
\begin{equation}
\label{reg}
r_0\in C^2(\T)\quad\mbox{ and }\quad r_0(1)=\bar \tau_+.
\end{equation}
Let $p_0 \in C(\T)$ be an initial momentum profile. Our first result can be formulated as follows.
\begin{theorem}[Convergence of the stretch and momentum profiles]
\label{hydro1}
Assume that the initial distribution of the
  stretch and momentum weakly converges to $r_0(\cdot),p_0(\cdot)$ introduced above,
  i.e.~for any test function $G\in C^\infty(\T)$ we have
\begin{equation}
\label{weak-r0-p0} \begin{aligned}
&\lim_{n\to+\infty}\frac{1}{n+1}\sum_{x\in \T_n}\bar
r_x^{(n)}(0)G_x=\int_{\T}r_0(u)G(u)\dd u,\\
& \lim_{n\to+\infty}\frac{1}{n+1}\sum_{x\in \T_n}\bar
p_x^{(n)}(0)G_x=\int_{\T}p_0(u)G(u)\dd u.
\end{aligned} \end{equation}
Then, under Assumption \ref{ass1},
for any $t>0$ the following holds:
\begin{equation}
\label{hydro-conv}
\lim_{n\to+\infty}\bar r^{(n)}(t,\cdot)=r(t,\cdot)
\end{equation}
weakly in $L^2(\T)$, where $r(\cdot)$ is the solution of
\eqref{eq:linear}--\eqref{eq:bc0}. In addition,
we have
\begin{equation}
\label{hydro-conv1}
\lim_{n\to+\infty}\int_0^t\big\|\bar p^{(n)}(s,\cdot)\big\|^2_{L^2(\T)}\dd s=0.
\end{equation}
\end{theorem}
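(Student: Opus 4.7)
Taking expectations in \eqref{eq:qdynamicsbulk}--\eqref{eq:qdynamicsbound}, the Wiener and Poisson noises vanish in mean and the averaged fields $(\bar r^{(n)},\bar p^{(n)})$ satisfy a closed, deterministic, linear system of ODEs: in the bulk $\partial_t\bar r_x = n^2(\bar p_x-\bar p_{x-1})$ and $\partial_t\bar p_x = n^2(\bar r_{x+1}-\bar r_x)-2\gamma n^2\bar p_x$, with additional Langevin damping $-\tilde\gamma n^2\bar p_{0,n}$ and the constant forcing $n^2\bar\tau_+$ at the right endpoint. Theorem \ref{hydro1} thus reduces entirely to the diffusive limit of this damped-wave system, and no stochastic analysis is needed at this stage.

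To prove \eqref{hydro-conv1} and a uniform $L^2$-bound on $\bar r^{(n)}$ at once, I would introduce the Lyapunov functional
\[
\mathcal K_n(t) := \frac{1}{2(n+1)}\sum_{x=0}^n \bar p_x(t)^2 + \frac{1}{2(n+1)}\sum_{x=1}^n \bigl(\bar r_x(t)-\bar\tau_+ x/n\bigr)^2,
\]
which centres the stretch around the stationary profile $\bar\tau_+ u$ of the limiting heat equation. A summation-by-parts computation makes the conservative Hamiltonian contributions cancel, leaving a dissipative term $-2\gamma n^2\|\bar p^{(n)}\|_{L^2(\T)}^2 - (\tilde\gamma n^2/(n+1))(\bar p_0^2+\bar p_n^2)$ and a bulk forcing term of the form $(n\bar\tau_+/(n+1))\sum_{x=0}^{n-1}\bar p_x$. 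The key point is that Cauchy--Schwarz and Young exactly absorb this forcing into the bulk dissipation, producing the closed estimate $\mathcal K_n(t) + \gamma n^2\int_0^t\|\bar p^{(n)}(s)\|_{L^2(\T)}^2 ds \le C(1+t)$, uniformly in $n$. Assumption~\ref{ass1} combined with Jensen's inequality gives $\mathcal K_n(0)=O(1)$, whence $\int_0^t\|\bar p^{(n)}\|_{L^2(\T)}^2 ds = O(1/n^2)\to 0$, which is precisely \eqref{hydro-conv1}, together with the a priori bound $\|\bar r^{(n)}(t,\cdot)\|_{L^2(\T)}\lesssim\sqrt{1+t}$.

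For \eqref{hydro-conv}, weak compactness extracts a subsequence $\bar r^{(n_k)}\rightharpoonup r$ in $L^\infty([0,T];L^2(\T))$. Differentiating the first ODE in time and substituting the second yields the microscopic damped-wave equation
\[
\frac{1}{2\gamma n^2}\partial_{tt}\bar r^{(n)}_x + \partial_t\bar r^{(n)}_x = \frac{1}{2\gamma}(\Delta_n\bar r^{(n)})_x, \qquad x\in\T_n^\circ.
\]
Testing against $\varphi\in C^2(\bar\T)$ with $\varphi(0)=\varphi(1)=0$, integrating twice by parts in space, and using the energy estimate to discard the $(1/n^2)$-term (which becomes $O(\|\bar p\|_{L^2(\T)}/n)$ after a further integration by parts in time), the limit satisfies the weak form of $\partial_t r = (1/(2\gamma))\partial_u^2 r$ on $(0,1)$, with $\Delta_n\varphi\to\varphi''$ uniformly.

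The main obstacle I anticipate is the identification of the Dirichlet boundary data. The left condition $r(t,0)=0$ follows immediately from the microscopic convention $\bar r_0^{(n)}\equiv 0$. For $r(t,1)=\bar\tau_+$, I would exploit the boundary ODE $\partial_t\bar p_n = -n^2(\bar r_n-\bar\tau_+) - (2\gamma+\tilde\gamma)n^2\bar p_n$: testing against $\chi\in C_c^\infty((0,T))$, integrating by parts in time, and using the extra boundary dissipation $\int_0^t\bar p_n^2\,ds = O(1/n)$ extracted from the Lyapunov inequality, one obtains $\int_0^T \chi(s)(\bar r_n^{(n)}(s)-\bar\tau_+)\,ds \to 0$, which identifies the limiting boundary trace. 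Combined with the interior PDE and the initial condition from \eqref{weak-r0-p0}, uniqueness for the heat equation with these mixed Dirichlet data characterises $r$ uniquely, and subsequential convergence upgrades to full convergence \eqref{hydro-conv}.
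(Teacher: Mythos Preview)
Your argument is correct and in fact more elementary than the paper's route. The paper does not centre the Lyapunov functional: it works with the raw quantity $\Xi_n=\sum\bar p_x^2+\sum\bar r_x^2$, whose time derivative contains the boundary forcing $n^2\bar\tau_+\bar p_n$. That term cannot be absorbed by Young's inequality (it is $O(n^2)$ against $O(n^2)$ dissipation with no room to spare), so the paper instead proves the sharp estimate $\bigl|\int_0^t\bar p_n(s)\,ds\bigr|\lesssim 1/n$ (Proposition~\ref{lm010911-19a}) via an explicit Fourier--Laplace analysis of the boundary system, using the spectral hypothesis~\eqref{spec-bound}. Your subtraction of the linear profile $\bar\tau_+ x/n$ converts the forcing into a bulk term of size $O(n)$, which Young's inequality does absorb into the $O(n^2)$ dissipation; this yields the same conclusions $\int_0^t\|\bar p^{(n)}\|_{L^2}^2\,ds=O(n^{-2})$ and $\int_0^t\bar p_n^2\,ds=O(n^{-1})$ without Fourier analysis and without Assumption~\ref{ass1}(2). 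For the identification of the limit, both approaches coincide: the summation-by-parts residue produces $-(2\gamma)^{-1}(\nabla_n G)_{n-1}\int_0^t\bar r_n\,ds$, and the boundary ODE combined with $\int_0^t\bar p_n^2\,ds=O(n^{-1})$ and $|\bar p_n(t)|\lesssim\sqrt n$ gives $\int_0^t(\bar r_n-\bar\tau_+)\,ds\to0$, recovering the weak formulation~\eqref{eq:wlinear}. One point of phrasing: your ``identification of the boundary trace'' is not a statement about $r(\cdot,1)$ directly (the weak $L^2$ limit has no a priori trace), but rather the replacement of the discrete boundary residue by $G'(1)\bar\tau_+ t$ in the weak formulation --- which is exactly what is needed. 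The paper develops Proposition~\ref{lm010911-19a} because the sharp $O(1/n)$ bound on $\int_0^t\bar p_n$ is indispensable later for the entropy production estimate (Section~\ref{sec7entropy}), but for Theorem~\ref{hydro1} alone your route is cleaner.
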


The proof of this theorem is given in Section \ref{sec11.2.1}.

It is not difficult to prove (see Section \ref{sec:equipartition}
below) that, under the same assumptions as in Theorem \ref{hydro1}, for
each $t>0$ the sequence of the squares of the mean stretches
$\{[\bar r^{(n)}]^2 (\cdot)\}_{n\ge1}$ -- the
\emph{mechanical energy} density -- is sequentially $\star-$weakly
compact in $\left(L^1([0,t];C(\T))\right)^\star$. 
However, in order to characterize its convergence
one needs substantial extra work, and this is why we state it as an additional important  result.


Let $C_{0}^2(\T)$ be  the class of
$C^2$ functions  on $\T$ such that $G(0) = G(1) =
0$.

\begin{theorem}[Convergence of the mechanical energy profile]
\label{hydro2}
Assume that  Assumption \ref{ass1} holds. 
Then, for any  test function $G\in
L^1([0,t];C_{0}^2(\T))$ we have
\begin{equation}
\label{060401-20x}
\lim_{n\to+\infty}\int_0^{t}\dd s\int_\T\big(\bar
r^{(n)} (s,u)\big)^2G(s,u)\dd u= \int_0^{t}\dd s\int_\T
r^2 (s,u)G(s,u)\dd u,
\end{equation}
where $r(s,u)$ is the solution of \eqref{eq:linear}--\eqref{eq:bc0}.
\end{theorem}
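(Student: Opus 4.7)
The plan is to upgrade the weak $L^2(\T)$ convergence of $\bar r^{(n)}(t,\cdot)$ provided by Theorem \ref{hydro1} to strong convergence in $L^2([0,t] \times \T)$. Since $\bar r^{(n)}$ is uniformly bounded in $L^\infty([0,t]; L^2(\T))$ (from Assumption \ref{ass1} and a standard energy estimate), such strong convergence implies $(\bar r^{(n)})^2 \to r^2$ in $L^1([0,t] \times \T)$ by Cauchy--Schwarz, which is more than enough to pass to the limit against $G \in L^1([0,t]; C_0^2(\T)) \subset L^1([0,t]; L^\infty(\T))$. By the standard Hilbert-space criterion, strong convergence will follow from the weak convergence already known together with convergence of squared norms:
\begin{equation*}
\lim_{n\to\infty}\int_0^t \|\bar r^{(n)}(s,\cdot)\|_{L^2(\T)}^2 \dd s = \int_0^t \|r(s,\cdot)\|_{L^2(\T)}^2 \dd s.
\end{equation*}

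To establish this I would derive and compare two energy identities. Taking expectations in \eqref{eq:qdynamicsbulk}--\eqref{eq:qdynamicsbound} yields a closed linear system for $(\bar r_x^{(n)}, \bar p_x^{(n)})$, in which a summation-by-parts shows that the Hamiltonian drift cancels, leaving
\begin{equation*}
\frac{d}{dt}\frac{1}{2(n+1)} \sum_{x \in \T_n}\!\!\big[(\bar r_x^{(n)})^2 + (\bar p_x^{(n)})^2\big] = \frac{n^2 \bar\tau_+ \bar p_n^{(n)}}{n+1} - \frac{2\gamma n^2}{n+1}\|\bar p^{(n)}\|_{\ell^2}^2 - \frac{\tilde\gamma n^2}{n+1}\big[(\bar p_0^{(n)})^2 + (\bar p_n^{(n)})^2\big].
\end{equation*}
The analogous identity for the continuous solution of \eqref{eq:linear}--\eqref{eq:bc0}, obtained by multiplying the heat equation by $r$ and integrating by parts in $u$, reads
\begin{equation*}
\frac{d}{dt}\tfrac{1}{2}\|r(t,\cdot)\|_{L^2(\T)}^2 = \frac{\bar\tau_+ \partial_u r(t,1)}{2\gamma} - \frac{1}{2\gamma}\|\partial_u r(t,\cdot)\|_{L^2(\T)}^2.
\end{equation*}
Integrating both identities in time and matching them term by term as $n\to\infty$ should produce the required norm convergence, since Theorem \ref{hydro1} already ensures $\int_0^t \|\bar p^{(n)}(s,\cdot)\|_{L^2(\T)}^2 \dd s \to 0$ so the kinetic contribution to the left-hand side drops out.

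The hard part will be the rigorous identification of the limits on the right-hand side. The heuristic slaving relation $\bar p_x^{(n)} \approx (2\gamma n)^{-1} \nabla_n \bar r^{(n)}$, together with the expected boundary law $\frac{n^2}{n+1}\bar p_n^{(n)}(s) \to \frac{1}{2\gamma}\partial_u r(s,1)$, must be made quantitative in a time-integrated sense. Only the weak statement $\int_0^t \|\bar p^{(n)}\|^2 \dd s \to 0$ from Theorem \ref{hydro1} is at hand, whereas one needs the sharper $\frac{2\gamma n^2}{n+1}\|\bar p^{(n)}\|_{\ell^2}^2 \to \frac{1}{2\gamma}\|\partial_u r\|^2_{L^2(\T)}$. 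Extracting this next-order asymptotic requires the refined momentum damping estimates of Proposition \ref{lm010911-19a} and Lemma \ref{lm010911-19}, combined with a two-scale expansion that is most transparent against test functions supported away from $u=0,1$ -- which is precisely the role of the hypothesis $G \in C_0^2(\T)$, suppressing the boundary-layer contributions arising from the thermostats and the tension $\bar\tau_+$ acting microscopically at the right endpoint.
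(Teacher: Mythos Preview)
Your reduction to strong $L^2([0,t]\times\T)$ convergence via matching energy identities is a natural idea, but it contains a genuine gap that you yourself flag as ``the hard part'' without resolving. The discrete energy balance forces you to identify, as $n\to\infty$, both the boundary work $\frac{n^2\bar\tau_+}{n+1}\int_0^t \bar p_n^{(n)}(s)\,\dd s$ and the dissipation $\frac{2\gamma n^2}{n+1}\int_0^t\|\bar p^{(n)}(s)\|_{\ell^2}^2\,\dd s$. Proposition~\ref{lm010911-19a} and Lemma~\ref{lm010911-19} give only that these quantities are $O(1)$; neither yields a limit. Worse, your slaving heuristic $n\bar p_x^{(n)}\approx \frac{1}{2\gamma}\nabla_n\bar r^{(n)}$ shows that identifying the limit of the dissipation term is equivalent to proving $\frac{1}{n+1}\sum_x|\nabla_n\bar r_x^{(n)}|^2\to\|\partial_u r\|_{L^2}^2$, i.e.\ strong $H^1$ convergence of $\bar r^{(n)}$. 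That is strictly stronger than the strong $L^2$ convergence you are after, so the argument is circular.

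The paper avoids this entirely. It first proves (Lemma~\ref{lm010301-20}) only the \emph{bound} $n\sum_x\int_0^t(\bar r_{x+1}^{(n)}-\bar r_x^{(n)})^2\,\dd s\lesssim 1$, which is exactly what your energy identity gives once you drop the sign-definite terms and use Proposition~\ref{lm010911-19a} on the boundary work; no limit identification is needed. This bound places the piecewise-linear interpolant $\bar r^{(n)}_{\rm int}$ in $L^2([0,t];H^1(\T))$ uniformly, and the compact embedding $H^1(\T)\hookrightarrow C(\T)$ then upgrades weak $L^2$ convergence to uniform convergence of time averages (Lemma~\ref{lm020401-20}). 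The square is finally handled by a time-discretization argument: one writes $\int_0^{t_*}(\bar r^{(n)})^2 G\approx\sum_\ell \bar r^{(n)}(t_\ell)\int_{t_\ell}^{t_{\ell+1}}\bar r^{(n)}(s)\,\dd s\,G$ and controls the commutator using the evolution equation and the $H^1$/momentum bounds. The compactness step is precisely what replaces the missing norm convergence in your scheme.
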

The proof of this theorem is contained in Section \ref{sec13.3.3}.

\subsection{Convergence of the  energy density  average}
\label{sec:macr-equat}
Our last result concerns the microscopic energy profile. 
 To obtain the  convergence of $\bar{\cal
  E}_x^{(n)}(t)$ for $t>0$, we add an assumption on the
\emph{fluctuating part} of the initial data distribution.
For any $x \in \T_n$, let
\begin{equation}
\label{fluct}
\begin{aligned} \tilde r_x^{(n)}(t) &:= r_x(t)-\bar r_x^{(n)}(t),\\  \tilde p_x^{(n)}(t) &:= p_x(t)-\bar p_x^{(n)}(t).\end{aligned}
\end{equation}
Similarly as before, let  $\widehat{\tilde
  r}^{(n)}(t,k)$, $\widehat{\tilde p}^{(n)}(t,k)$ be the Fourier
transforms of the fields defined in \eqref{fluct}.
We shall assume the following hypothesis on the covariance
of the stretch and momentum fluctuations.
\begin{assumption} \label{ass3}The following correlations sums are finite:
\begin{equation}
\label{2-mfluct}
\begin{split} & \sup_{n\ge 1}\frac 1{n+1} 
\sum_{x,x'\in\T_n}\left(\mathbb E_n\left[ \tilde p_x^{(n)}(0)
  \tilde p_{x'}^{(n)}(0)\right] \right)^2   <+\infty\\
  &
\sup_{n\ge 1}\frac 1{n+1} 
\sum_{x,x'\in\T_n}\left(\mathbb E_n\left[ \tilde r_x^{(n)}(0)
  \tilde r_{x'}^{(n)}(0)\right] \right)^2  <+\infty \\
  & \sup_{n\ge 1}\frac 1{n+1} 
\sum_{x,x'\in\T_n} \left(\mathbb E_n\left[ \tilde p_x^{(n)}(0)
  \tilde r_{x'}^{(n)}(0)\right] \right)^2 <+\infty.
\end{split} \end{equation}
\end{assumption}

{ Assumption \ref{ass3} constitutes a rather weak hypothesis about
  the spatial
  decorrelation for the initial distribution of the stretch and
  momentum fields. Under this assumption
  the convergence in probability for the local (weak)
  law of large numbers  for the initial profiles is guaranteed.
  For example under the local Gibbs measures 
\begin{equation*}
  \prod_{x=1}^n
  \exp\bigg\{-\frac{\beta_x}{2}\left( (p_x - \bar p_x^{(n)}(0))^2
    +(r_x - \bar r_x^{(n)}(0))^2\right) - \log(2\pi\beta_x) \bigg\} \dd r_x\dd p_x,
\end{equation*}
each of the fields $\big\{\tilde p_x^{(n)}(0)\big\}_{x\in\bbZ}$, $\big\{\tilde
r_x^{(n)}(0)\big\}_{x\in\bbZ}$  decorrelates for any $x\neq x'$ and the
first two conditions of
\eqref{2-mfluct} are trivially satisfied, with the bound provided by
$\inf_x\beta_x^{-1}$. The third one also holds, as then
$\bbE_x\big[\tilde p_x^{(n)}(0) \tilde r_{x'}^{(n)}(0)\big]=0$ for
all $x,x'$.
}

Let $e(t,u)$ be the solution of the initial-boundary value problem for
the inhomogeneus heat equation
    \begin{equation}
 \partial_t e (t,u) =  \frac 1{4\ga}  \partial^2_{uu}
   \left\{  e(t,u) +
       \frac 12 r^2(t,u)\right\},\qquad
        (t,u)\in \R_+\times \T ,\label{eq:linear2}\end{equation}
        with the boundary and initial conditions \begin{equation}  \label{eq:bc2}\begin{split}
&e (t,0) = T_-, \qquad \ e(t,1) = T_+ + \frac12\bar\tau_+^2,\\
 & e(0,u)= e_0(u), 
\end{split} \end{equation}  for any $(t,u)\in \R_+\times \T$.
Here $r(t,u)$ is the solution of  \eqref{eq:linear}--\eqref{eq:bc0}, and $e_0$ is non-negative.
Our principal result concerning the convergence of the energy
functional is contained in the following {theorem}:
\begin{theorem}[Convergence of the total energy profile]
\label{main-result}
Assume  that the initial distribution of the energy converges weakly to some $e_0 \in C(\T)$, \emph{i.e.}~for any $G\in C^\infty(\T)$ we have:
\begin{equation}
\label{weak-E0}
\lim_{n\to+\infty}\frac{1}{n+1}\sum_{x\in \T_n}\bar{\cal
  E}_x^{(n)}(0)G_x=\int_{\T}e_0(u)G(u)\dd u.
\end{equation}
Then, under Assumptions \ref{ass1}   and  \ref{ass3},
for any $t>0$ and  any test function $G\in
L^1([0,t]; C_{0}^2(\T))$ we have
\begin{equation}
\label{060401-20y}
\lim_{n\to+\infty}\int_0^{t}\dd s\int_\T\bar{\cal 
E}^{(n)} (s,u)G(s,u)\dd u= \int_0^{t}\dd s\int_\T
e (s,u)G(s,u)\dd u,\quad 
\end{equation}
where $e(\cdot)$ is the solution of \eqref{eq:linear2}--\eqref{eq:bc2}.
\end{theorem}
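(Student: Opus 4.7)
The strategy is to decompose the microscopic energy average into a drift and a fluctuation contribution:
\begin{equation*}
\bar{\mathcal E}_x^{(n)}(t)=\tfrac12\bigl[\bar r_x^{(n)}(t)\bigr]^2+\tfrac12\bigl[\bar p_x^{(n)}(t)\bigr]^2+T_x^{(n)}(t),\qquad T_x^{(n)}(t):=\tfrac12\bbE_n\bigl[\tilde r_x^{(n)}(t)^2\bigr]+\tfrac12\bbE_n\bigl[\tilde p_x^{(n)}(t)^2\bigr].
\end{equation*}
The mechanical (drift) part is already under control: Theorem~\ref{hydro2} identifies the limit of $\tfrac12[\bar r_x^{(n)}]^2$, after integration against $G\in L^1([0,t];C_0^2(\T))$, as $\int_0^t\!\!\int_\T\tfrac12 r^2(s,u)G(s,u)\,\dd u\,\dd s$, while \eqref{hydro-conv1} forces $\tfrac12[\bar p_x^{(n)}]^2$ to vanish in the same sense. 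The whole task then reduces to identifying the limit of the thermal profile $T_x^{(n)}$ and showing that it equals $T(s,u):=e(s,u)-\tfrac12 r^2(s,u)$.

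For the thermal part I would derive the evolution of the centered covariances $\mathcal C^{rr}_{x,y}:=\bbE_n[\tilde r_x\tilde r_y]$, $\mathcal C^{pp}_{x,y}$, $\mathcal C^{rp}_{x,y}$ by applying $L$ to the quadratic polynomials $r_xr_y$, $p_xp_y$, $r_xp_y$ and subtracting the evolution of the corresponding products of means (the latter governed by the linear system of Theorem~\ref{hydro1}). The flip generator $\gamma n^2 S$ introduces dissipation $-4\gamma n^2\mathcal C^{rp}_{x,y}$ and $-4\gamma n^2\mathcal C^{pp}_{x,y}$ for $x\neq y$, while leaving the diagonal $\mathcal C^{pp}_{x,x}$ invariant; by a quasi-stationary elimination of the cross covariances, combined with the Hamiltonian gradient coupling produced by $A$, one extracts an effective diffusion $\tfrac{1}{4\gamma}\partial^2_{uu}$ on the diagonal $T_x^{(n)}$. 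Summing against $G\in L^1([0,t];C_0^2(\T))$ and integrating in time yields a discrete weak formulation. Using the equipartition property (Proposition~\ref{equipartition}) to identify $\mathcal C^{pp}_{x,x}$ with $\mathcal C^{rr}_{x,x}$ in the limit, and the logarithmic Fourier--Wigner bound (Proposition~\ref{thm020411-19}) together with the entropy bound (Proposition~\ref{cor021211-19}) to absorb off-diagonal covariance contributions, the weak formulation collapses to the weak form of the heat equation $\partial_t T=\tfrac{1}{4\gamma}\partial^2_{uu} T+\tfrac{1}{2\gamma}(\partial_u r)^2$.

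The boundary values $T(t,0)=T_-$, $T(t,1)=T_+$ are produced by the Langevin generator $\tilde\gamma\tilde S$, whose action at $x=0,n$ forces $\bbE_n[p_x^2]\to T_{\mp}$ at the respective endpoints; the additional $\tfrac12\bar\tau_+^2$ appearing in the right Dirichlet datum for $e$ in \eqref{eq:bc2} comes from $[\bar r_n^{(n)}]^2\to\bar\tau_+^2$, consistent with \eqref{eq:bc0} and the regularity hypothesis \eqref{reg}. Setting $e:=T+\tfrac12 r^2$ and using $\partial_t r=\tfrac{1}{2\gamma}\partial^2_{uu} r$ from Theorem~\ref{hydro1}, one verifies by direct computation (using $\partial^2_{uu}r^2=2r\partial^2_{uu}r+2(\partial_u r)^2$) that $e$ solves \eqref{eq:linear2}; uniqueness of weak solutions of the linear parabolic Cauchy--Dirichlet problem \eqref{eq:linear2}--\eqref{eq:bc2} then identifies the limit uniquely as $e(t,u)$.

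In my view the main obstacle is the source identification step: rigorously establishing that the time-integrated bulk sums of the microscopic fluctuation currents $\bbE_n[\tilde p_x\tilde r_{x+1}]$ deliver the macroscopic source $\tfrac{1}{2\gamma}(\partial_u r)^2$, rather than being absorbed into the diffusion or generating spurious terms. This forces the momentum damping estimate (Proposition~\ref{lm010911-19a}), the mechanical energy damping (Lemma~\ref{lm010911-19}), the equipartition, and the entropy bound to be combined delicately so as to extract the correct coupling between the linearly evolving mean stretch profile $r(s,u)$ and the thermal profile $T(s,u)$. A secondary difficulty is the boundary layer analysis at $x=0,n$, where the thermostat and the external tension act with prefactor $n^2$ on a single site; matching these singular boundary contributions to the Dirichlet data $T_-$, $T_++\tfrac12\bar\tau_+^2$ requires the entropy production estimates of Section~\ref{sec6}.
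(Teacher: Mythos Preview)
Your decomposition $\bar{\mathcal E}_x^{(n)}=\tfrac12[\bar r_x^{(n)}]^2+\tfrac12[\bar p_x^{(n)}]^2+T_x^{(n)}$ and the plan to derive a separate heat equation for the thermal profile $T$ is a genuinely different route from the paper's, and it contains a gap at exactly the point you flag as the main obstacle.

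The paper never splits into mechanical plus thermal. It works with the full energy current $j_{x,x+1}=-p_xr_{x+1}$ and the fluctuation--dissipation relation $j_{x,x+1}=n^{-2}Lg_x-(V_{x+1}-V_x)$ with $V_x=\tfrac1{4\gamma}(r_x^2+p_xp_{x-1})$. After summation by parts the surviving bulk term is $\tfrac1{4\gamma}\sum_x(\Delta_nG)_x\,\bbE_n[r_x^2]$, and the key algebraic identity is $r_x^2=\mathcal E_x+\tfrac12(r_x^2-p_x^2)$. The $\mathcal E_x$ piece yields $\tfrac1{4\gamma}\partial_{uu}^2 e$; the mean part $[\bar r_x]^2-[\bar p_x]^2$ of the second piece yields $\tfrac1{8\gamma}\partial_{uu}^2 r^2$ directly from Theorem~\ref{hydro2} (with $[\bar p_x]^2$ killed by Lemma~\ref{lm010911-19}); and the fluctuating remainder $\bbE_n\bigl[(\tilde r_x)^2-(\tilde p_x)^2\bigr]$ vanishes by Proposition~\ref{equipartition}. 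The inhomogeneity $\tfrac12 r^2$ thus appears already \emph{inside the Laplacian}, and $(\partial_u r)^2$ never has to be identified as a limit.

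In your scheme, a direct computation from \eqref{eq:qdynamics-bar}--\eqref{eq:pdyn-bar} gives
\[
\partial_t T_x^{(n)}=n^2\bigl(\tilde j_{x-1,x}-\tilde j_{x,x+1}\bigr)+2\gamma n^2\bigl[\bar p_x^{(n)}\bigr]^2,\qquad \tilde j_{x,x+1}:=-\bbE_n[\tilde p_x\tilde r_{x+1}],
\]
so the source is $2\gamma n^2[\bar p_x^{(n)}]^2$, the mechanical energy dissipated by the flip noise --- not the fluctuation current $\bbE_n[\tilde p_x\tilde r_{x+1}]$, which carries the thermal diffusion. To obtain $\tfrac1{2\gamma}(\partial_u r)^2$ you would need, after testing and time integration,
\[
\frac{2\gamma n}{n+1}\sum_x G_x\int_0^t\bigl[n\bar p_x^{(n)}(s)\bigr]^2\dd s\;\longrightarrow\;\frac1{2\gamma}\int_0^t\!\!\int_\T G(s,u)\,(\partial_u r)^2(s,u)\,\dd u\,\dd s.
\]
Via $2\gamma\bar p_x=(\bar r_{x+1}-\bar r_x)-n^{-2}\partial_t\bar p_x$ this amounts to strong $L^2\bigl([0,t];L^2(\T)\bigr)$ convergence of the discrete gradient $n(\bar r_{x+1}^{(n)}-\bar r_x^{(n)})$ to $\partial_u r$. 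The estimates actually available --- Proposition~\ref{cor013112-19z} and Lemma~\ref{lm010301-20} --- give only the bound $n\sum_x\int_0^t(\bar r_{x+1}-\bar r_x)^2\,\dd s\lesssim 1$, hence weak compactness of the gradients, which is not enough to pass to the limit in a quadratic functional. Closing this gap would require an additional compactness argument that the paper's approach makes unnecessary: by keeping $e$ as the primary unknown, only the convergence of $[\bar r_x^{(n)}]^2$ itself (Theorem~\ref{hydro2}) is needed, and that is one derivative weaker.
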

The proof of this theorem is presented in Section \ref{sec13.2.2}.

\section{Sketches of proof and Equipartition of energy}

In this section we present some essential intermediate results which
will be used to prove the convergence theorems, and which are
consequences of the various assumptions made. We have decided to expose them in an independent section in order to emphasize the main steps of the proofs, and to highlight the role of our hypotheses.

\label{sec:equipartition}

\subsection{Consequences of Assumption \ref{ass1}}

\subsubsection{The boundary terms}
An important feature of our model is the presence of $\bar \tau_+ \neq 0$. A significant part of the work consists in estimating boundary terms. We first state in this section the crucial bounds that we are able to get, under Assumption \ref{ass1}, and which concern the extremity points $x=0$ and $x=n$. One of the most important result is the following:
\begin{proposition}
\label{prop012812-19}
Under Assumption \ref{ass1}, for any $t>0$ we have
\begin{align}
\label{022812-19a}
\int_0^t\big|\bar p_{0}(s)-\bar p_{n}(s)\big|^2\dd s \lesssim \frac{1}{n^2},\qquad n\ge1,
\end{align}
and
\begin{align}
\label{022812-19b}
\int_0^t\big|\bar p_{0}(s)+\bar p_{n}(s)\big|^2\dd s \lesssim  \frac{\log
  (n+1)}{n^2},\qquad n\ge1.
\end{align}
\end{proposition}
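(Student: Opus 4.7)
The plan is to work directly with the linear ODE satisfied by the averages $(\bar r_x(t),\bar p_x(t))$, obtained by taking expectations in \eqref{eq:qdynamicsbulk}--\eqref{eq:qdynamicsbound}: both the Poisson flips and the Wiener increments have zero conditional drift, leaving the closed deterministic system
\[\partial_t\bar r_x = n^2(\bar p_x-\bar p_{x-1}),\qquad \partial_t\bar p_x = n^2(\bar r_{x+1}-\bar r_x) - 2\gamma n^2\bar p_x\]
in the bulk, together with an extra Langevin damping $-\tilde\gamma n^2\bar p_x$ at $x\in\{0,n\}$ and the constant forcing $n^2\bar\tau_+$ at $x=n$.

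First I would solve for the stationary profile. A short computation gives $\bar p^*_x\equiv p^* = \bar\tau_+/[2\tilde\gamma+2\gamma(n+1)]=O(1/n)$, constant in $x$, and $\bar r^*_x$ linear in $x$ interpolating between $0$ and a value close to $\bar\tau_+$. Setting $v_x:=\bar r_x-\bar r^*_x$ and $q_x:=\bar p_x-p^*$, the pair $(v,q)$ satisfies the homogeneous version of the system; the constraint $v_0=0$ together with the virtual convention $v_{n+1}:=0$ makes the boundary equations at $x\in\{0,n\}$ formally bulk-like, with an additional $-\tilde\gamma n^2 q_x$ damping. Since $p^*$ is $x$-independent,
\[d(t) = q_0(t) - q_n(t),\qquad s(t) = 2p^* + q_0(t) + q_n(t),\]
and the constant offset $2p^*=O(1/n)$ contributes at most $O(t/n^2)$ to $\int_0^t s^2\,\dd s$, absorbable into \eqref{022812-19b}.

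I would then diagonalize via discrete sine/cosine transforms: writing $v_x = \sum_{k=1}^n \hat v_k\,\sin(\pi kx/(n+1))$ and $q_x=\sum_{k=0}^n \hat q_k\,\psi_k(x)$ with $\psi_k(x):=\cos(\pi k(2x+1)/(2(n+1)))$, the bulk conservative plus bulk-dissipation dynamics splits into independent $2\times 2$ systems with eigenfrequency $\omega_k=2\sin(\pi k/(2(n+1)))$, slow rate $\mu_k\sim k^2/\gamma$ and fast rate $\nu_k\sim\gamma n^2$; the boundary $\tilde\gamma$-term becomes a non-diagonal perturbation. The unperturbed mode identity $\frac{d}{dt}(\hat v_k^2+\hat q_k^2)=-4\gamma n^2\hat q_k^2$ gives $\int_0^t \hat q_k^2\,\dd s\lesssim 1/n^2$ for each $k$, and Assumption~\ref{ass1} provides uniform-in-$k$ bounds $|\hat v_k(0)|,|\hat q_k(0)|\lesssim 1$. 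The crucial parity identity $\psi_k(n)=(-1)^k\psi_k(0)$ implies
\[\psi_k(0)-\psi_k(n)=2\psi_k(0)\Id_{k\text{ odd}},\qquad \psi_k(0)+\psi_k(n)=2\psi_k(0)\Id_{k\text{ even}},\]
so $d(t)$ is supported on \emph{odd} values of $k$ in the $\psi_k$-expansion of $q$, while $s(t)-2p^*$ is supported on \emph{even} values.

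Finally, I would apply Duhamel (treating the $\tilde\gamma$-perturbation via a bootstrap from the a priori mode bound) and sum over modes. For $d(t)$, the fast-mode contribution is essentially $d(0)\,e^{-2\gamma n^2 t}$, with $L^2_t$-norm bounded by $1/n^2$; the slow-mode contribution carries the extra $\omega_k\sim k/n$ suppression, and its summation over odd $k$ with the Gaussian-tail damping $e^{-\mu_k t}$ gives another $O(1/n^2)$, yielding \eqref{022812-19a}. For $s(t)-2p^*$ the same scheme is applied to even $k$, but the $k=0$ ``zero-mode'' of $q$ -- for which $\omega_0=0$ -- is now present and is damped only through the non-diagonal $\tilde\gamma$-coupling to the other even modes; its Duhamel expansion produces a harmonic sum $\sum_{1\leqslant k\leqslant n}1/k\lesssim\log(n+1)$, giving the factor $\log(n+1)$ in \eqref{022812-19b}. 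The main technical difficulty is precisely this control of the low-frequency non-diagonal perturbation, where the naive per-mode Parseval bound is off by a factor of $n$ and must be sharpened using the parity cancellations and the explicit spectral structure -- this is where the damping estimates of Lemma~\ref{lm010911-19} and Proposition~\ref{lm010911-19a} enter as crucial inputs.
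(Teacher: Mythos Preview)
Your overall strategy---analyze the closed linear ODE for $(\bar r,\bar p)$, exploit a spectral decomposition, and use parity to separate $\bar p_0-\bar p_n$ from $\bar p_0+\bar p_n$---is sound and is in the same spirit as the paper's proof. However, there is a fatal circularity: you invoke Lemma~\ref{lm010911-19} and Proposition~\ref{lm010911-19a} as ``crucial inputs'', but in the paper both of these results are \emph{consequences} of Proposition~\ref{prop012812-19} (see Sections~\ref{sec8} and~\ref{sec7bis}, and the dependency diagram in Figure~\ref{fig}). Proposition~\ref{prop012812-19} sits at the root of the logical tree and must be proved from Assumption~\ref{ass1} alone, with no downstream input.

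Beyond the circularity, the step you flag as the ``main technical difficulty''---controlling the rank-two $\tilde\gamma$-perturbation via Duhamel and bootstrap---is precisely where your sketch is thinnest, and it is exactly what the paper resolves by a different device. Instead of a stationary-profile subtraction plus a sine/cosine diagonalization with perturbative boundary coupling, the paper truncates the forcing in time ($\bar\tau_+(t)=\bar\tau_+\mathbf 1_{[0,t_*]}(t)$), takes the Laplace transform in $t$ and the exponential Fourier transform in $x$, and solves \emph{algebraically} for $\tilde{\bar p}_0(\lambda)\pm\tilde{\bar p}_n(\lambda)$; see \eqref{eq:diff}--\eqref{eq:sum}. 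Plancherel then reduces both $L^2$-bounds to pointwise estimates on explicit symbols $e_{{\rm d},n},e_{{\rm s},n},a_n,c_n,\rho_{\cdot,n},\pi_{\cdot,n}$, established in the appendices using only \eqref{spec-bound}. The logarithm in \eqref{022812-19b} comes cleanly from the small-$\eta$ behavior $|c_n(\eta)|\lesssim|\eta|^{-1/2}$ (Appendix~\ref{sec:app6}), which after integration against $\sin^2(n^2\eta t_*/2)$ produces $\int_0^1\sin^2(n^2\eta t_*/2)\,\eta^{-1}\dd\eta\lesssim\log n$. Your parity observation $\psi_k(n)=(-1)^k\psi_k(0)$ is the spatial-domain analogue of the paper's even/odd splitting into $e_{{\rm d},n}$ and $e_{{\rm s},n}$; but the Laplace-transform route converts your Duhamel self-consistency into a purely algebraic identity, which is why the paper can close without any auxiliary damping lemma.
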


This  result is proved in Section  \ref{sec7}.  
Another consequence of Assumption \ref{ass1} is the following one-point estimate, which uses the previous result \eqref{022812-19a}, but allows us to get a sharper bound:

\begin{proposition}
\label{lm010911-19a} 
Under Assumption \ref{ass1}, 
for any $t\ge0$ we have
\begin{equation}
\label{052412-19a}
\left|\int_0^t\bar p_{0}^{(n)}(s)\dd s\right|\lesssim \frac{1}{n} \qquad \text{and} \qquad \left|\int_0^t\bar p_{n}^{(n)}(s)\dd s\right|\lesssim \frac{1}{n},\qquad
 n\ge1.
\end{equation}
\end{proposition}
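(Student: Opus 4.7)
The plan is to combine two independent identities for $\int_0^t \bar p_0^{(n)}(s)\,\dd s$, both derived from the linear ODE system satisfied by the first moments $(\bar r_x^{(n)}(t), \bar p_x^{(n)}(t))$. Taking expectations in \eqref{eq:qdynamicsbulk}--\eqref{eq:qdynamicsbound}, the stretch evolution reads $\partial_t \bar r_y^{(n)} = n^2 (\bar p_y^{(n)} - \bar p_{y-1}^{(n)})$ for $y \in \{1,\ldots,n\}$. Integrating in time and telescoping gives
\[
\int_0^t \bigl[\bar p_x^{(n)}(s) - \bar p_0^{(n)}(s)\bigr]\, \dd s = \frac{R_x(t)}{n^2}, \qquad R_x(t) := \sum_{y=1}^{x}\bigl(\bar r_y^{(n)}(t) - \bar r_y^{(n)}(0)\bigr),
\]
for every $x\in\T_n$ (with $R_0 := 0$). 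The energy bound $\E_n[\mathcal{H}_n(t)] \lesssim n$ provided by Proposition \ref{cor021211-19} yields $\sum_{y\in\T_n}(\bar r_y^{(n)}(t))^2 \le 2\E_n[\mathcal{H}_n(t)] \lesssim n$, so Cauchy-Schwarz gives $|R_x(t)|\lesssim \sqrt{xn}$ and hence $\sum_{x=0}^n |R_x(t)|\lesssim n^2$. Summing the identity above over $x\in\T_n$ produces the first relation
\[
\int_0^t P_n(s)\,\dd s = (n+1)\int_0^t \bar p_0^{(n)}(s)\,\dd s + O(1), \qquad P_n(s):=\sum_{x\in\T_n}\bar p_x^{(n)}(s).
\]

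The second relation comes from summing the $\bar p$-equations over $x\in\T_n$: the $\bar r$-contributions telescope exactly to zero (the boundary terms from the extremities cancel the inner telescoping), so
\[
\partial_t P_n(s) = n^2 \bar\tau_+ - 2\gamma n^2 P_n(s) - \tilde\gamma n^2 \bigl(\bar p_0^{(n)}(s) + \bar p_n^{(n)}(s)\bigr).
\]
Time-integration together with the crude estimate $|P_n(s)|\le \sqrt{n+1}\,\|\bar p^{(n)}(s)\|_2 \lesssim n$ (again from the energy bound) yields
\[
\int_0^t P_n(s)\,\dd s = \frac{\bar\tau_+ t}{2\gamma} - \frac{\tilde\gamma}{2\gamma}\int_0^t\bigl(\bar p_0^{(n)}+\bar p_n^{(n)}\bigr)(s)\,\dd s + O(1/n).
\]
At this point \eqref{022812-19a} is invoked: a single application of Cauchy-Schwarz gives $\big|\int_0^t(\bar p_0^{(n)}-\bar p_n^{(n)})(s)\,\dd s\big|\lesssim 1/n$, so the decomposition $\bar p_0^{(n)}+\bar p_n^{(n)} = 2\bar p_0^{(n)} - (\bar p_0^{(n)}-\bar p_n^{(n)})$ turns the previous identity into $\int_0^t P_n(s)\,\dd s = \frac{\bar\tau_+ t}{2\gamma} - \frac{\tilde\gamma}{\gamma}\int_0^t \bar p_0^{(n)}(s)\,\dd s + O(1/n)$. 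Equating this with the first relation leads to
\[
\Bigl(n+1+\tfrac{\tilde\gamma}{\gamma}\Bigr)\int_0^t \bar p_0^{(n)}(s)\,\dd s = \frac{\bar\tau_+ t}{2\gamma} + O(1),
\]
from which the sharp bound $\big|\int_0^t \bar p_0^{(n)}(s)\,\dd s\big|\lesssim 1/n$ follows. The analogous bound for $\int_0^t \bar p_n^{(n)}(s)\,\dd s$ is then immediate by combining the bound just proved with $\big|\int_0^t(\bar p_0^{(n)}-\bar p_n^{(n)})(s)\,\dd s\big|\lesssim 1/n$.

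The main obstacle is the a-priori linear-in-$n$ energy bound $\E_n[\mathcal{H}_n(t)]\lesssim n$, which is invoked twice above and which is non-trivial because energy is continuously injected through the tension $\bar\tau_+$ at $x=n$ and through the two Langevin thermostats; this bound is precisely the content of Proposition \ref{cor021211-19}, whose proof is one of the technical cores of Section \ref{sec6}. Granting this input, the sharpening from the $\sqrt{t}/n$ bound (coming from the naive Cauchy-Schwarz on \eqref{022812-19a}) to the $1/n$ bound for the separate integrals reduces to the elementary algebraic manipulation of the linear ODE system for the first moments described above.
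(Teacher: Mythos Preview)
Your argument is circular. You invoke Proposition~\ref{cor021211-19} (equivalently the energy bound of Proposition~\ref{prop022111-19}) to control $\sum_y (\bar r_y^{(n)}(t))^2$ and $|P_n(t)|$ at positive times $t>0$. But in the paper's logical structure (cf.~Figure~\ref{fig} and Section~\ref{sec7entropy}), the proof of Proposition~\ref{cor021211-19} goes through the entropy production bound of Proposition~\ref{thm-entropy-production}, and the latter requires precisely Proposition~\ref{lm010911-19a} to estimate the term $\mathrm{II}_n = n^2 T_+^{-1}\bar\tau_+ \int_0^t \bar p_n^{(n)}(s)\,\dd s$; see \eqref{061111-19}. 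Likewise the $L^2$ bound on the averages in Proposition~\ref{cor013112-19z}, which would suffice for your purposes, is also proved \emph{after} Proposition~\ref{lm010911-19a}, again using \eqref{052412-19a} (see \eqref{eq:pdyn-bar2aa}). Assumption~\ref{ass1} only gives you $\sum_y (\bar r_y^{(n)}(0))^2 \lesssim n$ at time $0$; the propagation of this bound to $t>0$ is exactly what is at stake, and it hinges on controlling the energy injected by the tension via $n^2\bar\tau_+\int_0^t \bar p_n^{(n)}(s)\,\dd s$.

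The paper avoids this circularity by working entirely with the explicit Fourier--Laplace representation of the closed linear ODE system \eqref{eq:qdynamics-bar}--\eqref{eq:pbd-bar} for the averages. The quantities $\tilde{\bar p}_{0,n}^{\rm diff}(i\eta)$ and $\tilde{\bar p}_{0,n}^{(+)}(i\eta)$ are expressed explicitly in terms of the initial Fourier data $\widehat{\bar r}(0,k)$, $\widehat{\bar p}(0,k)$ and $\bar\tau_+$ (formulas \eqref{eq:diff}--\eqref{eq:sum}), and the required bounds follow from pointwise estimates on the symbols (Appendices~\ref{sec:app1}--\ref{sec:app6}) together with the spectral hypothesis \eqref{spec-bound}. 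No energy or entropy input at positive times is used. Your identity for $P_n$ and the telescoping for $R_x$ are correct and elegant, but without an independent a~priori bound on $\sum_y (\bar r_y^{(n)}(t))^2$ they do not close.
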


This proposition is proved in Section  
\ref{sec8}.

\begin{remark}
In fact, in the whole paper, only the second estimate in \eqref{052412-19a} will be used. However, in its proof, the first estimate comes freely.
\end{remark}

\subsubsection{Estimates in the bulk} 
Provided with a good control on the boundaries, one can then obtain several estimates in the bulk of the chain. Two of them are used several times in the argument, and can be proved independently of each other. The first one is

\begin{proposition}[$L^2$ bound on average momenta and stretches]
\label{cor013112-19z}
Under Assumption \ref{ass1}, for any $t>0$
\begin{equation}
\label{053112-19z}
\frac{1}{n+1}\sup_{s\in[0,t]}\sum_{x\in\T_n}\Big\{\big(\bar r_x^{(n)} (s)\big)^2+\big(\bar
p_x^{(n)}(s)\big)^2\Big\}\lesssim 1,\qquad n\ge1.
\end{equation}
In addition,
for any $t>0$ we have
\begin{equation}
\label{010301-20}
n\sum_{x\in\T_n}\int_0^t \big(\bar
p_x^{(n)}(s)\big)^2\dd s\lesssim 1,\qquad n\ge1.
\end{equation}
\end{proposition}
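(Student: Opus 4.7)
The plan is a direct $L^2$ energy estimate on the deterministic linear ODE system satisfied by the averages, with the boundary forcing absorbed by the sharp one–point bound of Proposition \ref{lm010911-19a}.

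Applying the generator $L$ from \eqref{eq:generator} to the coordinate functions $r_x, p_x$ and taking expectation under $\bbE_n$, the profiles $(\bar r_x^{(n)}(t),\bar p_x^{(n)}(t))$ obey a closed linear ODE system: in the bulk
\begin{equation*}
\partial_t \bar r_x = n^2(\bar p_x-\bar p_{x-1}), \qquad \partial_t \bar p_x = n^2(\bar r_{x+1}-\bar r_x) - 2\ga n^2 \bar p_x,
\end{equation*}
and at the boundaries
\begin{equation*}
\partial_t \bar p_0 = n^2 \bar r_1 - (2\ga+\tilde\ga)n^2 \bar p_0, \qquad \partial_t \bar p_n = n^2 \bar\tau_+ - n^2 \bar r_n - (2\ga+\tilde\ga)n^2 \bar p_n,
\end{equation*}
with the convention $\bar r_0\equiv 0$ (superscripts suppressed). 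Introduce the $L^2$ energy of the average profile, $E(t) := \sum_{x=1}^n (\bar r_x(t))^2 + \sum_{x=0}^n (\bar p_x(t))^2$. Differentiating and performing Abel summation by parts on the Hamiltonian contributions, the surface terms $2n^2(\bar r_n \bar p_n - \bar r_1 \bar p_0)$ arising from the $\bar r$-derivative are killed exactly by the Hamiltonian pieces $-2n^2 \bar p_n \bar r_n + 2n^2 \bar p_0 \bar r_1$ coming from $\partial_t\bar p_0,\partial_t\bar p_n$; what survives is the clean identity
\begin{equation*}
\frac{dE}{dt} = -4\ga n^2 \sum_{x\in\T_n}\bar p_x^2 - 2\tilde\ga n^2(\bar p_0^2 + \bar p_n^2) + 2n^2 \bar\tau_+ \bar p_n.
\end{equation*}

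Integrating over $[0,s]$ for arbitrary $s\in[0,t]$, one obtains
\begin{equation*}
E(s) + 4\ga n^2 \int_0^s \sum_{x\in\T_n} \bar p_x^2(u)\,du + 2\tilde\ga n^2 \int_0^s (\bar p_0^2+\bar p_n^2)(u)\,du = E(0) + 2n^2 \bar\tau_+ \int_0^s \bar p_n(u)\,du.
\end{equation*}
From Assumption \ref{ass1}(1) together with Jensen's inequality $(\bar r_x(0))^2 + (\bar p_x(0))^2 \le 2\bar{\mc E}_x^{(n)}(0)$, one gets $E(0)\lesssim n$. The key observation is that, by Proposition \ref{lm010911-19a}, $|\int_0^s \bar p_n(u)\,du|\lesssim 1/n$ uniformly in $s\in[0,t]$, so the forcing term is $O(n^2\cdot 1/n) = O(n)$. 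Hence
\begin{equation*}
E(s) + 4\ga n^2 \int_0^s \sum_{x\in\T_n}\bar p_x^2(u)\,du \;\lesssim\; n, \qquad s\in[0,t].
\end{equation*}
Dropping the nonnegative integral and dividing by $n+1$ yields \eqref{053112-19z}; dropping $E(s)$ and setting $s=t$ yields \eqref{010301-20}.

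The main delicate point is really a bookkeeping one, namely guaranteeing the exact Hamiltonian cancellation in $dE/dt$, which depends crucially on the convention $\bar r_0=0$ and the precise form of the boundary momentum equations; without this cancellation, uncontrolled $O(n^2)$ boundary fluxes would remain. Once that is in place, the result reduces to a single integration and the invocation of Proposition \ref{lm010911-19a} to absorb the boundary tension — it is precisely because we have a priori control on the \emph{time average} of $\bar p_n$ (not only its pointwise square) that the $n^2\bar\tau_+$ forcing produces only an $O(n)$, and not an $O(n^2)$, contribution.
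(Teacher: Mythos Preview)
Your proof is correct and follows essentially the same route as the paper: compute the derivative of the $L^2$ energy $\Xi_n(t)=\sum_x(\bar r_x^2+\bar p_x^2)$ of the averaged profile, observe the exact Hamiltonian cancellation at the boundaries, and then integrate, using Proposition~\ref{lm010911-19a} to show the forcing term $2n^2\bar\tau_+\int_0^s\bar p_n$ is $O(n)$. The only cosmetic difference is that the paper phrases the derivation of \eqref{010301-20} as a Gronwall step, whereas you read it off directly from the integrated identity; your version is in fact slightly cleaner.
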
 
The proof of Proposition \ref{cor013112-19z} {is given} in Section
\ref{sec6.1} below, and makes use of Proposition \ref{lm010911-19a}.  Here we formulate some of its immediate consequences: 
\begin{itemize}
\item thanks to \eqref{053112-19z} we conclude  
 that   for each $t>0$ the sequence of the averages $\{\bar
  r^{(n)}(t)\}_{n\ge1}$ is bounded in $L^2(\T)$, thus it is weakly
compact. Therefore, to prove Theorem \ref{hydro1} one needs to identify the limit in \eqref{hydro-conv}, which is carried out in Section
\ref{sec11.2.1},
\item the second equality \eqref{hydro-conv1} of Theorem \ref{hydro1} simply follows from  \eqref{010301-20},
\item finally, the estimate  \eqref{053112-19z}  implies in particular that
\begin{equation}
\label{010203y}
\sup_{n\ge1}\sup_{s\in[0,t]}\big\|[\bar
r^{(n)}]^2 (s,\cdot)\big\|_{L^1(\T)}\lesssim 1.
\end{equation}
Therefore, we conclude that, for each $t>0$ the sequence $\{[\bar r^{(n)}]^2 (\cdot)\}_{n\ge1}$ is sequentially $\star-$weakly
compact in $\left(L^1([0,t];C(\T))\right)^\star$, as claimed. This is the first step to prove Theorem \ref{hydro2}.
\end{itemize}

The second important estimate focuses on the microscopic energy averages and is formulated as follows:
\begin{proposition}[Energy bound] Under Assumption \ref{ass1}, 
\label{prop022111-19}
for any $t\ge0$ we have
\begin{equation}
\label{022111-19}
\sup_{s\in[0,t],\,n\ge1}\bigg\{\frac{1}{n+1}\sum_{x\in\T_n}\bar{\cal
    E}_x^{(n)}(s)\bigg\}<+\infty.
\end{equation}
\end{proposition}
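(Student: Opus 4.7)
The plan is to deduce the uniform energy bound \eqref{022111-19} from the linear-in-$n$ relative entropy estimate $\tilde{\mathbf H}_n(s)\lesssim n$ (uniformly for $s\in[0,t]$) that will be established later as Proposition \ref{cor021211-19}, combined with a standard Gaussian partition-function computation for the reference measure $\tilde\nu$. Logically this proposition should therefore be invoked after the entropy bound of Section \ref{sec7entropy}.

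Concretely, I would apply the entropy inequality
\begin{equation*}
\alpha \bbE_n\bigl[\mathcal H_n(s)\bigr] \;\le\; \tilde{\mathbf H}_n(s) + \log\int_{\Omega_n}e^{\alpha \mathcal H_n}\,\dd\tilde\nu,\qquad \alpha>0.
\end{equation*}
Under $\tilde\nu$ the pairs $(r_x,p_x)$ are independent Gaussians with $p_x\sim\mathcal N(0,\beta_x^{-1})$ and, for $x\ge 1$, $r_x\sim \mathcal N(\bar\tau_+,\beta_x^{-1})$; the inverse temperatures $\beta_x=\beta(x/n)$ lie in a compact subinterval of $(0,\infty)$ with endpoints $T_-^{-1}$ and $T_+^{-1}$. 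For any fixed $\alpha\in(0,\min(T_-^{-1},T_+^{-1}))$ a one–site Gaussian moment generating function computation yields $\log\bbE_{\tilde\nu}[e^{\alpha \mathcal E_x}]\le C_\alpha$ uniformly in $x\in\T_n$ and $n\ge 1$, and by independence of the sites the log–partition function is bounded by $C_\alpha (n+1)$. Combined with the entropy bound this gives $\bbE_n[\mathcal H_n(s)]\lesssim n$ uniformly in $s\in[0,t]$, which is exactly \eqref{022111-19} after dividing by $n+1$ and taking the supremum.

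The hard step is the entropy bound itself. Yau's differential inequality for $\tilde{\mathbf H}_n$ produces a dissipative contribution from the interior momentum flips and the boundary Langevin heat baths that has the correct sign modulo boundary remainders, together with a problematic entropy current of size $n^2\bar\tau_+\bar p_n^{(n)}(s)$ stemming from the external tension at $x=n$. This current is nominally of order $n^2$, but its time integral is tamed by the momentum damping estimate of Proposition \ref{lm010911-19a}, which gives $\bigl|n^2\bar\tau_+\int_0^s \bar p_n^{(n)}(u)\,\dd u\bigr|\lesssim n$. Together with the initial bound $\tilde{\mathbf H}_n(0)\lesssim n$ from \eqref{eq:ass0}, this yields the desired linear-in-$n$ control of $\tilde{\mathbf H}_n(s)$. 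This step, rather than the easy Gaussian computation, is the true content of the argument, and it relies essentially on the spectral hypothesis \eqref{spec-bound} through the proof of Proposition \ref{lm010911-19a}.
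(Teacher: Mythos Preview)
Your proposal is correct and follows essentially the same route as the paper: the entropy inequality against the inhomogeneous Gaussian reference $\tilde\nu$ reduces \eqref{022111-19} to the linear entropy bound $\tilde{\mathbf H}_n(s)\lesssim n$, and the latter is obtained by differentiating $\tilde{\mathbf H}_n$ and controlling the tension contribution $n^2T_+^{-1}\bar\tau_+\bar p_n^{(n)}$ via Proposition~\ref{lm010911-19a}.

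One point you pass over too quickly: the derivative of $\tilde{\mathbf H}_n$ (see \eqref{eq:10}) contains, besides the dissipation and the tension term you identify, a bulk current contribution $-(T_+^{-1}-T_-^{-1})\,n\sum_x \bbE_n[j_{x,x+1}]$. This is not a ``boundary remainder'' with an obvious sign; in the paper it is handled by the fluctuation--dissipation relation \eqref{eq:fd1-n} together with the auxiliary boundary estimates of Lemma~\ref{lm011401-20}, and ultimately feeds back into a Gronwall argument (cf.\ \eqref{In}, \eqref{eq:10aa}). Your sketch should acknowledge this term and its treatment, since it is where most of the technical work in the entropy production estimate actually lies.
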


This estimate is proved in Section \ref{sec7entropy}, using a bound on the \emph{entropy production}, given in Proposition \ref{thm-entropy-production} below. 
Thanks to Proposition   \ref{prop022111-19}   the sequence $\{ \bar{\cal  E}^{(n)}(\cdot)\}_{n\ge1}$ 
is sequentially $\star-$weakly
compact in $(L^1([0,t];C_{0}^2(\T)))^\star$ for each $t>0$.  Therefore, to prove Theorem \ref{main-result}, one needs to identify the limit.  This identification requires the extra Assumption \ref{ass3}. 

\subsection{Consequence of Assumption \ref{ass3}}
The proof of Theorem \ref{main-result} is based on a 
\emph{mechanical and thermal energy equipartition} result given as follows:
\begin{proposition}[Equipartition of energy]
\label{equipartition}
Under Assumptions \ref{ass1} and \ref{ass3},  for any complex valued test function $
G\in C_0^\infty([0,+\infty)\times \T\times\bbT)$ we have
 \begin{equation}
\label{042701-20}
\lim_{n\to+\infty}\int_0^{t} \frac 1{n}  \sum_{x\in\T_n}
                                                 G_x(s)
   {\mathbb E}_n\Big[\big(\tilde r_x^{(n)}(s) \big)^2-\big(\tilde p_x^{(n)}(s) \big)^2\Big]
                                                 \dd s=0.
\end{equation}
\end{proposition}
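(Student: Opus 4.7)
The proof rests on the Fourier-Wigner formalism developed in Section \ref{sec:wigner} and on the decay estimate provided by Proposition \ref{thm020411-19}. The guiding idea is that $(\tilde r_x)^2-(\tilde p_x)^2$ is the antisymmetric (mechanical minus thermal) component of the fluctuation energy, which under the bulk Hamiltonian dynamics oscillates at the acoustic frequency $n^2\omega(k)$, with $\omega(k)=2|\sin(\pi k)|$, and therefore averages to zero against a smooth test function on the diffusive time scale.

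First I would introduce the complex fluctuation fields
$$
\psi_x^{(n)}(s):=\tilde r_x^{(n)}(s)+i\,\tilde p_x^{(n)}(s),
$$
so that
$$
\big(\tilde r_x^{(n)}\big)^2-\big(\tilde p_x^{(n)}\big)^2=\tfrac12\Big[\big(\psi_x^{(n)}\big)^2+\big((\psi_x^{(n)})^\star\big)^2\Big],
$$
reducing the statement to proving that, for any smooth $G$,
$$
\lim_{n\to\infty}\frac 1n\int_0^t\sum_{x\in\T_n}G_x(s)\,\mathbb{E}_n\Big[\big(\psi_x^{(n)}(s)\big)^2\Big]\dd s=0.
$$
Passing to Fourier via \eqref{parseval} and \eqref{eq:F3}, this expression rewrites, up to harmless constants, as
$$
\int_0^t\underset{k\in\hat\T_n}{\hat\sum}\,\underset{k'\in\hat\T_n}{\hat\sum}\tilde G^n(s,k+k')\,\mathbb{E}_n\big[\hat\psi^{(n)}(s,k)\,\hat\psi^{(n)}(s,k')\big]\dd s,
$$
where $\tilde G^n$ denotes a suitable discrete Fourier transform of $G(s,\cdot)$ in the spatial variable.

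Next, I would derive the evolution equation for $\Phi^n(s,k,k'):=\mathbb{E}_n\big[\hat\psi^{(n)}(s,k)\hat\psi^{(n)}(s,k')\big]$ from the generator \eqref{eq:generator}. The antisymmetric bulk part $A$ produces the oscillating factor $in^2(\omega(k)+\omega(k'))\Phi^n$; the flip dissipation $\gamma n^2 S$, using that $S(\psi_x\psi_y)=-2\psi_x\psi_y+\psi_x\psi_y^\star+\psi_x^\star\psi_y$ for $x\neq y$, yields a damping of order $n^2$ coupled to the conjugate sector; and the boundary generators $\tilde\gamma n^2\tilde S$ together with the tension $\bar\tau_+$ contribute source terms located at $x=0,n$ that involve the boundary momenta $\bar p_0^{(n)},\bar p_n^{(n)}$ and the endpoint covariances. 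Integrating by parts in $s$, and dividing by the non-zero factor $\omega(k)+\omega(k')$ away from the degenerate set $\{k,k'\in\{0,\tfrac12\}\}$, one gains a formal $n^{-2}$ from the rapid oscillation, at the price of time-boundary terms at $s=0,t$ and of derivatives of $G$ in $s$ and in the spatial variable.

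The main obstacle is twofold. First, the flip dissipation couples the $(\hat\psi,\hat\psi)$ sector to the $(\hat\psi^\star,\hat\psi^\star)$ sector, so the analysis must be carried out on a closed $2\times 2$ system obtained by diagonalization; this parallels the argument leading to Proposition \ref{thm020411-19} and transports the $\log n$ bound on the covariance $L^2$ norm into the present setting. Second, near the degenerate wave numbers $k,k'\in\{0,\tfrac12\}$ the stationary-phase gain is lost, and one must rely on the uniform $L^2$ bound alone. Splitting the double Fourier sum into a neighborhood of width $n^{-\alpha}$ around the degenerate set and its complement yields a degenerate contribution of order $n^{-\alpha}\log n$ and a non-degenerate one of order $n^{\alpha-2}\log n$, both vanishing for any $0<\alpha<2$. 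The time-boundary terms at $s=0$ are controlled by Assumption \ref{ass3}, those at $s=t$ by the same bound propagated forward via Proposition \ref{thm020411-19}; the boundary source terms generated by $\tilde S$ and by $\bar\tau_+$ are absorbed through Cauchy-Schwarz combined with Propositions \ref{prop012812-19}, \ref{lm010911-19a} and \ref{cor013112-19z}.
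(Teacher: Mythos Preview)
Your strategy is the paper's: express the quantity as a pairing with the Fourier--Wigner functions $\tilde Y_n^\pm$, use the evolution equation for their antisymmetric part $\tilde R_n=\tilde Y_n^+-\tilde Y_n^-$ to solve algebraically for the symmetric part $\tilde V_n=\tilde Y_n^++\tilde Y_n^-$ via the oscillation factor, split the $k$-sum into a degenerate neighborhood and its complement, and control both pieces through Proposition~\ref{thm020411-19} together with the boundary estimates.

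Two quantitative points deserve correction. First, the oscillation factor multiplying $\tilde V_n$ in the equation for $\partial_t\tilde R_n$ (the paper's \eqref{010511-19a}) is $n^2\sigma_n s=2n^2\big(\sin^2(\pi k)+\sin^2(\pi(k+\tfrac{\eta}{n+1}))\big)$, not $n^2(\omega(k)+\omega(k'))$: the field $\hat\psi=\hat r+i\hat p$ does \emph{not} diagonalize the Hamiltonian part $A$, so the degenerate set is only $\{k=0\}$, not $\{0,\tfrac12\}$. Second, your claimed scaling $n^{\alpha-2}\log n$ for the non-degenerate contribution is too optimistic. After dividing by $\sigma_n s$, the damping term $2\gamma n^2\tilde R_n$ loses the $n^{-2}$ completely and must be controlled via the dissipation bound $n^2\int\sum|\tilde R_n|^2\lesssim\log^2 n$ from Proposition~\ref{thm020411-19}, yielding only $n^{3s/2-1}\log n$; the boundary source terms are even worse, of order $n^{3s/2-1/2}$. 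The admissible window for the cutoff exponent therefore shrinks to $s\in(0,\tfrac13)$ rather than your $(0,2)$, but the argument still closes.
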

The proof of this result is presented in Section \ref{sec:wigner} (cf.~conclusion in Section \ref{sec11}), and uses some of the results above, namely Proposition \ref{prop012812-19} and Proposition \ref{prop022111-19}.



\section{Proofs of the hydrodynamic limit theorems}

\label{sec:form-deriv-equat}

In the present section we show  Theorems \ref{hydro1}, \ref{hydro2}
and \ref{main-result} announced in Section \ref{sec3}. 
The proof of the latter is contingent on several intermediate  results: 
\begin{itemize}
\item first of all, to prove the three results we need specific boundary estimates which will be all stated in Section \ref{ssec:boundary} (see Lemma \ref{lem:bound2}), and which are byproducts of Proposition \ref{lm010911-19a}, Proposition \ref{cor013112-19z} and Proposition \ref{prop022111-19};
\item the proof of Theorem \ref{hydro2} requires moreover Lemma \ref{lm010301-20}, which is based on a detailed analysis of
the average dynamics $(\bar r_x^{(n)},\bar p_x^{(n)})_{x\in\T_n}$ (that
will be carried out in Section \ref{sec66});
\item finally, to show Theorem \ref{main-result} we need: first, a uniform $L^2$ bound on the averages of momentum, see Lemma \ref{lm010911-19} below. The latter will be proved in Section \ref{sec7bis}, as a consequence of Proposition \ref{prop012812-19}; second, the equipartition  result for the fluctuation of the potential and kinetic energy
of the chain, which has already been stated in Section \ref{sec:equipartition}, see Proposition \ref{equipartition}.
\end{itemize}


\subsection{Treatment of boundary terms} \label{ssec:boundary}
First of all, the conservation of the energy gives the following microscopic identity: 
\begin{equation}
  \label{eq:en-evol}
  n^{-2} L \mathcal E_x(t) =  j_{x-1,x} (t)-
  j_{x,x+1} (t), \qquad x\in\T_n^{\rm o},
\end{equation}
where \begin{equation}
\label{jx}
 j_{x,x+1} (t):= j_{x,x+1} ({\bf r}(t), {\bf p}(t)), \qquad \text{with }  j_{x,x+1}({\bf r}, {\bf p}) :=-p_xr_{x+1}, 
\end{equation}
are the \emph{microscopic currents}.
At the boundaries we have
\begin{align}
  \label{eq:en-evol-}
 n^{-2} L \mathcal E_0(t) &= - j_{0,1} (t)+ \tilde\gamma  \left(T_- - p_0^2(t) \right),\\
  \label{eq:en-evol+}
   n^{-2} L \mathcal E_n(t) &= j_{n-1,n}(t) + \bar\tau_+ p_n(t) + \tilde\gamma \left(T_+ - p_n^2(t) \right).
\end{align}
One can see that boundaries play an important role. Before proving the hydrodynamic limit results, one needs to understand very precisely how boundary variables behave. This is why we start with collecting here all the estimates that  are essential in the following argument. Their proofs require quite some work, and for the sake of clarity this will be postponed to Section \ref{sec:conseque}. 


\begin{lemma}[Boundary estimates]\label{lem:bound2}

  The following  holds: for any $t\ge0$
  \begin{itemize}
  \item[(i)] \emph{(Momentum correlations)}
  \begin{equation}
  \lim_{n\to\infty} \mathbb{E}_n\left[ \int_0^t  p_{0}(s) p_{1}(s)\dd s\right] = 0, \qquad   \lim_{n\to\infty} \mathbb{E}_n\left[ \int_0^t  p_{n-1}(s) p_{n}(s)\dd s\right] = 0. \label{ex-p0p1}
\end{equation}

\item[(ii)] \emph{(Boundary correlations)}
  \begin{equation}
 \bigg| \mathbb{E}_n\left[ \int_0^t p_{0}(s) r_1(s) \dd s\right] \bigg|\lesssim
    \frac{1}{\sqrt{n}}, 
 \qquad \bigg|\mathbb{E}_n\left[ \int_0^t p_{n}(s) r_n(s) \dd s\right]\bigg|\lesssim
    \frac{1}{\sqrt{n}} , \qquad n\ge1. \label{ex-2-r}
\end{equation}
\item[(iii)] \emph{(Boundary stretches)}
  \begin{equation}
  \left|\mathbb{E}_n\left[ \int_0^t r_1(s)\dd s\right]\right|\lesssim
    \frac{1}{\sqrt{n}},   \qquad \left|\mathbb{E}_n\left[ \int_0^t \big(r_n(s) -\bar\tau_+\big)\dd s\right]\right|\lesssim
    \frac{1}{\sqrt{n}},  \qquad n\ge 1.  \label{ex-2-l1}
\end{equation}
\item[(iv)] \emph{(Boundary temperatures, part I)} for any $ n\ge1$
  \begin{equation}
    \label{eq:ex-1}
 \left| \mathbb{E}_n\left[   \int_0^t  \left(T_--p_{0}^2(s) \right)\dd
  s\right] \right|\lesssim \frac{1}{\sqrt n},\qquad    \left|\mathbb{E}_n\left[   \int_0^t  \left(T_+-p_{n}^2(s) \right)\dd
  s\right]\right| \lesssim \frac{1}{\sqrt n}.  
 \end{equation}
\item[(v)] \emph{(Mechanical energy at the boundaries, part I)}
  \begin{equation}
    \label{eq:13}
    \mathbb{E}_n\left[ \int_0^t \left(r_1^2(s) + r_n^2(s)\right)
      \dd s\right] \lesssim 1, \qquad n\ge1.
  \end{equation}
\item[(vi)] \emph{(Boundary currents)}
  \begin{equation}
    \label{eq:11}
      \lim_{n\to\infty}    \mathbb{E}\left[   \int_0^t  j_{0,1}(s)
      \dd s\right]  = 0, \qquad \lim_{n\to\infty}    \mathbb{E}\left[   \int_0^t  j_{n-1,n}(s)
      \dd s\right]  = 0.
  \end{equation}
\item[(vii)] \emph{(Mechanical energy at the boundaries, part II) :}  at the left boundary point
  \begin{equation}
  \left|\mathbb{E}_n\left[ \int_0^t \Big(r_1^2(s)-T_-\Big) \dd s\right]\right|\lesssim
    \frac{1}{\sqrt{n}}, \qquad n\ge 1  \label{ex-2-l11t}
\end{equation}
and at the right boundary point
\begin{equation}   
 \left|\mathbb{E}_n\left[ \int_0^t \Big(r_n^2(s)-\bar\tau_+^2-T_+\Big)\dd s\right] \right|\lesssim
    \frac{1}{\sqrt{n}}, \qquad n\ge 1. \label{ex-2-r11t}
\end{equation}
\item[(viii)] \emph{(Boundary temperatures, part II)}
\begin{equation}
    \label{eq:ex-1bis}
  \sum_{x=0,n}T_x\mathbb{E}_n\left[   \int_0^t  \left(T_x-p_{x}^2(s) \right)\dd
  s\right] \lesssim \frac{1}{n},\qquad
\,n\ge1.
 \end{equation}
 \end{itemize}
\end{lemma}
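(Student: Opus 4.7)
Each estimate in Lemma \ref{lem:bound2} will be obtained by Dynkin's formula applied to a suitable boundary observable, combined with the a priori bounds from Propositions \ref{prop012812-19}, \ref{lm010911-19a}, \ref{cor013112-19z}, and \ref{prop022111-19}. The general mechanism is that the generator $L$ carries a factor $n^2$, so for any polynomial boundary observable $F$ with $|\mathbb{E}_n F(t)|\lesssim n$ (which holds for $\mathcal{E}_0, p_0^2, r_1^2, p_0 r_1$, etc.\ by the energy bound of Proposition \ref{prop022111-19}), dividing Dynkin's identity by $n^2$ produces a time-integrated relation with remainder of order $1/n$.

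I would first dispose of (iii), (v), and (vi). For (iii), taking expectations in the boundary SDE for $p_0$ yields $\int_0^t \bar r_1^{(n)}(s)\,\dd s = n^{-2}[\bar p_0^{(n)}(t)-\bar p_0^{(n)}(0)] + (2\gamma+\tilde\gamma)\int_0^t \bar p_0^{(n)}(s)\,\dd s$; the bounds $|\bar p_0^{(n)}|\lesssim \sqrt{n}$ from Proposition \ref{cor013112-19z} and $|\int_0^t \bar p_0^{(n)}\,\dd s|\lesssim 1/n$ from Proposition \ref{lm010911-19a} then give in fact the sharper $1/n$ rate, and the right-boundary estimate follows symmetrically. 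For (vi), Dynkin on $\mathcal{E}_0$ with \eqref{eq:en-evol-} yields $\mathbb{E}_n\int_0^t[\tilde\gamma(T_--p_0^2)-j_{0,1}]\,\dd s = O(1/n)$, so combined with (iv) one deduces $\mathbb{E}_n\int_0^t j_{0,1}\,\dd s\to 0$. For (v), Dynkin on $r_1 p_0$ expresses $\mathbb{E}_n\int_0^t r_1^2\,\dd s$ as a linear combination of the boundary integrals $\mathbb{E}_n\int_0^t p_0^2\,\dd s$, $\mathbb{E}_n\int_0^t p_0 p_1\,\dd s$ and $\mathbb{E}_n\int_0^t p_0 r_1\,\dd s$, each of them $O(1)$ by Cauchy--Schwarz and the energy bound, plus an $O(1/n)$ residual; the $r_n^2$-bound is symmetric.

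The genuine obstacle is the coupled pair (ii)/(iv): Itô's formula on $p_0^2$, exploiting that the flip preserves $p_0^2$, gives the single identity $\tilde\gamma\,\mathbb{E}_n\int_0^t(T_--p_0^2)\,\dd s + \mathbb{E}_n\int_0^t p_0 r_1\,\dd s = O(1/n)$, which in fact coincides with the boundary energy balance, so these two estimates cannot be disentangled within the boundary algebra alone. My plan is to prove (ii) independently via the decomposition $\mathbb{E}_n[p_0 r_1] = \bar p_0^{(n)}\bar r_1^{(n)} + \mathbb{E}_n[\tilde p_0^{(n)}\tilde r_1^{(n)}]$. Proposition \ref{prop012812-19}, combined with the elementary inequality $|\bar p_0|^2\le\tfrac12(|\bar p_0+\bar p_n|^2+|\bar p_0-\bar p_n|^2)$, yields $\|\bar p_0^{(n)}\|_{L^2(0,t)}\lesssim \sqrt{\log n}/n$, while $\|\bar r_1^{(n)}\|_{L^2(0,t)}\lesssim \sqrt{n}$ follows from Proposition \ref{cor013112-19z}; Cauchy--Schwarz then bounds the deterministic contribution by $O(\sqrt{(\log n)/n})$, well inside the target rate $1/\sqrt n$. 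The fluctuation covariance is controlled via the boundary entropy production estimate from Section \ref{sec6} (the key input used in the proof of Proposition \ref{prop022111-19}), which leverages the fact that the Langevin thermostat dissipates $(T_--p_0^2)$ at rate $n^2$. Once (ii) is proved, (iv) follows at once from the identity above. Item (i) is treated analogously by Dynkin on $p_0 p_1$: the pointwise identity $p_1-p_0 = n^{-2}\partial_t r_1$ kills the otherwise dangerous $\mathbb{E}_n\int r_1(p_1-p_0)\,\dd s = O(1/n)$ contribution, so the surviving terms reduce to quantities bounded by (ii) and its $p_1$-shifted analogue.

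Finally, (vii) and (viii) are refinements of (v) and (iv). For (vii), combining the $r_1 p_0$-Dynkin identity already used in (v) with the sharp estimates (ii) and (iv) pinpoints the leading behaviour of $\mathbb{E}_n\int_0^t r_1^2\,\dd s$ as $T_- t$ with an $O(1/\sqrt n)$ correction; at $x=n$ the same scheme produces the additional $\bar\tau_+^2$ term via the drift $\bar\tau_+\,p_n$. For (viii), summing the boundary $p_x^2$-Itô identities at $x=0$ and $x=n$ and invoking $\bar\tau_+\,|\int_0^t \bar p_n^{(n)}\,\dd s|\lesssim 1/n$ from Proposition \ref{lm010911-19a} improves the symmetric combination $\sum_{x=0,n} T_x\mathbb{E}_n\int_0^t(T_x-p_x^2)\,\dd s$ by a full factor $1/n$ beyond the individual rate $1/\sqrt n$ of (iv). The only genuinely delicate point in the whole argument is the decoupling of (ii) from (iv); once this is carried out using Proposition \ref{prop012812-19} together with the boundary entropy production, all other items follow from essentially algebraic manipulations of Dynkin's identities.
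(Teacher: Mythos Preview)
Your overall plan is in the right spirit, but it diverges from the paper in two places, one of which is a genuine gap.

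\textbf{Items (i), (ii), (iv): a missed shortcut.} The paper does not split into mean plus fluctuation, nor does it use Dynkin on $p_0p_1$. Instead it exploits directly that under the reference measure $\tilde\nu$ the boundary momentum $p_x$ ($x=0,n$) is a centered Gaussian of variance $T_x$, so Gaussian integration by parts gives, for any observable $g$,
\[
\mathbb{E}_n[p_x\,g]=T_x\int_{\Omega_n} g\,\partial_{p_x}\tilde f_n\,\dd\tilde\nu,
\qquad
\mathbb{E}_n[p_x^2-T_x]=T_x\int_{\Omega_n} p_x\,\partial_{p_x}\tilde f_n\,\dd\tilde\nu.
\]
Cauchy--Schwarz then pairs these against the boundary Fisher information $\int(\partial_{p_x}\tilde f_n)^2/\tilde f_n\,\dd\tilde\nu$, whose time integral is $\lesssim 1/n$ by the entropy production bound (Proposition~\ref{thm-entropy-production}). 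This handles (i), (ii), (iv) in one stroke, with the other factor bounded by the energy estimate. Your Dynkin-on-$p_0p_1$ route for (i) can be made to work (the leftover $p_0r_2$ term is again of the form $p_0\cdot g$ and falls to the same IBP), but it is a detour; your phrase ``$p_1$-shifted analogue'' is misleading since there is no Fisher-information control at $x=1$.

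\textbf{Item (viii): a genuine gap.} Summing the It\^o identities for $p_0^2$ and $p_n^2$ gives
\[
\tilde\gamma\,\mathbb{E}_n\!\int_0^t(T_x-p_x^2)\,\dd s=\pm\,\mathbb{E}_n\!\int_0^t p_x r_{x'} \,\dd s+O(1/n)\quad(x=0,n),
\]
and the right-hand side is only $O(1/\sqrt n)$ by (ii); weighting by $T_x$ and summing does not produce any cancellation of these current terms when $T_-\neq T_+$. (The \emph{unweighted} sum is $O(1/n)$ via the total energy balance, but the weighted combination in \eqref{eq:ex-1bis} is a different linear functional.) The paper's proof of (viii) is genuinely different: it computes the derivative of the entropy $\mathbf{H}_{n,T}$ relative to $\nu_T$ for a \emph{free} parameter $T>0$, uses non-negativity of both $\mathbf{H}_{n,T}(t)$ and the Dirichlet form $\mathbf D_T$ to obtain the one-sided bound
\[
\sum_{x=0,n}\bigl(T^{-1}-T_x^{-1}\bigr)\,\mathbb{E}_n\!\int_0^t\bigl(T_x-p_x^2(s)\bigr)\dd s\;\lesssim\;\frac1n,
\]
and then sets $T^{-1}:=T_-^{-1}+T_+^{-1}$, which turns the coefficients into $T_{\mp}^{-1}$ and, after multiplying by $T_-T_+$, yields exactly \eqref{eq:ex-1bis}. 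The sign information from entropy positivity is what your It\^o approach lacks.
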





Provided with all the previous results which have been stated (but not proved yet), we are ready to prove Theorem \ref{hydro1} and \ref{main-result}. Before that, in order to make the presentation unequivocal, {we present} in Figure \ref{fig} a diagram with the previous statements, and the sections where they will be proved into parentheses.  

\begin{center}
\begin{figure}[h!]

\begin{tikzpicture}[->,>=stealth']

 \node[state] (Prop41) 
 { \begin{tabular}{c} \textbf{Proposition} \ref{prop012812-19}  \\ (Section \ref{sec7}) \end{tabular} };
  
 \node[state,    	
  yshift=2cm, 		
  right of=Prop41, 	
  node distance=3cm, 	
  anchor=center] (Cor42) 	
 {%
 \begin{tabular}{c} 	
  \textbf{Proposition} \ref{lm010911-19a}\\
  (Section \ref{sec8})
 \end{tabular}
 };
 
 \node[state,
  right of=Prop41, 	
  node distance=7cm, 	
  yshift=-3cm,
  anchor=center] (Prop46) 
 {%
 \begin{tabular}{c}
  \textbf{Proposition} \ref{equipartition} \\
(Section \ref{sec:wigner})
 \end{tabular}
 };

 \node[state,
  right of=Cor42,
  yshift=2cm,
  node distance=4cm,
  anchor=center] (Prop44) 
 {%
 \begin{tabular}{c}
  \textbf{Proposition} \ref{cor013112-19z}\\
 (Section \ref{sec6.1})
 \end{tabular}
 };
 
  \node[state,
  right of=Cor42,
  yshift=-2cm,
  node distance=4cm,
  anchor=center] (Prop45) 
 {%
 \begin{tabular}{c}
  \textbf{Proposition} \ref{prop022111-19}\\
 (Section \ref{sec7entropy})
 \end{tabular}
 };
 
   \node[state,
  right of=Cor42,
  node distance=8cm,
  anchor=center] (Lem51) 
 {%
 \begin{tabular}{c}
  \textbf{Lemma} \ref{lem:bound2}\\
 (Section \ref{sec:conseque})
 \end{tabular}
 };

 \path[dashed] (Prop41) 	edge[bend left=20]   (Cor42)
 (Prop41)     	edge[bend right=20]  (Prop46)
(Cor42)  edge[bend left=20]  (Prop44)
(Cor42) edge[bend right=20]  (Prop45)
(Prop44) edge[bend left=20]  (Lem51)
(Prop45) edge[bend right=20] (Lem51)
(Cor42) edge (Lem51)
(Prop45) edge (Prop46)
(Lem51) edge[bend left=20] (Prop46) 
 ;

\end{tikzpicture}
\caption{An arrow from A to B means that A is used to prove B, but is not necessarily a direct implication.}
\label{fig}
\end{figure}
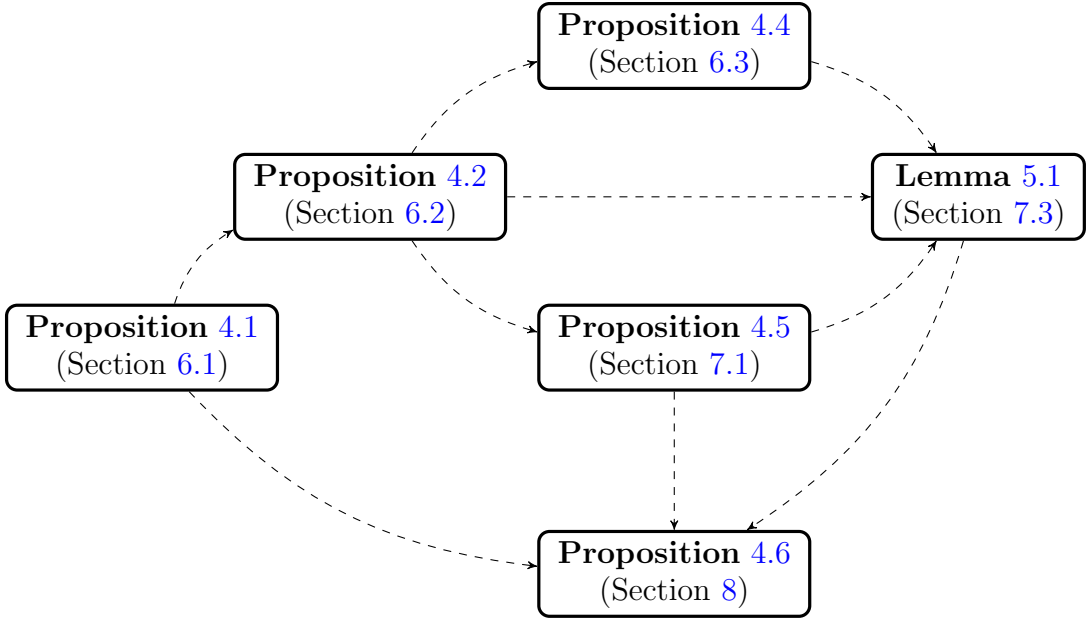
\end{center}

\subsection{Proof of Theorem \ref{hydro1}}

\label{sec11.2.1}

Recall the diffusive equation \eqref{eq:linear}, which can be formulated in a weak form as:
\begin{multline}
  \label{eq:wlinear}
  \int_0^1 \; G(u) \big(r(t,u) - r(0,u)\big) \dd u \\ 
= \frac {1}{{2\gamma}} \int_0^t \dd s
  \int_0^1    \; G''(u) r(s,u) \dd u -  \frac {1}{{2\gamma}}  G'(1)
  \bar\tau_+ t
  ,\qquad t\ge 0,
\end{multline}
for any  test function $G\in C^2_{0}(\T)$. Existence and uniqueness of such weak solutions in an appropriate
space of integrable functions are standard, {see for instance \cite[Section 2.3]{strauss}}.

By the microscopic evolution equations, see 
\eqref{eq:qdynamicsbulk}--\eqref{eq:qdynamicsbound}, we have  (cf.~\eqref{disc-approx})
\begin{multline}
 \bbE_n\left[  \frac{1}{n+1}\sum_{x\in\T_n}
    G_x\left(r_x(t) -  r_x(0)\right) \right] 
    = \frac{n^2}{n+1}\bbE_n\left[ \int_0^t \dd s  \sum_{x=1}^n 
   G_x \left(p_x(s) -  p_{x-1}(s)\right) \right] \\
 = \bbE_n\left[  \int_0^t \dd s \left\{ -\sum_{x=1}^{n-1} 
    (\nabla_n G)_x\;p_x(s) - (n+1)
    G_1 p_0(s)\right\}\right]+o_n(1).
  \label{eq:timevol1} \end{multline}
As usual, the symbol $o_n(1)$ denotes an expression that {tends to zero} with $n\to+\infty$.  The dynamics of the averages $(\bar{\mathbf{r}}(t),\bar{\mathbf{p}}(t))$ is easy to deduce from the evolution equations (see also 
\eqref{eq:pdyn-bar} where it is detailed). We can therefore rewrite the right hand side of \eqref{eq:timevol1} as
\begin{align}
  &
 \bbE_n\left[ -\int_0^t  \dd s  \left\{ \sum_{x=1}^{n-1} \frac{1}{2\gamma} (\nabla_n
      G)_x\left(r_{x+1}(s) - r_x(s)\right) +\frac {1}{2\gamma+ \tilde\gamma} (\nabla_n
      G)_0 r_1(s) \right\}\right] \label{eq:timevol+b} 
\\
&+\bbE_n\left[ \frac{1}{2\gamma n^2} \sum_{x=1}^{n-1} (\nabla_n
      G)_x\left(p_x(t) - p_x(0)\right) + \frac
      {1}{(2\gamma +\tilde \gamma)n^2}   
     (\nabla_n
      G)_0\left(p_0(t) - p_0(0)\right) \right]  +o_n(1).  \notag
\end{align}
Since $G$ is smooth we have $
\lim_{n\to+\infty}\sup_{x\in \T_n}\big|(\nabla_nG)_x- G'(x)\big|=0.
$
Using this and Proposition \ref{cor013112-19z} one shows that the
second expression in
\eqref{eq:timevol+b} converges to $0$,
leaving as the only possible significant the first term. 
Summing by parts and recalling  that $G(0)=0$, it can be rewritten as
\begin{multline}\label{eq:timevol3}
\bbE_n\left[ \int_0^t  \frac{1}{2\gamma}\left\{\frac{1}{n+1} \sum_{x=2}^{n-1}
      (\Delta_n G)_x\; r_x(s) - (\nabla_n G)_{n-1}\; r_n(s) \right\} \dd s\right]\\
      - \left(\frac {1}{2\gamma+ \tilde\gamma}  
      (\nabla_n G)_0 -\frac{1}{2\gamma}  (\nabla_n G)_1 \right)
   \bbE_n\left[\int_0^t r_1(s) \dd s\right].    
\end{multline}
Therefore, we need to understand the macroscopic behavior of the boundary strech variables, which is done thanks to Lemma \ref{lem:bound2}: from \eqref{ex-2-l1} we conclude that the second term {tends to zero}, as $n\to+\infty$.
Using again \eqref{ex-2-l1} but for the right boundary we infer that
\eqref{eq:timevol3} can be written as
\begin{equation}\label{eq:timevol31}
  \begin{split}
\int_0^t \dd s\left\{ \frac{1}{2\gamma}\int_0^1G''(u) \bar r^{(n)}(s,u)\dd u- G'(1) \bar\tau_+
\right\}
+o_n(t),
  \end{split}
\end{equation}
 where $\lim_{n\to+\infty} \sup_{s\in[0,t]}o_n(s)=0$.
Thanks to Proposition \ref{cor013112-19z} we know that for a given $t_*>0$
\begin{equation}
\label{010203}
\sup_{n\ge1}\sup_{s\in[0,t_*]}\big\|\bar
r^{(n)}(s,\cdot)\big\|_{L^2(\T)}\lesssim 1.
\end{equation}
The above means, in particular  that   the sequence $\{\bar
  r^{(n)}(\cdot)\}_{n\ge1}$ is bounded in the space $
L^\infty([0,t_*];L^2(\T))$. As this space is dual to the separable
Banach space $L^1([0,t_*];L^2(\T))$, the sequence $\{\bar
  r^{(n)}(\cdot)\}_{n\ge1}$ is $\star$-weakly
sequentially compact. Suppose that  $
r\in L^\infty([0,t_*];L^2(\T))$ is its $\star$-weakly limiting point. Any limiting point of the
sequence satisfies  
\eqref{eq:wlinear}, which shows that has to be unique and as a result $\{\bar
  r^{(n)}(\cdot)\}_{n\ge1}$ is $\star$-weakly
convergent to $
r\in L^\infty([0,t_*];L^2(\T))$, the solution to \eqref{eq:linear}--\eqref{eq:bc0}.

\subsection{Proof of Theorem \ref{hydro2}}

\label{sec13.3.3}

The following estimate shall be crucial in our
subsequent argument.
\begin{lemma}
\label{lm010301-20}
For any $t>0$ we have
\begin{equation}
\label{060301-20}
n\sum_{x=0}^{n-1}\int_0^t\big(\bar r_{x+1}^{(n)}(s)-\bar
r_x^{(n)}(s)\big)^2\;\dd s\lesssim 1,\qquad n\ge1.
\end{equation}
\end{lemma}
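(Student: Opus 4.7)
The strategy is to isolate $n^2 \sum_{x=0}^{n-1}(\bar r_{x+1}-\bar r_x)^2$ in the time derivative of a suitable ``flux'' observable, and then to bound the remaining terms using the $L^2$ estimates of Proposition \ref{cor013112-19z}. I will use the quantity
$$\Phi(s) := \sum_{x=0}^{n-1} \bar p_x^{(n)}(s)\,\big(\bar r_{x+1}^{(n)}(s) - \bar r_x^{(n)}(s)\big),$$
whose leftmost summand $\bar p_0^{(n)}\bar r_1^{(n)}$, thanks to $\bar r_0 \equiv 0$, will incorporate the boundary gradient $\bar r_1^2$ on the same footing as the bulk.

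Taking expectations in \eqref{eq:qdynamicsbulk}--\eqref{eq:qdynamicsbound} yields $\dot{\bar r}_x = n^2(\bar p_x-\bar p_{x-1})$, and $\dot{\bar p}_x = n^2(\bar r_{x+1}-\bar r_x) - 2\gamma n^2 \bar p_x$ in the bulk, while $\dot{\bar p}_0 = n^2(\bar r_1 - \bar r_0) - (2\gamma+\tilde\gamma) n^2 \bar p_0$ at $x=0$ (analogously at $x=n$). Differentiating $\Phi(s)$ and performing a discrete summation by parts on the Laplacian $\dot{\bar r}_{x+1} - \dot{\bar r}_x = n^2(\bar p_{x+1}+\bar p_{x-1}-2\bar p_x)$ (with the left-end value $\dot{\bar r}_1 - \dot{\bar r}_0 = n^2(\bar p_1 - \bar p_0)$) leads to the dissipative identity
\begin{multline*}
\dot\Phi(s) = n^2 \sum_{x=0}^{n-1}\big(\bar r_{x+1}-\bar r_x\big)^2 - 2\gamma n^2 \Phi(s) - \tilde\gamma n^2 \bar p_0 \bar r_1 \\
+ n^2\, \bar p_{n-1}(\bar p_n - \bar p_{n-1}) - n^2 \sum_{x=0}^{n-2}(\bar p_{x+1}-\bar p_x)^2.
\end{multline*}
Integrating on $[0,t]$ and rearranging expresses $n^2 \int_0^t \sum_{x=0}^{n-1}(\bar r_{x+1}-\bar r_x)^2 \, \dd s$ as a sum of five contributions, and the lemma will follow if each is shown to be $\lesssim n$.

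I bound them in turn. (i) By Cauchy--Schwarz, $|\Phi(s)| \le \big(\sum \bar p_x^2\big)^{1/2}\big(\sum(\bar r_{x+1}-\bar r_x)^2\big)^{1/2} \lesssim n$, using $\sum \bar p_x^2, \sum \bar r_x^2 \lesssim n$ from Proposition \ref{cor013112-19z}. (ii) The $\bar p$-equations provide the crucial identities $\bar p_x(\bar r_{x+1}-\bar r_x) = \tfrac{1}{2n^2}\tfrac{d}{ds}\bar p_x^2 + 2\gamma \bar p_x^2$ in the bulk and $\bar p_0 \bar r_1 = \tfrac{1}{2n^2}\tfrac{d}{ds}\bar p_0^2 + (2\gamma+\tilde\gamma)\bar p_0^2$ at $x=0$; summing and integrating yields $|\int_0^t \Phi\,\dd s| \lesssim 1/n$ provided $\int_0^t \bar p_0^2 \,\dd s \lesssim 1/n$, a boundary dissipation bound extracted from the energy identity $\tfrac{d}{ds}\big\{\sum \bar r_x^2+\sum \bar p_x^2\big\} = 2n^2\bar\tau_+ \bar p_n - 4\gamma n^2 \sum_{x\in\T_n^\circ}\bar p_x^2 - 2(2\gamma+\tilde\gamma) n^2(\bar p_0^2+\bar p_n^2)$ combined with Proposition \ref{lm010911-19a}; hence $2\gamma n^2|\int \Phi| \lesssim n$. (iii) The same identity applied only at $x=0$ gives $|\int_0^t \bar p_0 \bar r_1\,\dd s| \lesssim 1/n$, so $\tilde\gamma n^2|\int \bar p_0 \bar r_1| \lesssim n$. (iv) $n^2\int_0^t \sum(\bar p_{x+1}-\bar p_x)^2\,\dd s \le 4 n^2 \int_0^t \sum \bar p_x^2\,\dd s \lesssim n$ by Proposition \ref{cor013112-19z}. (v) Writing $|\bar p_{n-1}(\bar p_n-\bar p_{n-1})| \le \tfrac{1}{2}\bar p_{n-1}^2 + \tfrac{1}{2}(\bar p_n-\bar p_{n-1})^2$ together with $\bar p_{n-1}^2 \le 2\bar p_n^2 + 2(\bar p_{n-1}-\bar p_n)^2$ reduces the last piece to (iv) and the boundary dissipation $n^2 \int \bar p_n^2 \lesssim n$. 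Collecting and dividing by $n$ gives \eqref{060301-20}.

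The main obstacle is the cross-boundary term $\tilde\gamma n^2\int_0^t \bar p_0 \bar r_1\,\dd s$: a naive Cauchy--Schwarz bound $|\int \bar p_0 \bar r_1| \le (\int \bar p_0^2)^{1/2}(\int \bar r_1^2)^{1/2} \lesssim 1/\sqrt n$ (using $\int \bar r_1^2 \lesssim 1$ from Lemma \ref{lem:bound2}(v)) yields only $n^{3/2}$, short by a factor $\sqrt n$. The key device is to use the boundary dynamics to rewrite $\bar p_0 \bar r_1$ as a total time derivative plus a nonnegative quadratic in $\bar p_0$, the damping rate $(2\gamma+\tilde\gamma)$ absorbing precisely the loss. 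This same trick, applied termwise, also underlies the identity for $\int \Phi$ in (ii) and is in fact the backbone of the whole argument.
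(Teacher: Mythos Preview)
Your proof is correct and follows essentially the same approach as the paper's own proof in Section~\ref{sec6.5}: both extract the quantity $\sum_x \bar p_x(\bar r_{x+1}-\bar r_x)$, isolate the gradient-squared via the momentum equations, and control the boundary cross terms $\int \bar p_0 \bar r_1$ and $\int \bar p_n \bar r_n$ by rewriting them through the boundary dynamics as a total time derivative plus a nonnegative dissipative term, ultimately invoking Propositions~\ref{lm010911-19a} and~\ref{cor013112-19z}. Your organization via the flux observable $\Phi$ is slightly more direct (the paper reaches the same identity by first differentiating $\sum_x \bar r_x^2$, summing by parts, and then integrating by parts in time), and your inclusion of the $x=0$ summand in $\Phi$ handles the boundary gradient $\bar r_1^2$ on the same footing as the bulk, whereas the paper's displayed identity covers only $x=1,\dots,n-1$.
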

The proof of the lemma uses Proposition \ref{lm010911-19a}, and  is postponed to Section
\ref{sec6.5}.

 Define    $\bar r^{(n)}_{\rm int}:[0,+\infty)\times\T\to\R$ as the function obtained by
 the piecewise linear interpolation between the nodal points $(x/(n+1),\bar r_x)$,
 $x=0,\ldots,n+1$.  
 Here we let $\bar r_{n+1}:=\bar r_n$. As a consequence of Lemma \ref{lm010301-20} above we obtain 
 
 \begin{lemma}
\label{lm020401-20}
For any $t \ge 0$ we have
\begin{equation}
\label{030401-20}
\sup_{n\ge1}\int_0^{t}\big\| \bar r^{(n)}_{\rm int}(s,\cdot)\big\|_{H^1(\T)}^2\dd
s=\mathfrak{h}(t)<+\infty,
\end{equation}
where $H^1(\T)$ is the $H^1$ Sobolev norm: 
$
\|F\|_{H^1(\T)}^2:=\|F\|_{L^2(\T)}^2+ \|F'\|_{L^2(\T)}^2.
$ Moreover,
\begin{equation}
\label{090401-20z}
\lim_{n\to+\infty}\sup_{u\in\T}\left|\int_0^t \bar r^{(n)}_{\rm int}(s,u)\dd s-\int_0^t  r(s,u)\dd s\right|=0.
\end{equation}
\end{lemma}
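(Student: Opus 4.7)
The strategy splits naturally into two steps. First I would establish the $H^1$-bound \eqref{030401-20} by separately controlling the $L^2$-norm and the gradient $L^2$-norm of the piecewise linear interpolant. Second, I would derive the uniform convergence \eqref{090401-20z} from \eqref{030401-20} via a one-dimensional Sobolev compactness argument combined with the weak convergence already proved in Theorem \ref{hydro1}.

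For the first step, the $L^2$-bound on $\bar r^{(n)}_{\rm int}(s,\cdot)$ follows at once from \eqref{053112-19z}: a piecewise linear function on a uniform mesh of step $1/(n+1)$ with nodal values $\bar r_x^{(n)}(s)$ has $L^2(\T)$-norm controlled (via the elementary inequality $\int_0^\ell (\text{linear}(u))^2 \dd u \le \ell(a^2+b^2)$ on a cell of length $\ell$ with endpoint values $a,b$) by $\frac{C}{n+1}\sum_{x\in\T_n}(\bar r_x^{(n)}(s))^2$, the contribution of the last cell being zero since $\bar r_{n+1}:=\bar r_n$. For the derivative part, on each interval $(x/(n+1),(x+1)/(n+1))$ the interpolant has constant slope $(n+1)(\bar r_{x+1}^{(n)}(s)-\bar r_x^{(n)}(s))$, whence
\begin{equation*}
\|(\bar r^{(n)}_{\rm int})'(s,\cdot)\|_{L^2(\T)}^2 = (n+1)\sum_{x=0}^{n-1}\bigl(\bar r_{x+1}^{(n)}(s)-\bar r_x^{(n)}(s)\bigr)^2,
\end{equation*}
the $x=n$ term dropping out. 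Integrating in $s\in[0,t]$ this coincides, up to an inessential factor, with the quantity controlled by Lemma \ref{lm010301-20}, giving \eqref{030401-20}.

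For the second step, set $R_n(u):=\int_0^t\bar r^{(n)}_{\rm int}(s,u)\dd s$ and $R(u):=\int_0^t r(s,u)\dd s$. By Cauchy--Schwarz combined with \eqref{030401-20}, $\|R_n\|_{H^1(\T)}^2\le t\,\mathfrak h(t)$, so $\{R_n\}_{n\ge1}$ is bounded in $H^1(\T)$. The compact embedding $H^1(\T)\hookrightarrow C(\T)$ in one dimension yields precompactness of $\{R_n\}$ in $C(\T)$; let $\tilde R\in C(\T)$ be any uniform subsequential limit. To identify $\tilde R=R$, I invoke Theorem \ref{hydro1}, which provides $\star$-weak convergence of $\bar r^{(n)}(\cdot)$ to $r$ in $L^\infty([0,t];L^2(\T))$. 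Comparing piecewise constant and piecewise linear interpolants cell-wise gives $\|\bar r^{(n)}(s,\cdot)-\bar r^{(n)}_{\rm int}(s,\cdot)\|_{L^2(\T)}^2\lesssim \frac{1}{n+1}\sum_{x=0}^{n-1}(\bar r_{x+1}^{(n)}(s)-\bar r_x^{(n)}(s))^2$, and a second application of Lemma \ref{lm010301-20} shows this is $o_n(1)$ after integration over $s\in[0,t]$. Testing against an arbitrary $\phi\in L^2(\T)$ then yields $\langle R_n,\phi\rangle\to\langle R,\phi\rangle$, i.e.~$R_n\rightharpoonup R$ weakly in $L^2(\T)$; since uniform convergence implies weak $L^2$ convergence, $\tilde R=R$, so every subsequential uniform limit coincides with $R$ and the whole sequence converges uniformly.

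The only delicate point I foresee is the need to pass between the piecewise constant field $\bar r^{(n)}(s,\cdot)$ on which Theorem \ref{hydro1} provides weak convergence and the piecewise linear interpolant $\bar r^{(n)}_{\rm int}(s,\cdot)$ which carries the $H^1$-regularity; as indicated, this comparison is essentially routine and controlled by Lemma \ref{lm010301-20}. The remainder of the argument is a direct Rellich-type compactness extraction, and no other substantial obstacle is expected.
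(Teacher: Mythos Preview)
Your proposal is correct and follows essentially the same approach as the paper: bound the $L^2$-part via Proposition~\ref{cor013112-19z} and the gradient part via Lemma~\ref{lm010301-20}, then use the compact embedding $H^1(\T)\hookrightarrow C(\T)$ together with the weak $L^2$-convergence from Theorem~\ref{hydro1} (transferred from the piecewise constant to the piecewise linear field via the same Lemma~\ref{lm010301-20}) to identify the uniform limit. The paper's proof is terser---it packages the $L^2$-comparison between $\bar r^{(n)}$ and $\bar r^{(n)}_{\rm int}$ into the single identity \eqref{tbrn} with exact constant $1/(3(n+1))$---but the logical structure is the same.
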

 
 \begin{proof} 
 It is easy
 to see that
\begin{equation}
\label{tbrn}
\big\|\bar r^{(n)}_{\rm int}(t,\cdot)-\bar r^{(n)}(t,\cdot)\big\|_{L^2(\T)}^2=\frac{1}{3(n+1)}\sum_{x=0}^{n-1}\big(\bar r_{x+1}(t)-\bar r_x(t)\big)^2,
 \qquad n\ge1.
\end{equation}
Thanks to \eqref{060301-20} 
we obtain  \eqref{030401-20}. 
Using  \eqref{tbrn} we also get
\begin{equation}
\label{020401-20xx}
\lim_{n\to+\infty}\int_0^t\big\|\bar r^{(n)}_{\rm int}(s,\cdot)-\bar r^{(n)}(s,\cdot)\big\|_{L^2(\T)}^2 \;\dd s=0,\qquad t>0.
\end{equation}
From the proof of Theorem \ref{hydro1} given in Section \ref{sec11.2.1} we know
that the sequence
$\int_0^t \bar r^{(n)}_{\rm int}(s,u)\dd s$ weakly converges in
$L^2(\T)$ to $\int_0^t  r(s,u)\dd s$. From \eqref{030401-20}
and the compactness of Sobolev embedding into $C(\T)$ in dimension
$1$ we conclude \eqref{090401-20z}.
\end{proof}

Thanks to \eqref{030401-20} we know that
for any $t_*>0$ we have
\begin{equation}
\label{010203x}
\sup_{s\in[0,t_*]}\big\|[\bar
r^{(n)}_{\rm int}]^2 (s,\cdot)\big\|_{L^1(\T)}\lesssim 1,\qquad n\ge 1.
\end{equation}
The above implies that  the sequence $\{[\bar
  r^{(n)}_{\rm int}]^2 (\cdot)\}_{n\ge1}$ is sequentially $\star-$weakly
compact in $\left(L^1([0,t_*];C(\T))\right)^\star$. One can choose a
subsequence, that for convenience sake we denote by the same symbol, which is $\star-$weakly
convergent in any $\left(L^1([0,t_*];C(\T))\right)^\star$, $t_*>0$.
We prove now that for any  $G\in
L^1([0,t_*];C^2_{0}(\T))$ we have
\begin{equation}
\label{060401-20}
\lim_{n\to+\infty}\int_0^{t_*}\dd t\int_\T\big(\bar
r^{(n)}_{\rm int} (t,u)\big)^2G(t,u)\dd u= \int_0^{t_*}\dd t\int_\T
r^2 (t,u)G(t,u)\dd u,
\end{equation}
where $r(\cdot)$ is the solution of \eqref{eq:linear}--\eqref{eq:bc0}. By a density
argument is suffices only to consider functions of the form
$G(t,u)=\mathbf{1}_{[0,t_*)}(t)G(u)$, where $G\in C^2_{0}(\T)$, $t_*>0$.
To prove \eqref{060401-20} it suffices to show that
\begin{equation}
\label{060401-20bis}
\lim_{n\to+\infty}\frac{1}{n+1}\int_0^{t_*}\dd t\sum_{x\in\T_n}\left\{[\bar
r^{(n)}_x (t)]^2-r^2\left(t,\tfrac{x}{n+1}\right)\right\}G_x=0.
\end{equation}
Let $M\ge 1$ be an integer, that shall be specified later on, and 
$t_\ell:=\ell t_*/M$, for $\ell=0,\ldots,M$. 
The expression under the limit in \eqref{060401-20bis}
can be rewritten as $B_n^1(M)+B_n^2(M)+o_n(1)$, where $o_n(1)\to0$, as $n\to+\infty$, and
\begin{align}
B_n^1(M)& :=\frac{1}{n+1}\sum_{\ell=0}^{M-1}\sum_{x\in\T_n}G_x\bigg\{  \bar
r^{(n)}_x (t_\ell)\int_{t_\ell}^{t_{\ell+1}} \bar
r^{(n)}_x (t) \dd t  -r\left(t_\ell,\tfrac{x}{n+1}\right)
  \int_{t_\ell}^{t_{\ell+1}}r\left(t,\tfrac{x}{n+1}\right) \dd t\bigg\}\notag\\
B_n^2(M)&:=\frac{1}{n+1}\sum_{\ell=0}^{M-1}\sum_{x\in\T_n}G_x\int_{t_\ell}^{t_{\ell+1}}  \bar
r^{(n)}_x (t) \Big\{\int_{t_\ell}^t  \frac{\dd}{\dd s}\bar
r^{(n)}_x (s)\dd s \Big\}  \dd t \notag\\
&
=\frac{n^2}{n+1}\sum_{\ell=0}^{M-1}\sum_{x\in\T_n}G_x \int_{t_\ell}^{t_{\ell+1}}\bar
r^{(n)}_x (t) \int_{t_\ell}^t\Big(\bar
p^{(n)}_x (s)-\bar  p^{(n)}_{x-1} (s)\Big) \dd s  \dd t  .
\label{B-n}
\end{align}
The last equality follows from \eqref{eq:qdynamicsbulk}--\eqref{eq:qdynamicsbound}. 
In what follows we prove that
\begin{equation}
\label{010502-20}
\lim_{M\to+\infty}\limsup_{n\to+\infty}|B_n^j(M)|=0,\qquad j=1,2.
\end{equation}
Summing by parts in the utmost right hand side of \eqref{B-n} we
obtain 
$$
B_n^2(M)=\sum_{j=1}^3B_{n,j}^2(M),
$$
where
\begin{align*}
B_{n,1}^2&:=\frac{n^2}{n+1}\sum_{\ell=0}^{M-1}\sum_{x=1}^{n-1} \int_{t_\ell}^{t_{\ell+1}} \big(G_x\bar
r^{(n)}_x (t) -G_{x+1}\bar
r^{(n)}_{x+1} (t)\big)\int_{t_\ell}^t  \bar
p^{(n)}_x (s)\dd s  \dd t,  \\
B_{n,2}^2&:=\frac{n^2}{n+1}\sum_{\ell=0}^{M-1} \int_{t_\ell}^{t_{\ell+1}} G_n\; \bar
r^{(n)}_n (t) \left\{\int_{t_\ell}^t  \bar
p^{(n)}_n (s) \dd s  \right\} \dd t, \\
B_{n,3}^2&:=-\frac{n^2}{n+1}\sum_{\ell=0}^{M-1} \int_{t_\ell}^{t_{\ell+1}} G_1\; \bar
r^{(n)}_1 (t)\left\{ \int_{t_\ell}^t  \bar
p^{(n)}_0 (s) \dd s \right\} \dd t.
\end{align*}
We have $B_{n,1}^2=B_{n,1,1}^2+B_{n,1,2}^2$, where
\begin{align*}
B_{n,1,1}^2&:=-\frac{n}{n+1}\sum_{\ell=0}^{M-1}\sum_{x=1}^{n-1} \int_{t_\ell}^{t_{\ell+1}} \bar
r^{(n)}_x (t) (\nabla_nG)_x \; \left\{\int_{t_\ell}^t\bar
p^{(n)}_x (s)  \dd s\right\} \dd t ,\\
 B_{n,1,2}^2&:=\frac{n^2}{n+1}\sum_{\ell=0}^{M-1}\sum_{x=1}^{n-1} \int_{t_\ell}^{t_{\ell+1}} G_{x+1}\big(\bar
 r^{(n)}_x (t)-\bar
 r^{(n)}_{x+1} (t)\big)\; \left\{\int_{t_\ell}^t\bar
 p^{(n)}_x (s)\dd s  \right\}\dd t.
\end{align*}
By the Cauchy-Schwarz inequality we  bound $|B_{n,1,2}^2|$ from above by
\begin{align*}
& \frac{n^2\|G\|_{\infty}}{n+1}\left\{ \sum_{x=1}^{n-1}\int_{0}^{t_*}\big(\bar
r^{(n)}_{x+1} (t)-\bar
r^{(n)}_x (t)\big)^2 \dd t\right\}^{1/2}\left\{ \sum_{\ell=0}^{M-1}\sum_{x=1}^{n-1}\int_{t_\ell}^{t_{\ell+1}}\left\{\int_{t_\ell}^t\bar
p^{(n)}_x (s)\dd s \right\}^2 \dd t \right\}^{1/2}\\
&
\lesssim \left\{ (n+1)\sum_{x=1}^{n-1}\int_{0}^{t_*}\big(\bar
r^{(n)}_{x+1} (t)-\bar
r^{(n)}_x (t)\big)^2 \dd t\right\}^{1/2}\left\{ n\sum_{\ell=0}^{M-1}\sum_{x=1}^{n-1}\int_{t_\ell}^{t_{\ell+1}}\left\{\int_{t_\ell}^t\bar
p^{(n)}_x (s)\dd s \right\}^2 \dd t \right\}^{1/2}\\
&
\lesssim \left\{ n\sum_{\ell=0}^{M-1}\sum_{x=1}^{n-1}\int_{t_\ell}^{t_{\ell+1}}(t-t_\ell) \int_{t_\ell}^t
\big(\bar p^{(n)}_x (s)\big)^2\dd s  \dd t \right\}^{1/2} \lesssim \left\{\frac{n}{M^2} \sum_{x=1}^{n-1} \int_0^t \big(\bar p^{(n)}_x (s)\big)^2\dd s \right\}^{1/2} 
\end{align*}
by virtue of Lemma \ref{lm010301-20}. Using Proposition \ref{cor013112-19z}  (estimate
\eqref{010301-20}), we conclude  $|B_{n,1,2}^2| \lesssim 1/M$ and $\lim_{M\to+\infty}\limsup_{n\to+\infty}|B_{n,1,2}^2|=0$.
The argument for $|B_{n,1,1}^2|$ is
analogous. As a result we conclude $\lim_{M\to+\infty}\limsup_{n\to+\infty}|B_{n,1}^2|=0$.

Concerning $B_{n,2}^2$ we write
\begin{align*}
B_{n,2}^2=-\frac{n}{n+1}\sum_{\ell=0}^{M-1} \int_{t_\ell}^{t_{\ell+1}}(\nabla_nG)_n\; \bar
r^{(n)}_n (t) \left\{\int_{t_\ell}^t\bar
p^{(n)}_n (s)\dd s \right\} \dd t.
\end{align*}
Therefore,  we have
\begin{align*}
|B_{n,2}^2| & \lesssim\sum_{\ell=0}^{M-1} \int_{t_\ell}^{t_{\ell+1}}\big|\bar
r^{(n)}_n (t)\big| \; \left\{\int_{t_\ell}^t\big|\bar
p^{(n)}_n (s)\big|\dd s \right\}  \dd t
\\ & \lesssim  \left\{\int_{0}^{t_*}|\bar
r^{(n)}_n (t)| \dd t \right\}\; \left\{ t^\star \int_{0}^{t_*}\big(\bar
p^{(n)}_n (s)\big)^2 \dd s\right\}^{1/2} \lesssim \frac{1}{\sqrt n},
\end{align*}
by virtue of Lemma \ref{lem:bound2}--\eqref{eq:13} and Proposition \ref{cor013112-19z} (estimate
\eqref{010301-20}).
We conclude therefore that $\lim_{M\to+\infty}\limsup_{n\to+\infty}|B_{n,2}^2|=0$.
An analogous argument shows that also
$\lim_{M\to+\infty}\limsup_{n\to+\infty}|B_{n,3}^2|=0$. Thus,
\eqref{010502-20} holds for $j=2$.


We have $B^1_{n}(M)=
B_{n,1}^1(M)+ B_{n,2}^1(M)$, where
\begin{align*}
&
B_{n,1}^1(M):=\frac{1}{n+1}\sum_{\ell=0}^{M-1}\sum_{x\in\T_n}G_x\bar
r^{(n)}_x (t_\ell) 
\int_{t_\ell}^{t_{\ell+1}}\left[r \left(t_{\ell},\tfrac{x}{n+1}\right) -r\left(t,\tfrac{x}{n+1}\right) \right]\dd t,\\
&
B_{n,2}^1(M) :=\frac{1}{n+1}\sum_{\ell=0}^{M-1}\sum_{x\in\T_n}G_x\bar
r^{(n)}_x (t_\ell) \int_{t_\ell}^{t_{\ell+1}}\Big[\bar
r^{(n)}_x (t)- r \left(t_{\ell},\tfrac{x}{n+1}\right)\Big]\dd t.
\end{align*}
where $r$ is the solution of \eqref{eq:linear}--\eqref{eq:bc0}. 
By the regularity of the
 $r(t,u)$, Lemma \ref{lm020401-20} and estimate \eqref{010203x} we can
 easily conclude that
$\lim_{M\to+\infty}\limsup_{n\to+\infty}|B_{n,j}^1|=0$, $j=1,2$. Thus,
\eqref{010502-20} holds also for $j=1$, which ends the proof of
Theorem \ref{hydro2}.

\subsection{Proof of Theorem  \ref{main-result}}

\label{sec13.2.2}
 Concerning equation \eqref{eq:linear2}--\eqref{eq:bc2}, its weak
  formulation is as follows:
for any   test function $G\in L^1([0,+\infty);C_{0}^2(\T))$  which is compactly supported,
we have
\begin{align}
\label{022308-19}
 0= & \;  
 \int_\T  G(0,u) e_0(u)\dd u   +
 \int_0^{+\infty}  \int_\T   \Big(\partial_sG(s,u) + \frac1{4\ga}
    \partial_u^2G(s,u) \Big)\; e(s,u)\dd s\dd u  \notag \\
&
+ \frac {1}{8\ga}\int_0^{+\infty}  \int_\T \partial_u^2 G(s,u)
   r^2(s,u) \dd s \dd u  \notag \\
   & - \frac{1}{4\ga}\int_0^{+\infty}  \left(\partial_uG(s,1) \left( T_+
       + \bar\tau_+^2\right)
     -T_-\;\partial_uG(s,0) \right)\dd s.
  \end{align}
Given a non-negative initial data $e_0\in L^1(\T)$ and
the macroscopic stretch $ r(\cdot,\cdot)$ (determined via \eqref{eq:wlinear}) one can easily
    show that the respective weak formulation of the boundary value
    problem for a linear heat equation, resulting from \eqref{022308-19},
    admits a unique measure valued solution.
 
Recall that the  averaged energy density function $\bar{ \mathcal E}^{(n)}(t,u)$
has been defined in \eqref{En0}. 
It is easy to see, thanks to Proposition \ref{prop022111-19}, that for any $t_*>0$ 
we have
$$
\mathbf{E}(t_*):=\sup_{n\ge1}\sup_{t\in[0,t_*]}\big\|\bar{ \mathcal E}^{(n)}(t)\big\|_{L^1(\T)}<+\infty.
$$
Thus
the sequence
\begin{equation}
\label{seq}
E_n(t):=\int_0^t\bar{ \mathcal E}^{(n)}(s)\dd s,\qquad n\ge1,\,t\in[0,t_*]
\end{equation}
lies in the space $C\big([0,t_*],{\cal M}(\mathbf{E}(t_*))$, where ${\cal M}(\mathbf{E}(t_*))$ is the space of all Borel measures on $\T$ with mass
less than, or equal to, $\mathbf{E}(t_*)$, equipped with the
topology of weak convergence of measures. Since $\T$ is compact, the
space ${\cal M}(\mathbf{E}(t_*))$ is compact and metrizable. The  sequence \eqref{seq} 
is equicontinuous in the space $C\big([0,t_*];{\cal M}(\mathbf{E}(t_*))\big)$, therefore it is sequentially compact by virtue of the Ascoli-Arzel\`a Theorem, see e.g.~\cite[p.~234]{kelley}.  

Suppose that   $E(\cdot)  \in C\big([0,t_*],{\cal M}(\mathbf{E}(t_*))\big)$ is the limiting point of $\{E_n\}_{n\geq 1}$, as
$n\to+\infty$.
We shall show that for any
$G$ as in \eqref{022308-19} we have
\begin{align}
\label{022308-19x} 
 \int_0^t &\dd s \int_{\T}G(s,u)E(s,\dd u) \notag  =\int_0^{t}\dd s\int_{\T}\partial_s G(s,u)E(s,\dd u ) +\int_{\T} G(0,u)e_0(u)\dd u\notag \\
&
+
\frac1{4\ga} \int_0^{t} \dd s  \int_{\T} 
    \partial_u^2G(s, u) E(s,\dd u)+ \frac {1}{8\ga}\int_0^{t} \dd s \int_{\T}  \dd u\; \partial_u^2 G(s,u)
\Big( \int_0^s  r^2(\sigma,u)\dd \sigma\Big) \notag \\
   & - \frac{1}{4\ga} \int_0^ts\Big(\partial_uG(s,1) \left( T_+
       + \bar\tau_+^2\right)
     -T_-\partial_uG(s,0) \Big)\dd s,\qquad t\in[0,t_*].
  \end{align}
This identifies the limit $E$ of $\{E_n\}$ as a function
$E:[0,+\infty)\times\T\to\R$ that is
the unique solution of
the problem
\begin{align}
   \partial_t E (t,u)& =  \frac 1{4\ga}  \partial^2_{uu}
   \left\{  E(t,u) +
       \frac 12\; \int_0^tr^2(s,u)\dd s\right\}+e_0(u),\quad
        (t,u)\in \R_+\times \T ,
\label{eq:linear2x}
      \end{align}
with the boundary conditions
\begin{equation}
  \label{eq:bcx}
  \begin{split}
   &E (t,0) = T_- t, \qquad \quad  E(t,1) = \left(T_+ +
     \frac12\bar\tau_+^2\right)t,\qquad t\ge0
  \end{split}
\end{equation}
and
 the initial condition $E(0,u) =0$. Here $r(t,u)$ is the solution of   \eqref{eq:linear}.

Concerning the limit identification for $\{\bar {\cal
    E}^{(n)}\}_{n\ge1}$ we write
\begin{align*}
\int_0^{t}\int_{\T}\bar {\cal E}^{(n)}(s,u) G(s,u)\dd u \dd s= \int_{\T}E_n(t,u)G(t,u) \dd u-\int_0^{t}\int_{\T}E_n(s,u)\partial_s G(s,u)\dd u \dd s,
\end{align*}
and, by passing to the limit $n\to\infty$, we get that the left hand side converges to 
\begin{align*}
 \int_{\T} E(t,u)G(t,u) \dd u
-\int_0^{t}\int_{\T}E(s,u)\partial_s G(s,u)\dd u \dd s
   =\int_0^{t}\int_{\T}\partial_s E(s,u)G(s,u)\dd
  u \dd s.
\end{align*}
Hence, any   $\star-$weak limiting point 
$e\in (L^1([0,t_*];C_{0}^2(\T)))^\star$ of the sequence $\{\bar {\cal
    E}^{(n)}\}_{n\ge1}$
is 
given by $e(t,u)=\partial_t E(t,u)$, which in turn satisfies
\eqref{022308-19} and Theorem \ref{main-result} would then follow. Therefore one is left with proving \eqref{022308-19x}.



 Consider now a smooth test function $G\in C^\infty([0,+\infty)\times
\T)$  such that $G(s,0) = G(s,1) \equiv 0$, $s\ge0$.
Then,  from \eqref{eq:en-evol}, we get 
\begin{align*}
  \int_{\T} G (t,u)\dd u& \bar{\mathcal E}^{(n)}(t,u)   - \int_{\T} G (0,u)
   \bar{\mathcal E}^{(n)} (0,u) \dd u\\
&
=\frac{1}{n}
   \sum_{x\in\T_n} \mathbb E_n\left[  G_x (t) \mathcal E_x(t) \right]- \frac{1}{n}
  \sum_{x\in\T_n} \mathbb E_n\left[  G_x (0) \mathcal E_x(0) \right]
   +o_n(1)\\
&  =    \frac{1}{n} \sum_{x=1}^{n-1}
 \int_0^{t} \mathbb E_n\left[ \partial_s\big(G_x (s){\mathcal
  E}_x(s)\big) \right] \dd s +o_n(1)
=:\mathrm{I}_n+\mathrm{II}_n+o_n(1),
\end{align*}
where $o_n(1)\to0$, as $n\to+\infty$ and 
\begin{align*}
&
\mathrm{I}_n:=\frac{1}{n} \int_0^{t}  
   \sum_{x=0}^{n-1} \E_n\left[  \partial_sG_x (s) \mathcal E_x ^{(n)} (s)
      \right] \dd s=\int_0^{t}\int_{\T} \partial_s G (s,u) \bar{\mathcal E}^{(n)} (s,u) \dd u \dd s +o_n(1),\\
&
  \mathrm{II}_n:=\frac{1}{n} \int_0^{t} 
  \sum_{x=0}^{n-1}   \E_n\left[ G_x (s) \partial_s\mathcal E_x ^{(n)} (s)
      \right] \dd s .
\end{align*}
Thanks to \eqref{eq:en-evol}, and after an integration by parts, we write
 \begin{equation}
\label{eq:cal0}
\mathrm{II}_n = \frac{1}{n} \int_0^{t} \sum_{x=1}^{n-1} \E_n\left[  G_x (s) 
      \big(j_{x-1,x}- j_{x,x+1}\big) (s)\right]  \dd s = \mathrm{II}_{n,1} + \mathrm{II}_{n,2},\end{equation}
      with \begin{align*}
\mathrm{II}_{n,1} &:= \int_0^{t} \sum_{x=1}^{n-2} \mathbb E\big[
       (\nabla_n G)_x(s)\; j_{x,x+1}(s) \big]  \dd s,\\
\mathrm{II}_{n,2} &:=\int_0^{t}  \mathbb E\big[- nG_{n-1}(s) j_{n-1,n}(s)
      + nG_1(s) j_{0,1}(s) \big]\dd s.
              \end{align*}
By Lemma \ref{lem:bound2}--\eqref{eq:11}, we conclude that $\mathrm{II}_{n,2}=o_n(1)$.

By a direct calculation we conclude the following \emph{fluctuation-dissipation relation} for the microscopic currents:
\begin{equation}
  \label{eq:fd1-n}
  j_{x,x+1} = n^{-2} L g_x - (V_{x+1}-V_x), 
\qquad x\in\T_n^{\rm o},
\end{equation}
 with
  \begin{equation} \label{eq:defg}
   g_x:=-\frac14p_x^2 + \frac{1}{4\gamma}p_x(r_x+r_{x+1}),\qquad
 V_x:= \frac 1{4 \gamma}  \big(r_x^2 +p_xp_{x-1}\big) .
\end{equation}
Using the notation $g_x(t):=g_x({\bf r}(t),{\bf p}(t))$ (and similarly for other local functions), this allows us to write $\mathrm{II}_{n,1} =\sum_{j=1}^4\mathrm{II}_{n,1,j}
$, where 
\begin{align*}
&
 \mathrm{II}_{n,1,1} := \int_0^{t}  \frac 1{n}  \sum_{x=2}^{n-2}
                                                 {\mathbb E}_n\big[  (\Delta_n G)_x (s)V_x(s) 
                                                 \big]  \dd s\;  ,\\
&\mathrm{II}_{n,1,2} :=\int_0^{t}  \frac 1{n^2}
  \sum_{x=1}^{n-2}{\mathbb E}_n \big[  (\nabla_n G)_x(s)  L g_x(s)\big] \dd s , \\
&
\mathrm{II}_{n,1,3} := -\int_0^{t} \dd s\; {\mathbb E}_n\big[  (\nabla_n G)_{n-2} (s)V_{n-1}(s) 
                                                 \big]  ,\\
&
\mathrm{II}_{n,1,4} := \int_0^{t} \dd s\; {\mathbb E}_n\big[ 
                                                   (\nabla_n G)_1(s)V_1(s)\big] .
\end{align*}
We have
\begin{align*}
\mathrm{II}_{n,1,2}= & \;  \frac 1{n^2} \sum_{x=1}^{n-2}  (\nabla_n G)_x(t)
  \mathbb E_n\left[ g_x(t)\right]-\frac 1{n^2} \sum_{x=1}^{n-2}
  (\nabla_n G)_x(0) \mathbb E_n\left[ g_x(0)\right]\\
&
-\int_0^{t}\frac 1{n^2} \sum_{x=1}^{n-2} (\nabla_n \partial_sG)_x(s) \mathbb E_n \left[ g_x(s)\right]\dd s,
  \end{align*}
which {tends to zero}, thanks to Proposition \ref{prop022111-19}.
By Lemma \ref{lem:bound2}--\eqref{ex-p0p1} and \eqref{ex-2-l11t}, we have
\begin{align}
  \label{eq:6}
    \lim_{n\to\infty}  \mathrm{II}_{n,1,4}&=\lim_{n\to\infty}   \int_0^t {\mathbb E}_n\big[ (\nabla_n G)_1(s) V_1(s) \big] \dd s
    =\frac{1}{4\ga} \lim_{n\to\infty}  \int_0^t  \partial_u
      G(s,0)\E_n\big[r_1^2(s)\big] \dd s\notag\\
&
=\frac{T_-}{4\ga} \int_0^t  \partial_u G(s,0) \dd s,
\end{align}
which takes care of the left boundary condition. Concerning the right
one:
\begin{align}
  \label{eq:6aa} 
\mathrm{II}_{n,1,3} =-  \int_0^t   {\mathbb E}_n\big[ (\nabla_n G)_{n-2} (s)
     V_{n-1}(s)\big]  \dd s\;  =J_{n,1}+J_{n,2} ,
\end{align}
where 
\begin{align*}
J_{n,1}&:= \int_0^t  {\mathbb E}_n\big[   (\nabla_n G)_{n-2} (s) (V_n-V_{n-1})(s)
 \big] \dd s, \\
J_{n,2}&:= -\int_0^t {\mathbb E}_n\big[  (\nabla_n G)_{n-2} (s)
     V_{n}(s) \big] \dd s.
  \end{align*}
By virtue of Lemma \ref{lem:bound2}--\eqref{ex-2-r11t} we have
\[
\lim_{n\to+\infty}  J_{n,2}= - \lim_{n\to+\infty}  {\mathbb E}_n\left[ \int_0^t  (\nabla_n G)_{n-2} (s)
     V_{n}(s)\dd s \right]
 = -\frac{\left(T_+
     + \bar\tau_+^2\right)}{4\ga} \int_0^t  \partial_u G(s,1) \dd s.
\]
On the other hand, using \eqref{eq:fd1-n} for $x=n-1$, the term $J_{n,1}$
equals
\begin{equation}
  \label{eq:6b}
  \begin{split}  
 J_{n,1}=  -\frac{1}{n^2} {\mathbb E}_n\left[ \int_0^t   (\nabla_n
     G)_{n-2} (s) L g_{n-1}(s) \dd s
 \right]
&+ \int_0^t  (\nabla_n G)_{n-2} (s)
  {\mathbb E}_n  \big[j_{n-1,n}(s)\big] \dd s.
  \end{split}
\end{equation}
From Lemma \ref{lem:bound2}--\eqref{eq:11} we conclude that the second term {tends to zero}, with
$n\to+\infty$. By integration by parts the first term equals
\begin{multline}
  \label{eq:6c}
  \frac{1}{n^2} {\mathbb E}_n\Big[(\nabla_n
  G)_{n-2} (0) g_{n-1}(0)- (\nabla_n
  G)_{n-2} (t) g_{n-1}(t) 
 \Big]  \\  \qquad  +\frac{1}{n^2} {\mathbb E}_n\left[ \int_0^t  (\nabla_n
     \partial_s G)_{n-2} (s) g_{n-1}(s)  \dd s
 \right],
\end{multline}
which {tends to zero}, thanks to Proposition \ref{prop022111-19}. Summarizing, we have shown that
\begin{align}
  \label{eq:6d} 
&  \lim_{n\to+\infty} \mathrm{II}_{n,1,3}   =-\lim_{n\to+\infty} {\mathbb E}_n\left[ \int_0^t (\nabla_n G)_{n-2} (s)
     V_{n-1}(s)   \dd s \right] \notag\\
     &  =
 - \frac{\left(T_+
     + \bar\tau_+^2\right) }{4\ga}\int_0^t  \partial_u G(s,1) \dd s .
  \end{align}
Now, for the bulk expression $\mathrm{II}_{n,1,1}$, we  write
$
\mathrm{II}_{n,1,1} = {\cal J}_{n,1}+{\cal J}_{n,2},
$
where
\begin{align*}
{\cal J}_{n,1}&:=\frac{1}{4\ga}\int_0^{t}  \frac 1{n}  \sum_{x=2}^{n-2} {\mathbb E}_n\big[
                                                  (\Delta_n G)_x (s)r_x^2(s)
                                                 \big] \dd s\;, \\
{\cal J}_{n,2}&:=\frac{1}{4\ga}\int_0^{t}   \frac 1{n}  \sum_{x=2}^{n-2}{\mathbb E}_n\big[
                                                  (\Delta_n G)_x (s)p_x(s)p_{x-1}(s)
                                                 \big] \dd s\;.
\end{align*}

\subsubsection{Estimates of ${\cal J}_{n,2}$}


After a direct calculation, it follows from \eqref{eq:generator} that
\begin{equation}
\label{033108-19}
n^{-2} L h_x = (W_{x+1}-W_x)- p_x p_{x-1},\qquad x=2,\ldots,n-2,
\end{equation}
with
\begin{align*}
&h_x:=\frac{1}{2\ga}
\left(\frac12(r_x +r_{x-1})^2 +p_{x-1}p_{x} -r_x^2\right),\\
&
W_x:=\frac{1}{2\ga}p_{x-2}(r_{x-1}+r_x).
\end{align*}
Substituting into the expression for ${\cal J}_{n,2}$ we conclude that $
  {\cal J}_{n,2}=K_{n,1}+K_{n,2},$
where $K_{n,1}$ and $K_{n,2}$ correspond to $(W_{x+1}-W_x)$ and $n^{-2}Lh_x$,
respectively.
Using the summation by parts to deal with $K_{n,1}$,
performing time integration in the case of  $K_{n,2}$ and subsequently 
invoking the energy bound from Proposition \ref{prop022111-19}, we conclude that
\begin{equation}
\label{050301-20}
\lim_{n\to+\infty}{\cal J}_{n,2}=0.
\end{equation}

\medskip

\subsubsection{Limit of ${\cal J}_{n,1}$}

We write
$
{\cal J}_{n,1}={\cal J}_{n,1,1}+{\cal J}_{n,1,2}+{\cal J}_{n,1,3},$
where
\begin{align*}
&
{\cal J}_{n,1,1}:=\frac{1}{4\ga}\int_0^{t} \frac 1{n}  \sum_{x=2}^{n-2}
                                         {\mathbb E}_n\big[          (\Delta_n G)_x(s)
  {\cal E}_x(s) \big]  \dd s,\\
&
{\cal J}_{n,1,2}:=\frac{1}{8\ga}\int_0^{t} \frac 1{n}  \sum_{x=2}^{n-2}
                                                  (\Delta_n G)_x(s)
  \Big([\bar r_x^{(n)}(s) ]^2-[\bar p_x^{(n)}(s) ]^2\Big)\dd s ,\\                                          
&
{\cal J}_{n,1,3}:=\frac{1}{8\ga}\int_0^{t}  \frac 1{n}  \sum_{x=2}^{n-2}
                                    {\mathbb E}_n\Big[               (\Delta_n G)_x(s)
   \Big([\tilde r_x^{(n)}(s) ]^2-[\tilde p_x^{(n)}(s) ]^2\Big)
                                                 \Big] \dd s .
\end{align*}
To deal with the term ${\cal J}_{n,1,3}$ we use Proposition \ref{equipartition}, which allows us to conclude that
$
\lim_{n\to+\infty}{\cal J}_{n,1,3}=0.
$


Concerning the term ${\cal J}_{n,1,2}$ we first note that
$
\lim_{n\to+\infty}({\cal J}_{n,1,2}-\tilde{\cal J}_{n,1,2})=0,
$
where
\begin{align*}
&
\tilde {\cal J}_{n,1,2}:=\frac{1}{8\ga}\int_0^{t}   \frac 1{n}  \sum_{x=2}^{n-2}
                                                  (\Delta_n G)_x
  (s) \big(\bar r_x^{(n)}(s) \big)^2\dd s.
\end{align*}
This is a consequence of the following result, proved in Section \ref{sec7bis} (and which uses Proposition \ref{prop012812-19}). 

\begin{lemma}
\label{lm010911-19}
Under Assumption \ref{ass1}, for any $t>0$ we have
\begin{equation}
\label{052412-19}
\int_0^t\sup_{x\in\T_n}\big|\bar p^{(n)}_{x}(s)\big|^2\dd s\lesssim \frac{\log^2
  (n+1)}{n^2},\qquad n\ge1.
\end{equation}
\end{lemma}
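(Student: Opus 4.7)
The plan is to combine a discrete Sobolev-type inequality anchored at the boundary with the Duhamel representation for the averaged momentum, exploiting the fast exponential damping on the microscopic timescale together with the boundary control from Proposition \ref{prop012812-19}.

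Taking expectations in \eqref{eq:qdynamicsbulk}--\eqref{eq:qdynamicsbound}, the interior averages satisfy
\[
\frac{d\bar p_x^{(n)}}{dt} + 2\gamma n^2 \bar p_x^{(n)} = n^2\big(\bar r_{x+1}^{(n)} - \bar r_x^{(n)}\big),\qquad x\in\T_n^\circ,
\]
so Duhamel yields
\[
\bar p_x^{(n)}(t) = e^{-2\gamma n^2 t}\bar p_x^{(n)}(0) + n^2 \int_0^t e^{-2\gamma n^2(t-s)}\big(\bar r_{x+1}^{(n)}(s)-\bar r_x^{(n)}(s)\big)\,\dd s,
\]
and analogous identities with additional $\tilde\gamma n^2$ damping hold at $x=0,n$. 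The exponential factor $e^{-2\gamma n^2 t}$ forces the momentum to be close, on the macroscopic time scale, to its quasi-stationary value $(\bar r_{x+1}^{(n)}-\bar r_x^{(n)})/(2\gamma)$, which Lemma \ref{lm010301-20} suggests is of order $1/n$.

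Then I would apply the elementary discrete inequality
\[
\sup_{x\in\T_n}|\bar p_x^{(n)}(s)|^2 \leq 2|\bar p_0^{(n)}(s)|^2 + 2(n+1)\sum_{y=0}^{n-1}\big(\bar p_{y+1}^{(n)}(s) - \bar p_y^{(n)}(s)\big)^2
\]
and integrate over $[0,t]$. For the boundary term, Proposition \ref{prop012812-19} combined with $4|\bar p_0|^2 \leq 2|\bar p_0 + \bar p_n|^2 + 2|\bar p_0 - \bar p_n|^2$ gives $\int_0^t|\bar p_0^{(n)}(s)|^2 \dd s \lesssim \log(n+1)/n^2$. For the discrete-gradient term, apply the Duhamel formula once more, this time to $\bar p_{x+1}^{(n)} - \bar p_x^{(n)}$, which is driven by the second discrete difference $\Delta\bar r_{x+1}^{(n)}$. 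A Cauchy-Schwarz applied to the convolution extracts a factor $1/(2\gamma n^2)$ that compensates the $n^4$ from the squared source, reducing the task to a careful control of $\int_0^t\sum_x(\Delta\bar r_{x+1}^{(n)})^2\dd s$ via Lemma \ref{lm010301-20} and an iterated use of Proposition \ref{prop012812-19}.

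The main obstacle is obtaining the tight $\log^2(n+1)$ factor in this last step: a direct Cauchy-Schwarz on the Duhamel convolution yields only the too-weak $O(1/n)$ bound. To close the gap one must separate low and high spatial frequencies in a Fourier-type expansion compatible with the boundary structure (for instance, by an antisymmetric extension anchored at $\bar r_0=0$), control the low modes by the $H^1$-type bound \eqref{060301-20}, and control the high modes by the exponential damping, which effectively decorrelates them from the boundary forcing. The characteristic $\log^2(n+1)$ prefactor then emerges from the critical one-dimensional Sobolev embedding loss, namely the logarithmic divergence of $\sum_k 1/|k|$ over the momentum modes.
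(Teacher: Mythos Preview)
Your reduction via the discrete Sobolev inequality is sound up to the point where you need to control $(n+1)\int_0^t\sum_y (\bar p_{y+1}-\bar p_y)^2\,\dd s$, and your Duhamel for the momentum differences correctly identifies the driving term as the second difference $\Delta\bar r$. The gap is that you then invoke Lemma~\ref{lm010301-20} to bound $\int_0^t\sum_x(\Delta\bar r_{x+1})^2\,\dd s$, but that lemma only controls $n\int_0^t\sum_x(\bar r_{x+1}-\bar r_x)^2\,\dd s$, i.e.\ an $H^1$-type quantity, not $H^2$. No result in the paper gives a bound on the discrete Laplacian of $\bar r$, and your closing paragraph (antisymmetric extension, low/high frequency split, Sobolev logarithmic loss) is a heuristic, not an argument: it does not explain where the missing derivative comes from, nor why the boundary forcing would decouple from high modes. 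Even the initial-data contribution to $(n+1)\sum_y(\bar p_{y+1}(0)-\bar p_y(0))^2/n^2$ under Assumption~\ref{ass1} is only $O(1/n)$, already too large for the target $\log^2(n+1)/n^2$.

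The paper bypasses the $H^2$ obstacle entirely by working in Fourier space on $\hat\T_n$. After diagonalizing the $2\times 2$ system \eqref{eq:rev-sys} via the eigenvalues $\lambda_\pm(k)$, one obtains the explicit representation \eqref{070411-19x}: $\bar p_x(t)$ equals a free-evolution term $\hat{\cal T}_x^{(n)}(t)$ plus two convolutions of the boundary signals $\bar p_0\mp\bar p_n$ against explicit kernels $q_1^{(n)},q_2^{(n)}$. The uniform bounds $\sup_x|\hat{\cal T}_x^{(n)}(t)|\le \bar Q_0^{(n)}(n^2t)$ and $|q_\ell^{(n)}(t)|\le n^2\bar Q_\ell^{(n)}(n^2t)$ come from the spectral assumption~\eqref{spec-bound} together with Lemma~\ref{lm010112-19}, which gives $\bar Q_\ell^{(n)}(t)\lesssim (1+t)^{-(1+\ell)/2}$. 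Young's inequality for convolutions then yields $\|q_1^{(n)}\|_{L^1}\lesssim\log(n+1)$ and $\|q_2^{(n)}\|_{L^1}\lesssim 1$, and combining with Proposition~\ref{prop012812-19} gives the three contributions $\log(n+1)/n^2$, $\log^2(n+1)/n^2$, and $\log(n+1)/n^2$. The $\log^2$ thus arises from $\|q_1^{(n)}\|_{L^1}^2$, i.e.\ from the slow $t^{-1}$ decay of the kernel associated with $\bar p_0-\bar p_n$, not from a Sobolev embedding.
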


Next,  using Theorem \ref{hydro2},
we conclude that
\begin{equation}
\label{010401-20}
\lim_{n\to+\infty}\tilde {\cal J}_{n,1,2}=\frac{1}{8\ga}\int_0^{t} \dd s\; \int_{\T} \dd u\;              (\partial^2_u  G)(s,u)r^2(s,u),
\end{equation}
where $r(\cdot)$ is the solution of \eqref{eq:linear}.

Summarizing,  the results announced above, allow us to conclude that 
\begin{align*}
 &\int_{\T} G (t,u) \bar{\mathcal E}^{(n)}(t,u)  \dd u - \int_{\T} G (0,u)
   \bar{\mathcal E}^{(n)} (0,u)  \dd u \\
&
-\frac{1}{4\ga}\int_0^{t}  \int_{\T} (\partial^2_u G)(s,u) \bar{\mathcal E}^{(n)} (s,u)   \dd u \dd s =\frac{1}{8\ga}\int_0^{t} \int_{\T}(\partial^2_u  G)(s,u)r^2(s,u) \dd u \dd s \\
&
+\frac{T_-}{4\ga} \int_0^t \partial_u G(s,0)  \dd s - \frac{\left(T_+
     + \bar\tau_+^2\right)}{4\ga} \int_0^t \partial_u G(s,1) \dd s +o_n(t),
\end{align*}
where $\lim_{n\to\infty}\sup_{s\in[0,t_*]}|o_n(s)|=0$  for a  fixed $t_*>0$. 
Given $t>0$ we can take, as  test function, $G(s,u):=H(t,u)$, for any $s\in[0,t]$, with an arbitrary compactly supported $H\in  C([0,+\infty);C_{0}^2(\T))$. Integrating over $t\in[0,t_*]$ we obtain that $E_n(t)$, cf.~\eqref{seq}, 
satisfies 
\begin{align*}
 &  \int_0^{t_*}  \int_{\T}H(s,u)E_n(s,u) \dd s \dd u \\   = & \; \int_0^{t_*}\dd s\int_{\T}\partial_s H(s,u)E_n(s,u )\dd u +\int_{\T} H(0,u)\bar{\mathcal E}^{(n)}(0,u)\dd u \\
&
+
\frac1{4\ga} \int_0^{t_*} \dd s  \int_{\T} 
    \partial_u^2H(s, u) E_n(s, u)\dd u+ \frac {1}{8\ga}\int_0^{t_*} \dd s \int_{\T} \partial_u^2 H(s,u)
\Big( \int_0^s  r^2(\sigma,u)\dd \sigma\Big) \dd u \notag \\
   & - \frac{1}{4\ga} \int_0^{t_*}s\Big(\partial_uH(s,1) \left( T_+
       + \bar\tau_+^2\right)
     -T_-\partial_uH(s,0) \Big) \dd s+o_n(t_*),
  \end{align*}
  with $o_n(t_*)\to0$, as $n\to+\infty$.
This obviously implies
\eqref{022308-19x} and ends the proof of  Theorem \ref{main-result}.

\section{Dynamics of the averages}

\label{sec66}

This part aims at proving previous results which have been left aside: \begin{itemize}
\item (Sections \ref{sec7} and \ref{sec8}) Proposition \ref{prop012812-19} and
 Proposition \ref{lm010911-19a}, which only deal with the extremity points $x=0$ and $x=n$;
 
 \item (Section \ref{sec6.1})  Proposition \ref{cor013112-19z} which gives an $L^2$ bound on all the averages 
 
 \emph{(its proof uses Proposition \ref{lm010911-19a})};
 
 \item (Section \ref{sec7bis}) Lemma \ref{lm010911-19}, which controls $\sup_{x\in\T_n} |\bar p_x^{(n)}(s)|^2$ 
 
 \emph{(its proof uses Proposition \ref{prop012812-19})}; 
\item (Section \ref{sec6.5}) finally, Lemma \ref{lm010301-20}, which gives a bound on the $H^1$--norm of the stretch averages 

\emph{(its proof uses Proposition \ref{lm010911-19a}, Proposition \ref{cor013112-19z} and Proposition \ref{prop022111-19})}.

\end{itemize}
All their proofs are based on a refined analysis of the system of equations satisfied by the averages of momenta and stretches.

\bigskip

To simplify the notation, in the present section we omit writing the superscript $n$ by  the averages
 $\bar p_x^{(n)}(t)$, $\bar r_x^{(n)}(t)$ defined in  \eqref{En0}.
Their dynamics is given by the following system of ordinary
differential equations
 \begin{align}
 \label{eq:qdynamics-bar}
     \frac{\dd}{\dd t}  \bar  r_x(t) &= n^2 \left(\bar  p_x(t)- \bar  p_{x-1}(t)\right) ,\qquad  x=1,\ldots,n \vphantom{\Big(}\\
      \frac{\dd}{\dd t} \bar  p_x(t) &= n^2 \left(\bar  r_{x+1}(t)-
                                       \bar  r_x(t)\right)  - 2 \gamma
                                       n^2 \bar  p_x(t),\qquad x\in\T_n^{\rm o}
      \label{eq:pdyn-bar}\end{align}
  and at the boundaries: $\bar  r_0(t) \equiv0$, 
     \begin{align}
       \frac{\dd}{\dd t}  \bar   p_0(t) &= n^2 \;  \bar   r_1(t)   
      -  n^2(2\gamma+\tilde \gamma)  \bar   p_0(t) ,\label{eq:rbd-bar}\\
  \frac{\dd}{\dd t}  \bar    p_n(t) &= -n^2 \;  \bar    r_n(t)    +
                                      n^2\; \bar\tau_+(t)  -
                                      n^2(2\gamma +  \tilde \gamma)
                                      \bar    p_n(t) \label{eq:pbd-bar}.
   \end{align}
We have allowed above the forcing $\tau_+(t)$ to depend on $t$. Although in most
cases we shall consider $\tau_+(t)\equiv \tau_+$ constant, yet in some
instances we also admit to be in the form
$\tau_+(t)=\mathbf{1}_{[0,t_*)}(t)\tau_+$ for some $t_*>0$, $\tau_+\in\R$.


The resolution of these equations will allow us to get several crucial estimates. For that purpose, we first rewrite the system in terms of Fourier transforms, and we will then take its Laplace transform. 
{We} define
$$
\widehat{\bar r}(t,k)=\E_n \big[\widehat r(t,k)\big] ,\qquad  \widehat{\bar p}(t,k)=\E_n \big[\widehat p(t,k)\big].
$$
From \eqref{eq:qdynamics-bar} and \eqref{eq:pdyn-bar} we conclude that
\begin{equation}
  \label{eq:rev-sys}
\begin{split} 
& \frac \dd{\dd t}\begin{pmatrix}
\widehat{\bar r}(t,k)\\
\widehat{\bar p}(t,k)
 \end{pmatrix}
   = \; n^2A \begin{pmatrix}
\widehat{\bar r}(t,k)\\
\widehat{\bar p}(t,k)
 \end{pmatrix}+n^2\bar \tau_+(t) \begin{pmatrix}
0\\
e^{2\pi i  k}
 \end{pmatrix}\\
 &
\qquad\qquad\qquad-n^2\bar p_0(t) \begin{pmatrix}
1\\
\tilde\gamma
 \end{pmatrix} + n^2\bar p_n(t)\begin{pmatrix}
1\\
-\tilde\gamma e^{2\pi i  k}
 \end{pmatrix},
 \end{split}\end{equation}
where
$$
A= \begin{pmatrix}
0& &1 - e^{-2i\pi k}\\
 e^{2i\pi k}-1&&-2\gamma
 \end{pmatrix}.
$$
Assuming that $\bar \tau_+(t)\equiv \bar \tau_+$, $t\ge0$ we rewrite equation \eqref{eq:rev-sys} in the mild formulation and we obtain
\begin{align}
  \label{eq:rev-sys-mild}
 \begin{pmatrix}
\widehat{\bar r}(t,k)\\
\widehat{\bar p}(t,k)
 \end{pmatrix}
   = & \; \exp\left\{ n^2A t\right\} \begin{pmatrix}
\widehat{\bar r}(0,k) \notag \\
\widehat{\bar p}(0,k)
 \end{pmatrix}+n^2 \bar \tau_+\int_0^t \exp\left\{ n^2A (t-s)\right\}\begin{pmatrix}
0\\ 
e^{2\pi i  k}
 \end{pmatrix}\dd s\\
&
-n^2 \int_0^t \exp\left\{ n^2A (t-s)\right\} p_0(s) \begin{pmatrix}
1\\
\tilde\gamma
 \end{pmatrix} \dd s \notag\\ & + n^2\int_0^t \exp\left\{ n^2A (t-s)\right\} p_n(s) \begin{pmatrix}
1\\
-\tilde\gamma e^{2\pi i  k}
 \end{pmatrix} \dd s.\end{align}
Denoting by 
$ \lambda_\pm(k)=-\big(\ga\pm\sqrt{\ga^2-4\sin^2(\pi k)}\big)$ the eigenvalues of $A$, 
we obtain the following autonomous integral equation for $\{\bar p_x(t)\}_{x\in\T_n}$
\begin{equation}
  \label{070411-19x}
\bar p_x(t)
   =\hat{\cal T}^{(n)}_x(t)+\int_0^tq_{1,x}^{(n)}(t-s) \bar
    p_{0,n}^{\rm diff}(s) \dd s+\int_0^tq_{2,x}^{(n)}(t-s) \bar
    p_{0,n}^{\rm sum}(s) \dd s,
    \end{equation}
    where
    \begin{align*}
q_{1,x}^{(n)}(t)&:=\underset{{k\in \widehat{\T}_n}}{{\hat\sum}}\frac{n^2 }{\la_-(k)-\la_+(k) } e^{2\pi ik x} (e^{2\pi ik}-1)\big(e^{n^2t\la_+(k)}-
                            e^{n^2t\la_-(k)}\big)\\
q_{2,x}^{(n)}(t)&:=\tilde \ga \underset{{k\in \widehat{\T}_n}}{{\hat\sum}}\frac{n^2 }{\la_-(k)-\la_+(k) } e^{2\pi ik x}  \big(\la_-(k)e^{n^2t\la_-(k)}-
                                             \la_+(k)e^{n^2 t\la_+(k)}\big),\\
\bar p_{0,n}^{\rm diff}(s)&:= \bar p_0(s) -\bar p_n(s), \vphantom{\bigg(}\\
\bar p_{0,n}^{\rm sum}(s)&:= \bar p_0(s) +e^{2\pi i  k}\bar p_n(s),\vphantom{\bigg(}
 \\
\hat{\cal T}^{(n)}_x(t)&:=\underset{{k\in \widehat{\T}_n}}{{\hat\sum}}e^{2\pi ik x} \frac{\big(-(1-e^{2\pi ik})\hat{ \bar r}(0,k)+\bar \tau_+  e^{2\pi i  k}\big)\big(e^{n^2 t\la_-(k)}- e^{n^2  t\la_+(k)}\big)}{\la_-(k)-\la_+(k)},\\
& \quad 
+\underset{{k\in \widehat{\T}_n}}{{\hat\sum}}e^{2\pi ik x} \frac{\hat{ \bar p}(0,k) \big(\la_-(k)e^{t\la_-(k)}-
                                             \la_+(k)e^{t\la_+(k)}\big)}{\la_-(k)-\la_+(k)}.
\end{align*}

%
\subsection{Proof of Proposition \ref{prop012812-19}}
\label{sec7}
We define, for any $x \in \T_n$,  and any $\lambda \in \mathbb{C}$ such that $\mathrm{Re}(\lambda) >0$, the Laplace transforms:
$$
\tilde{\bar p}_x(\la):=\int_0^{+\infty}e^{-\la t}{\bar p}_x(t)\dd
t,\qquad \tilde{\bar \tau}_+(\la):=\int_0^{+\infty}e^{-\la t}{\bar \tau}_+(t)\dd t
$$
and
$$
\tilde{{\bar r}}(\la,k):=\int_0^{+\infty}e^{-\la t}\widehat{\bar r}(t,k)\dd t,\qquad \tilde{{\bar p}}(\la,k):=\int_0^{+\infty}e^{-\la t}\widehat{\bar p}(t,k)\dd t.
$$
Performing the Laplace transform on both sides of \eqref{eq:rev-sys}
we obtain the following system
\begin{align}
  \label{eq:rev-sys-laplace1}
 \begin{pmatrix}
\tilde{{\bar r}}(\la,k)\\
\tilde{{\bar p}}(\la,k)
 \end{pmatrix}
   = &\; (\la-n^2A)^{-1}    
   \begin{pmatrix}
\widehat{\bar r}(0,k)\\
\widehat{\bar p}(0,k)
 \end{pmatrix}+n^2\tilde{\bar \tau}_+(\la) (\la-n^2A)^{-1} \begin{pmatrix}
0\\
e^{2\pi i  k}
 \end{pmatrix}\\
&
-n^2\tilde{\bar p}_0(\la) (\la-n^2A)^{-1}\begin{pmatrix}
1\\
\tilde\gamma
 \end{pmatrix} + n^2\tilde{\bar p}_n(\la) (\la-n^2A)^{-1}\begin{pmatrix}
1\\
-\tilde\gamma e^{2\pi i  k}
 \end{pmatrix}. \notag \end{align}
Here
$$
(\la-n^2A)^{-1}= \frac{1}{n^2\Delta(\la/n^2,k)}
\begin{pmatrix}
\dfrac{\la}{n^2}+2\ga & &1 - e^{-2i\pi k}\\
e^{2i\pi k}-1&&\dfrac{\la}{n^2}
\end{pmatrix}
$$
and
$
\Delta(\la,k):=\la^2+2\ga \la+4\sin^2(\pi k).
$
Let
$$
\tilde{\bar p}^{\rm diff}_{0,n}(\la):=\tilde{\bar p}_{0}(\la)-\tilde{\bar
  p}_{n}(\la)\quad\mbox{and}\quad \tilde{\bar p}_{0,n}^{(+)}(\la):=\tilde{\bar p}_{0}(\la)+\tilde{\bar
  p}_{n}(\la).
$$
Since
${\hat\sum}_{k\in \widehat{\T}_n}\widehat{\bar
    r}(0,k)=r_0=0$ we have
\begin{align}
\label{010402-20}
\underset{{k\in \widehat{\T}_n}}{{\hat\sum}}\frac{-4\sin^2(\pi k)\widehat{\bar
    r}(0,k)}{\Delta(\la/n^2,k)}& =\underset{{k\in \widehat{\T}_n}}{{\hat\sum}}\frac{\big(-4\sin^2(\pi k)+\Delta(\la/n^2,k)\big)\;\widehat{\bar
    r}(0,k)}{\Delta(\la/n^2,k)}\notag\\
&
=\underset{{k\in \widehat{\T}_n}}{{\hat\sum}}\frac{\la/n^2 (\la/n^2+2\ga)\;\widehat{\bar
    r}(0,k)}{\Delta(\la/n^2,k)}.
\end{align} 
Using \eqref{eq:rev-sys-laplace1} and
\eqref{010402-20}  we conclude the expressions
for $\tilde{\bar p}^{\rm diff}_{0,n}(\la)$ and $\tilde{\bar p}_{0,n}^{(+)}(\la)$
\begin{align}
  \label{eq:diff}
  \tilde{\bar p}^{\rm diff}_{0,n}(\la)
   = & \; \frac{1}{n^2  \mathbf{e}_{\rm d,n}(\la/n^2)}\underset{{k\in \widehat{\T}_n}}{{\hat\sum}}\frac{ (\la/n^2+2\ga)\widehat{\bar
    r}(0,k)}{\Delta(\la/n^2,k)}\\&+\frac{1}{ n^2  \mathbf{e}_{{\rm d},n}(\la/n^2)}\underset{{k\in \widehat{\T}_n}}{{\hat\sum}}\frac{(1-e^{-2\pi
    i k})\widehat{\bar p}(0,k)}{\Delta(\la/n^2,k)}\notag
- \frac{2 \tilde{\bar
    \tau}_+(\la)}{   \mathbf{e}_{{\rm d},n}(\la/n^2)}\underset{{k\in \widehat{\T}_n}}{{\hat\sum}}\frac{\sin^2(\pi k)}{\Delta(\la/n^2,k)}
\end{align}
and
\begin{align}
  \label{eq:sum}
 \tilde{\bar p}^{(+)}_{0,n}(\la)
   =& \;\frac{2i}{n^2 \mathbf{e}_{{\rm s},n}(\la/n^2)}\underset{{k\in \widehat{\T}_n}}{{\hat\sum}}\frac{\sin(2\pi k)\widehat{\bar
    r}(0,k)}{\Delta(\la/n^2,k)} \\&
+\frac{\la}{n^4 \mathbf{e}_{{\rm s},n}(\la/n^2)}\underset{{k\in \widehat{\T}_n}}{{\hat\sum}}\frac{(1+e^{-2\pi i k}) \widehat{\bar p}(0,k)}{\Delta(\la/n^2,k)}
 +\frac{2\la \tilde{\bar
    \tau}_+(\la) }{n^2 \mathbf{e}_{{\rm s},n}(\la/n^2)}\underset{{k\in \widehat{\T}_n}}{{\hat\sum}}\frac{\cos^2(\pi k)}{\Delta(\la/n^2,k)}, \notag
  \end{align}
where
\begin{align*}
 \mathbf{e}_{{\rm d},n}(\la)&:=\underset{{k\in \widehat{\T}_n}}{{\hat\sum}}\frac{\la+2\ga+2 \tilde\ga\sin^2(\pi
  k)}{\la^2 +2\ga\la +4\sin^2(\pi
  k)},\\
 \mathbf{e}_{{\rm s},n}(\la) &:
=\underset{{k\in \widehat{\T}_n}}{{\hat\sum}}\frac{\la^2+2\ga
 \la+2 \tilde\ga \la\cos^2(\pi k)+4\sin^2(\pi
  k)}{\la^2+2\ga\la +4\sin^2(\pi k)}.
\end{align*}

\subsubsection{First part of Proposition \ref{prop012812-19}: estimates of $\|\bar p_0-\bar p_n\|_{L^2(\R_+)}$}.

We fix $t_*>0$ and consider 
$\bar\tau_+(t):=\bar\tau_+ \mathbf 1_{[0,t_*]}(t)$. Then, for $\lambda \in \mathbb{C}, \mathrm{Re}(\lambda)>0$,
$$
\tilde{\bar
    \tau}_+(\la) =\bar\tau_+\int_0^{t_*}e^{-\la t}\dd
  t=\frac{\bar\tau_+}{\la}\left(1-e^{-\la t_*}\right).
$$
By the Plancherel Theorem we have
\begin{equation}
\label{plancherel1}
\|\bar p_0-\bar p_n\|_{L^2(\R_+)}^2=\frac{1}{2\pi}\int_{\R}\big|\tilde{\bar
  p}_{0,n}^{\rm diff}(i\eta)\big|^2\dd\eta,
\end{equation}
therefore, from \eqref{eq:diff}, we estimate
\begin{equation}
\label{012712-19}
\|\bar p_0-\bar
  p_n\|_{L^2(\R_+)}^2\lesssim P^{\rm d}_{n,1}+P^{\rm d}_{n,2}+P^{\rm d}_{n,3},
\end{equation}
where
\begin{align}
&P^{\rm d}_{n,1}:=\frac{1}{n^2}\int_{\R}\big|\rho_{{\rm d},n}(\eta) \big|^2
 \big|e_{{\rm d},n}(\eta)\big|^{-2}\dd\eta,
\label{022712-19a}\\
&P^{\rm d}_{n,2}:=\frac{1}{n^2}\int_{\R}\big|\pi_{{\rm d},n}(\eta) \big|^2
 \big|e_{{\rm d},n} (\eta)\big|^{-2}\dd\eta,
\label{022712-19b}\\
&P^{\rm d}_{n,3}:=\frac{1}{n^2}\int_{\R}\left|\frac{\sin(n^2\eta
  t_*/2)}{\eta}\right|^2\big|a_{n}(\eta) \big|^2
 \big|e_{{\rm d},n}(\eta)\big|^{-2}\dd\eta,
\label{022712-19c}
\end{align}
and 
\begin{align}
e_{{\rm d},n}(\eta)&:=\mathbf{e}_{{\rm d},n}(i\eta), \label{e-d}\\
 \rho_{{\rm d},n}(\eta)&:=\underset{{k\in \widehat{\T}_n}}{{\hat\sum}}\frac{(i\eta+2\ga)\widehat{\bar
    r}(0,k)}{4\sin^2(\pi  k)-\eta^2+2i\ga \eta},\label{pi-d}\\
 \pi_{{\rm d},n}(\eta)&:=\underset{{k\in \widehat{\T}_n}}{{\hat\sum}}\frac{(1-e^{-2\pi
    i k})\widehat{\bar p}(0,k)}{4\sin^2(\pi  k)-\eta^2+2i\ga \eta},
\label{rho-d}\\
a_{n}(\eta)&:=\underset{{k\in \widehat{\T}_n}}{{\hat\sum}}\frac{2\sin^2(\pi
  k)}{4\sin^2(\pi  k)-\eta^2+2i\ga \eta } .
\label{a-n}
\end{align}
After elementary, but somewhat tedious calculations, see
the Appendix sections \ref{sec:app1}, \ref{sec:app2}, \ref{sec:app4} and \ref{sec:app5} for details, we conclude that: for any $\eta \in \R$,
\begin{align}
\label{021612-19}
\frac{1}{|\eta|}&\lesssim |e_{{\rm d},n}(\eta)|, \\
\label{011412-19}
  |a_{n}(\eta)|&\lesssim \frac{1}{1+\eta^2},\\
\label{042712-19cc}
 |\pi_{{\rm d},n}(\eta)|&\lesssim 
  \frac{1}{\eta^2+1} \log\big(1+|\eta|^{-1}\big) ,\\
\label{062712-19}
 \bigg|\frac{\rho_{{\rm d},n}(\eta)}{e_{{\rm d},n}(\eta)}\bigg|&\lesssim 
  \frac{1}{\eta^2+1}, \qquad n \geqslant 1.
\end{align}
{We} emphasize here that these bounds are obtained thanks to the assumption that we made on the spectrum of the averages at initial time, recall Assumption \ref{ass1}--\eqref{spec-bound}.  From \eqref{022712-19c},     \eqref{021612-19} and \eqref{011412-19} we get
\begin{align*}
&P^{\rm d}_{n,3}\lesssim \frac{1}{n^2}\int_{0}^1\sin^2(n^2\eta
  t_*/2)\dd\eta+\frac{1}{n^2}\int_{1}^{+\infty}\frac{\dd\eta}{\eta^2}\lesssim \frac{1}{n^2}.
\end{align*}
A similar argument, using \eqref{042712-19cc} and \eqref{062712-19}, shows that $P^{\rm d}_{n,j}\lesssim n^{-2}$, for $j=1,2.$ 
As a result we conclude \eqref{022812-19a}.


\subsubsection{Second part of Proposition \ref{prop012812-19}: estimates of $\|\bar p_0+\bar p_n\|_{L^2(\R_+)}$}. 

Recall that $\bar\tau_+(t)=\bar\tau_+ \mathbf{1}_{[0,t_*)}(t)$. The strategy to estimate $\|\bar p_0+\bar p_n\|_{L^2(\R_+)}$ is completely similar. First, we write
\begin{equation}
\label{plancherel2}
 \|\bar p_0+\bar p_n\|_{L^2(\R_+)}^2=\frac{1}{2\pi}\int_{\R}\big|\tilde{\bar
  p}^{(+)}_{0,n}(i\eta)\big|^2 \dd\eta,
\end{equation}
with $\tilde{\bar
  p}^{(+)}_{0,n} $ given by \eqref{eq:sum}.
Substituting from \eqref{eq:sum} we get
\begin{align}
  \label{061812-19}
\|\bar p_0+\bar p_n\|_{L^2(\R_+)}^2\lesssim P^{\rm s}_{n,1}+P^{\rm s}_{n,2}+P^{\rm s}_{n,3},
 \end{align}
where
\begin{align}
  \label{061812-19b}
&P^{\rm s}_{n,1}:
=\frac{1}{n^2}\int_{\R}\big|\rho_{{\rm s},n}(\eta) \big|^2
  \big|e_{{\rm s},n}(\eta)\big|^{-2}\dd\eta,
 \\
 & \label{061812-19c}
P^{\rm s}_{n,2}:=\frac{1}{n^2}\int_{\R}\eta^2\big|\pi_{{\rm s},n}(\eta) \big|^2
  \big|e_{{\rm s},n}(\eta)\big|^{-2}\dd\eta
 ,\\
 & \label{061812-19a}
P^{\rm s}_{n,3}:
=\frac{1}{n^2}\int_{\R}\sin^2(n^2\eta
  t_*/2)\big|c_n(\eta) \big|^2
  \big|e_{{\rm s},n}(\eta)\big|^{-2}\dd\eta
 \end{align} 
and 
\begin{align}
e_{{\rm s},n}(\eta)&:=\mathbf{e}_{{\rm s},n}(i\eta),\\
\label{062712-19a}
\rho_{{\rm s},n}(\eta) &:=\underset{{k\in \widehat{\T}_n}}{{\hat\sum}}\frac{\sin(2\pi k)\widehat{\bar
    r}(0,k)}{-\eta^2+2i\ga\eta+4\sin^2(\pi k)},\\
\label{062712-19d} \pi_{{\rm s},n}(\eta) &:=\underset{{k\in \widehat{\T}_n}}{{\hat\sum}}\frac{(1+e^{-2\pi
  i k}) \widehat{\bar p}(0,k)}{-\eta^2+2i\ga\eta+4\sin^2(\pi k)},\\
\label{c-eta}
c_n(\eta)&:=\underset{{k\in \widehat{\T}_n}}{{\hat\sum}}\frac{1+\cos(2\pi k)}{-\eta^2+4\sin^2(\pi k)+2i\ga\eta }.
\end{align}
We have, see the Appendix Sections \ref{sec:app3}, \ref{sec:app4}, \ref{sec:app5}, \ref{sec:app6},  for any $\eta\in\R$
\begin{align}
\label{022012-19}
1&\lesssim |e_{{\rm s},n}(\eta)|,\\
\label{052312-19x}
|c_n(\eta)|&\lesssim \frac{1}{\sqrt{|\eta|}(1+|\eta|^{3/2})},\\
\label{042712-19c}
 |\rho_{{\rm s},n}(\eta)|&\lesssim
  \frac{1}{\eta^2+1} \log\big(1+|\eta|^{-1}\big),\\
\label{092712-19}
 |\pi_{{\rm s},n}(\eta)|&\lesssim
  \frac{1}{|\eta|+\eta^2},\qquad n\ge 1.
\end{align}
Using \eqref{022012-19} and \eqref{052312-19x} we conclude that
\begin{align}
  \label{061812-19abis}
P^{\rm s}_{n,3}
& \lesssim \frac{1}{n^2}\int_{0}^{+\infty}\frac{\sin^2(n^2\eta
  t_*/2)}{\eta} \frac{\dd\eta}{1+\eta^{3}}\lesssim  \frac{1}{n^2}\int_{0}^{1}\frac{\sin^2(n^2\eta
  t_*/2)}{\eta}\dd\eta+\frac{1}{n^2}\\
&
  \lesssim \frac{1}{n^2}\int_{0}^{n^2t_*}\frac{\sin^2(\eta
  )}{\eta}\dd\eta +\frac{1}{n^2}\lesssim \frac{\log(n^2 t_*+1)}{n^2},\qquad n\ge1.\notag
 \end{align} 
From  \eqref{022012-19}, \eqref{042712-19c} and \eqref{092712-19} we easily obtain
$
P^{\rm s}_{n,j}
\lesssim n^{-2}$, $n\ge1$, for $j=1,2$.
 By virtue of \eqref{061812-19} we  conclude \eqref{022812-19b}. Finally, Proposition \ref{prop012812-19} is proved. \qed


\subsection{Proof of Proposition \ref{lm010911-19a}}

\label{sec8}

To prove Proposition \ref{lm010911-19a}, we use once again the autonomous system of equations for the averages of the momenta \eqref{eq:qdynamics-bar}--\eqref{eq:pbd-bar}, and we will prove two estimates, on 
\[\int_0^t\big(\bar p_0(s)+\bar p_n(s)\big)\dd s \qquad \text{and}\qquad \int_0^t\big(\bar p_0(s)-\bar p_n(s)\big)\dd s.\]
Recall that $\bar\tau_+(t)=\bar \tau_+\mathbf{1}_{[0,t_*)}(t)$. Note that for
any $0\le a<b\le t_*$ we write
\begin{equation}
\label{012412-19z}
\int_a^b\bar p_x(s)\dd s=\int_{\R}\frac{e^{i\eta
    (b-a)}-1}{2\pi i\eta}e^{i\eta a}\tilde{\bar p}_x(i\eta)\dd \eta.
\end{equation}
Using \eqref{eq:diff} and \eqref{eq:sum} we conclude that
\begin{align*}
  \frac{e^{i\eta
    (b-a)}-1}{2\pi i\eta}e^{i\eta a}\;\tilde{\bar p}^{\rm diff}_{0,n}(i\eta) & =\tilde
    P_{n,1}^{\rm d}+\tilde P_{n,2}^{\rm d}+\tilde
    P_{n,3}^{\rm d}, \\
    \frac{e^{i\eta
    (b-a)}-1}{2\pi i\eta}e^{i\eta a}\;\tilde{\bar p}^{(+)}_{0,n}(i\eta) & =\tilde
    P_{n,1}^{\rm s}+\tilde P_{n,2}^{\rm s}+\tilde
    P_{n,3}^{\rm d},
\end{align*}
where, for  $\iota\in\{{\rm d},{\rm s}\}$, we denote
\begin{align*}
     \tilde P_{n,1}^{\iota}&:= \frac{1-e^{i\eta
    (b-a)}}{2\pi i n^2 \eta}e^{i\eta a}\rho_{\iota,n}\left(\frac{\eta}{n^2}\right) e_{\iota,n}^{-1}\left(\frac{\eta}{n^2}\right),\\
 \tilde P_{n,2}^{\iota}&:=\frac{e^{i\eta
    (b-a)}-1}{2\pi i n^2 \eta}e^{i\eta a}\pi_{\iota,n}\left(\frac{\eta}{n^2}\right) e_{\iota,n}^{-1}\left(\frac{\eta}{n^2}\right), \\
 \tilde P_{n,3}^{\rm d}&:=-\frac{\tilde{\bar
    \tau}_+ }{2\pi \eta^2}\big(e^{i\eta
    (b-a)}-1\big)\big(e^{-i\eta
    t_*}-1\big)e^{i\eta a} a_n\left(\frac{\eta}{n^2}\right)
  e_{{\rm d},n}^{-1}\left(\frac{\eta}{n^2}\right),\\
 \tilde P_{n,3}^{\rm s}&:= \frac{\tilde{\bar
    \tau}_+ }{2\pi i n^2\eta} \big(e^{i\eta
    (b-a)}-1\big)\big(e^{-i\eta
    t_*}-1\big)e^{i\eta a}c_n \left(\frac{\eta}{n^2}\right) e_{{\rm s},n}^{-1}\left(\frac{\eta}{n^2}\right).
\end{align*}
Hence,
\begin{align}
\label{022412-19z}
\left|\int_a^b\big(\bar p_0(s)-\bar p_n(s)\big)\dd s\right|& \le\left\{\int_{\R}\frac{
   \sin^2(\eta (b-a)/2)}{4\pi^2\eta^2}\dd
  \eta\right\}^{1/2}\left\{\int_{\R}\left|\tilde{\bar
  p}_{0,n}^{\rm diff}(i\eta)\right|^2 \dd \eta\right\}^{1/2} \notag \\ & \lesssim \frac{ (b-a)^{1/2}}{n},
\end{align}
where we have used Proposition \ref{prop012812-19}--\eqref{022812-19a}.
On the other hand
\begin{align}
\label{032412-19z}
\left|\int_0^t\big(\bar p_0(s)+\bar p_n(s)\big)\dd s\right|\lesssim \mathrm{I}_{n,1}+\mathrm{I}_{n,2}+\mathrm{I}_{n,3},
\end{align}
where
\begin{align*}
\mathrm{I}_{n,1}&:= \frac{1}{n^2}\int_{\R}\big|\sin(n^2\eta (b-a)/2)\big|\; \big|\rho_{{\rm s},n}(\eta)
  \big|\; \left|e_{{\rm s},n}(\eta)\right|^{-1}\frac{\dd\eta }{|\eta|},\\
\mathrm{I}_{n,2}&:=\frac{1}{n^2}\int_{\R}\big|\sin(n^2\eta (b-a)/2)\big|\; \big|\pi_{{\rm s},n}(\eta) \big|\; \left|e_{{\rm s},n} (\eta)\right|^{-1}\dd\eta,\\
\mathrm{I}_{n,3}&:=\frac{1}{n^2}\int_{\R}\big|\sin(n^2\eta (b-a)/2)\sin(n^2\eta t_*/2)\big|\;\big|c_n(\eta)
  \big|\big|\left|e_{{\rm s},n} (\eta)\right|^{-1}\frac{\dd\eta }{|\eta|}.
\end{align*}
Suppose that $p\in(0,\frac12]$ and $0<b-a<1$. Thanks to the estimates \eqref{022012-19}--\eqref{092712-19} we conclude that
\begin{align}
\label{032412-19az}
\mathrm{I}_{n,3}&\lesssim 
 \frac{1}{n^2}\int_{(b-a)^{-p}}^{+\infty}\frac{\dd\eta }{\eta^{2}}
+\frac{1}{n^2}\int_{0}^{(b-a)^{-p}}\frac{|\sin(n^2\eta (b-a)/2)\sin(n^2\eta
  t_*/2)|\dd\eta}{\sqrt{\eta}(1+\eta^{3/2})}\notag\\
&
\lesssim \frac{ (b-a)^{p}}{n^2}+\mathrm{I}_{n,3}^1+\mathrm{I}_{n,3}^2,
\end{align}
where $\mathrm{I}_{n,3}^j$, $j=1,2$ correspond to splitting the domain of integration
in the last integral into $[0,1]$ and $[1, (b-a)^{-p}]$, respectively.
We have
$$
|\sin x|\lesssim x^p,\quad x>0.
$$
Using this estimate to bound $|\sin(n^2\eta (b-a)/2)|$,
since $|\sin(n^2\eta
  t_*/2)|\le 1$, we bound  (recall that $p\in(0,\frac12]$):
\begin{align}
\label{032412-19azz1}
\mathrm{I}_{n,3}^1\lesssim \frac{(b-a)^p}{n^{2(1-p)}}\int_{0}^{1}\frac{\dd\eta
  }{\eta^{1/2-p}}
\lesssim 
  \frac{(b-a)^p}{n}.
\end{align}
In addition,
\begin{align}
\label{032412-19azz2}
\mathrm{I}_{n,3}^2\lesssim \frac{(b-a)^{p}}{n^{2(1-p)}}\int_{1}^{(b-a)^{-p}}\frac{\dd\eta
  }{\eta^{2-p}}
\lesssim 
  \frac{(b-a)^p}{n}.
\end{align}
As a result, we obtain
$
\mathrm{I}_{n,3}
\lesssim 
  (b-a)^p/n.
$
Estimates for $\mathrm{I}_{n,j}$, $j=1,2$ are similar. 
Hence, for any $t>0$ we can find $p>0$ such that
\begin{align}
\label{012812-19}
&\left|\int_a^b \big(\bar p_0(s)+\bar p_n(s)\big)\dd s\right|\lesssim
  \frac{(b-a)^p}{n},\qquad n\ge1,
\end{align}
and this together with \eqref{022412-19z} implies  \eqref{052412-19a}.
\qed


\subsection{Proof of Proposition \ref{cor013112-19z}}

\label{sec6.1}

Let
 $$
 \Xi_n(t) :=\sum_{ x\in\T_n} \bar  p_{x}^2(t)+\sum_{ x=1}^{n}  \bar
 r_{x}^2(t),\qquad \mathfrak{P}_n(t):=\sum_{ x\in \T_n} \bar  p_{x}^2(t).
 $$
 Multiplying the  equations \eqref{eq:qdynamics-bar},
 \eqref{eq:pdyn-bar} by $\bar  r_x(t)$
 and $\bar p_x(t)$, respectively and
 \eqref{eq:rbd-bar}, \eqref{eq:pbd-bar} by $ \bar   p_0(t)   $ and
 $\bar    p_n(t)$ and summing up we get
\begin{align}
     \frac12\frac{\dd}{\dd t}  \Xi_n(t)
 =n^2\left\{-\gamma \mathfrak{P}_n(t)-\frac12\tilde \gamma
       \bar p_0^2(t) -\frac12 
      \tilde \gamma\bar p_n^2(t)+ \; \bar p_n(t)\bar\tau_+\right\}.
      \label{eq:pdyn-bar2a}
 \end{align}
Hence, by virtue of \eqref{052412-19a}, we conclude that for any $t_*>0$
\begin{align}
     \frac12\Big( \bar\Xi_n(t) -\bar\Xi_n(0)\Big) \le \;
  n^2|\bar\tau_+|\;\bigg|\int_0^t\bar p_n(s)\dd s\bigg|\lesssim n,\qquad t\in[0,t_*],
      \label{eq:pdyn-bar2aa}
 \end{align}
and \eqref{053112-19z} follows.
From \eqref{eq:pdyn-bar2a} we obtain further
\begin{align}
     \frac12  \mathfrak{P}_n(t) \le \; \frac12 \Xi_n(0)
  -n^2\int_0^t  \mathfrak{P}_n(s)\dd s+n^2|\bar\tau_+|\;\bigg|\int_0^t\bar
  p_n(s)\dd s \bigg|.
      \label{010301-20a}
 \end{align}
Using Assumption \ref{ass1}--\eqref{E0} and Proposition \ref{lm010911-19a} we conclude that given
$t_*>0$ we have
\begin{equation}
    \mathfrak{P}_n(t) \lesssim \;n
  -n^2\int_0^t \bar{\mathfrak{P}}_n(s)\dd s
      \label{010301-20c},\quad t\in[0,t_*],\,n\ge1.
 \end{equation}
Therefore
\eqref{010301-20} follows upon an application of the Gronwall inequality.
\qed

\subsection{Proof of Lemma \ref{lm010911-19}} 


\label{sec7bis}

To prove Lemma \ref{lm010911-19}, we need an estimate for 
\[ \sup_{x\in\T_n}\big|\bar p^{(n)}_{x}(s)\big|^2,\] which can be done using the explicit formula \eqref{070411-19x}.
Note first that we have the following inequalities for $\lambda_\pm(k)$: 
\[-2\gamma \leqslant \lambda_+(k) \leqslant - \gamma  \qquad \text{and} \qquad \frac{-4\sin^2(\pi k)}{\gamma} \leqslant \lambda_-(k) \leqslant \frac{-2 \sin^2(\pi k)}{\gamma}.\]
Therefore, in order to estimate the members which appear in the right hand side of \eqref{070411-19x}, we introduce, for $\ell=0,1,2$,
\begin{align*}
&
\bar Q_\ell^{(n)}(t):= \underset{{k\in \widehat{\T}_n}}{{\hat\sum}}\frac{|\sin(\pi
  k)|^{\ell}}{|\la_-(k)-\la_+(k)| }  \Big(\exp\Big\{-\frac{2t\sin^2(\pi
  k)}{\ga}\Big\}+\exp\left\{-\ga t\right\}\Big).
\end{align*}
\begin{lemma}
\label{lm010112-19}
We have
\begin{equation}
\label{010112-19}
\bar Q_\ell^{(n)}(t)\lesssim
\frac{1}{(1+t)^{(1+\ell)/2}},\qquad t\ge0,\,n\ge1,\,\ell=0,1,2.
\end{equation}
\end{lemma}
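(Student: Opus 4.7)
The plan is to split $\bar Q_\ell^{(n)}(t)=A_\ell^{(n)}(t)+B_\ell^{(n)}(t)$ according to the two exponentials, and to treat each via Riemann-sum comparison with an explicit integral. The key structural input is
\[
|\lambda_-(k)-\lambda_+(k)|=2|\gamma^2-4\sin^2(\pi k)|^{1/2},
\]
which is bounded below by $\gamma\sqrt 3$ whenever $|\sin(\pi k)|\le \gamma/4$, has only an $L^1$-integrable singularity of type $|\cdot|^{-1/2}$ at the transition point $|\sin(\pi k_0)|=\gamma/2$ (present only if $\gamma<2$), and is bounded above by $2\gamma$ everywhere. These facts let me approximate the averaged sums $\hat\sum_k$ by the corresponding integrals on $[0,1]$ up to an error controlled by the maximum of the integrand divided by $n+1$.

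For the $e^{-\gamma t}$ piece $B_\ell^{(n)}(t)$, the integral
$\int_0^1 |\sin(\pi u)|^\ell/|\lambda_-(u)-\lambda_+(u)|\,\dd u$
converges thanks to integrability of the transition singularity, and the corresponding Riemann sum is controlled uniformly in $n$ by a classical piecewise-monotonicity bound (after excising, if $\gamma<2$, a grid-adapted neighborhood of size $1/n$ around $k_0$, handled by hand via the $|\cdot|^{-1/2}$ bound). Then $B_\ell^{(n)}(t)\lesssim e^{-\gamma t}\lesssim (1+t)^{-(1+\ell)/2}$ trivially, since $e^{-\gamma t}$ decays faster than any polynomial.

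For the $e^{-2t\sin^2(\pi k)/\gamma}$ piece $A_\ell^{(n)}(t)$, I would further split according to whether $|\sin(\pi k)|\ge \gamma/4$ or $|\sin(\pi k)|<\gamma/4$. On the first region the exponential is bounded by $e^{-\gamma t/8}$ and the coefficient sum is handled by the same integrability argument as for $B_\ell^{(n)}$, yielding again a bound $\lesssim e^{-\gamma t/8}\lesssim (1+t)^{-(1+\ell)/2}$. On the second region $|\lambda_-(k)-\lambda_+(k)|\ge \gamma\sqrt3$, so the contribution is controlled by
\[
\frac{C}{n+1}\sum_{k\in\hat\T_n:\,|\sin(\pi k)|<\gamma/4}|\sin(\pi k)|^\ell \exp\!\Big(-\tfrac{2t\sin^2(\pi k)}{\gamma}\Big).
\]
Because the summand is invariant under $k\mapsto 1-k$, it suffices to look at $k\in[0,1/2]$ small, where $\sin(\pi k)\asymp k$. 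Comparing with the integral $\int_0^{1/2} u^\ell e^{-ctu^2}\,\dd u$ and using the substitution $v=u\sqrt t$ delivers the Gaussian-type bound $\lesssim (1+t)^{-(1+\ell)/2}$.

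The main obstacle I expect is making the Riemann-sum-to-integral comparison \emph{uniform in $n$}. For the Gaussian piece, the maximum of $k\mapsto |\sin(\pi k)|^\ell e^{-2t\sin^2(\pi k)/\gamma}$ is of order $t^{-\ell/2}$, so a piecewise-monotone Riemann bound produces an extra term $\sim t^{-\ell/2}/(n+1)$; this is compatible with the target $t^{-(1+\ell)/2}$ in the regime where the lemma is eventually applied (namely when the argument of $\bar Q_\ell^{(n)}$ is at most of order $n^2$, corresponding to macroscopic times of order one). For the near-transition singularity when $\gamma<2$, I would excise a neighborhood of $k_0$ of width $\sim 1/n$ and estimate it directly using $\int_{-1/n}^{1/n}|v|^{-1/2}\,\dd v \lesssim n^{-1/2}$, which is negligible after multiplication by the $e^{-\gamma t/8}$ factor available in both sub-cases where this region matters.
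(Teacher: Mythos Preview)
Your proposal is correct and follows the same core strategy as the paper's (very terse) proof: the decisive estimate is the Gaussian bound $\hat\sum_k |\sin(\pi k)|^\ell e^{-2t\sin^2(\pi k)/\gamma}\lesssim t^{-(1+\ell)/2}$ via comparison with $\int u^\ell e^{-ctu^2}\,\dd u$. Your version is in fact more complete than the paper's, which silently ignores the singularity of $1/|\lambda_-(k)-\lambda_+(k)|$ at the transition point $|\sin(\pi k_0)|=\gamma/2$; your splitting at $|\sin(\pi k)|=\gamma/4$ and the excision near $k_0$ are precisely what is needed, and your caveat that the Riemann-sum error forces $t\lesssim n^2$ is both correct and harmless for the application (indeed, for $\ell=0$ the $k=0$ term alone contributes $\sim 1/(n+1)$ for all $t$, so the stated bound cannot hold uniformly for $t\gg n^2$, but only arguments $t\le n^2 t_*$ are ever used).
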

\begin{proof} We prove the result for $\ell=2$. The argument in the remaining
cases is similar.
It suffices only to show that 
\begin{equation}
\label{010112-19a}
\underset{{k\in \widehat{\T}_n}}{{\hat\sum}}\sin^2(\pi
  k) \exp\Big\{-\frac{2t\sin^2(\pi
  k)}{\ga}\Big\}\lesssim
\frac{1}{(1+t)^{3/2}},\qquad t\ge0,\,n\ge 1.
\end{equation}
Since the left hand side of \eqref{010112-19a} is obviously bounded it
suffices to show that 
\begin{equation}
\label{010112-19b}
\underset{{k\in \widehat{\T}_n}}{{\hat\sum}}\sin^2(\pi
  k) \exp\Big\{-\frac{2t\sin^2(\pi
  k)}{\ga}\Big\}\lesssim
\frac{1}{t^{3/2}}, \qquad t\ge0,\,n\ge 1,
\end{equation}
which follows easily from the fact that  $\sin^2(\pi
  k)\sim k^2$, as $|k|\ll 1$.
\end{proof}

From \eqref{070411-19x} we get that for any $t_*>0$
\begin{align}
  \label{070411-19xs}
  & 
\sup_{x\in\T_n}|\bar p_x(t)|\mathbf{1}_{[0,t_*]}(t)
   \le \mathrm{I}_{n}(t)+ \mathrm{II}_{n}(t)+ \mathrm{III}_{n}(t),
\end{align}
where
\begin{align*}
\mathrm{I}_{n}(t)&:=\bar Q_{0}^{(n)}\big(n^2t\big) \mathbf{1}_{[0,t_*]}(t), \vphantom{\bigg(} \\
\mathrm{II}_{n}(t)&:=\int_0^tq_{1}^{(n)}\big(t-s\big)\big|\bar
    p_{0,n}^{\rm diff}(s) \big| \mathbf{1}_{[0,t_*]}(s)\dd s,\notag\\
 \mathrm{III}_{n}(t)&:=\int_0^tq_{2}^{(n)}\big(t-s\big)
    \big|\bar p_{0,n}^{\rm sum}(s)\big|\mathbf{1}_{[0,t_*]}(s) \dd s
\end{align*}
and 
 $q_{\ell}^{(n)}(t):= n^2\bar Q_\ell^{(n)}(n^2t)\mathbf{1}_{[0,t_*]}(t)$, for
 $\ell=1,2$. 
Therefore, from Lemma \ref{lm010112-19} we get
$$
\|\mathrm{I}_{n}\|_{L^2[0,+\infty)}^2=\int_0^{t_*}\big|\bar Q_{0}^{(n)}\big(n^2t\big) \big|^2\dd
t \lesssim \frac{1}{n^2}\int_0^{n^2t_*}\frac{\dd t}{1+t}\lesssim  \frac{\log (n+1)}{n^2}.
$$
We also have
\begin{align*}
\|q_{1}^{(n)}\|_{L^1[0,+\infty)}&= n^2\int_0^{t_*}\bar Q_1^{(n)}(n^2t)\dd t
\lesssim \int_0^{n^2t_*}\frac{\dd t}{1+t}\lesssim \log (n+1), \\
\|q_{2}^{(n)}\|_{L^1[0,+\infty)}	&=n^2\int_0^{t_*}\bar Q_2^{(n)}(n^2t)\dd t
\lesssim  \int_0^{n^2t_*}\frac{\dd t}{(1+t)^{3/2}}\lesssim 1, \qquad n\ge1.
\end{align*}
In order to estimate $|\bar
    p_{0,n}^{\rm diff}|$, recall Proposition \ref{prop012812-19}: using \eqref{022812-19a} and  the Young inequality for convolution we obtain
$$
\|\mathrm{II}_{n}\|_{L^2[0,+\infty)}^2\le \big\|q_{1}^{(n)}\big\|_{L^1[0,+\infty)}^2 \big\|\bar
    p_{0,n}^{\rm diff} \mathbf{1}_{[0,t_*]}\big\|_{L^2[0,+\infty)}^2 \lesssim \frac{\log^2 (n+1)}{n^2}, 
$$
and
$$
\|\mathrm{III}_{n}\|_{L^2[0,+\infty)}^2\le \big\|q_{2}^{(n)}\big\|_{L^1[0,+\infty)}^2 \big\|\bar
    p_{0,n}^{\rm sum} \mathbf{1}_{[0,t_*]}\big\|_{L^2[0,+\infty)}^2 \lesssim \frac{\log (n+1)}{n^2}.
$$
Thus, the conclusion of Lemma \ref{lm010911-19} follows. \qed

\subsection{Proof of Lemma \ref{lm010301-20}}

\label{sec6.5}

To prove Lemma \ref{lm010301-20} we need to get an expression for 
\[\sum_{x=0}^{n-1}\big(\bar r_{x+1}^{(n)}(t)-\bar
r_x^{(n)}(t)\big)^2. \]
Using \eqref{eq:qdynamics-bar} and then summing by parts we get
\begin{multline*}
\frac{\dd}{\dd t}\left(\frac{1}{n+1}\sum_{x\in\T_n}\bar
  r_x^2(t)\right)=\frac{n^2}{n+1}\sum_{x=1}^{n}\bar r_x(t)\big(\bar p_x(t)-\bar
  p_{x-1}(t)\big)\\
=\frac{n^2}{n+1}\sum_{x=1}^{n-1}\bar p_x(t)\big(\bar r_x(t)-\bar
  r_{x+1}(t)\big)+\frac{n^2}{n+1}\bar p_n(t) \bar
  r_n(t)-\frac{n^2}{n+1}\bar p_0(t) \bar r_1(t).
\end{multline*}
Computing  $\bar p_x(t)$ from \eqref{eq:pdyn-bar} we rewrite the
utmost right hand side as
\begin{align*}
&
-\frac{n^2}{2\ga(n+1)}\sum_{x=1}^{n-1}\big(\bar r_x(t)-\bar
  r_{x+1}(t)\big)^2
  -\frac{1}{2\ga(n+1)}\sum_{x=1}^{n-1}\frac{\dd\bar p_x(t)}{\dd t}\big(\bar r_x(t)-\bar
  r_{x+1}(t)\big)\\
&
+\frac{n^2}{n+1}\bar p_n(t) \bar
  r_n(t)-\frac{n^2}{n+1}\bar p_0(t) \bar r_1(t).
\end{align*}
Therefore, after integration by parts in the temporal variable, we get
\begin{align*}
&
\frac{n^2}{2\ga(n+1)}\int_0^t\sum_{x=1}^{n-1}\big(\bar r_x(s)-\bar
  r_{x+1}(s)\big)^2\dd s
=\frac{1}{n+1}\sum_{x\in\T_n}
  \big(\bar r_x^2(t)-\bar r_x^2(0)\big)
 \\
& \quad
 -\frac{n^2}{2\ga(n+1)}\int_0^t\sum_{x=1}^{n-1}\bar p_x(s)\big(\bar  p_{x+1}(s)+\bar p_{x-1}(s) - 2 \bar p_x(s)\big)\dd s\\
&\quad
 -\frac{1}{2\ga(n+1)}\bigg(\sum_{x=1}^{n-1}\bar p_x(t)\big(\bar r_x(t)-\bar
  r_{x+1}(t)\big)-\sum_{x=1}^{n-1}\bar p_x(0)\big(\bar r_x(0)-\bar
  r_{x+1}(0)\big)\bigg)\\
&\quad
+\frac{n^2}{n+1}\int_0^t\bar p_n(s) \bar
  r_n(s)\dd s-\frac{n^2}{n+1}\int_0^t\bar p_0(s) \bar r_1(s)\dd s.
\end{align*}
Using \eqref{053112-19z} from Proposition \ref{cor013112-19z} we conclude that the first and third expressions in the right hand side stay
bounded, as $n\ge1$.
Summing by parts we conclude that 
the second expression equals
\begin{align*}
&
\frac{n^2}{2\ga(n+1)}\int_0^t\sum_{x=1}^{n-2}\big(\bar
  p_{x+1}(s)- \bar  p_{x}(s)\big)^2\dd s\\
&
+\frac{n^2}{2\ga(n+1)}\int_0^t\Big(\bar p_1(s)\big(\bar
  p_1(s)- \bar  p_0(s)\big)-\bar p_{n-1}(s)\big(\bar
  p_n(s)- \bar  p_{n-1}(s)\big)\Big)\dd s.
\end{align*}
The expression stays bounded, due to  Proposition \ref{cor013112-19z}--\eqref{010301-20}.

Multiplying \eqref{eq:rbd-bar} by $\bar p_0(t)$ and integrating we get
\begin{equation}\label{eq:this}
 \frac{1}{2n}\big(\bar   p^2_0(t)-\bar   p^2_0(0)\big) =   n\int_0^t\bar r_1(s)  \bar   p_0(s)\dd s                                            
      -  n(2\gamma+\tilde \gamma) \int_0^t \bar   p_0^2(s)\dd s.
\end{equation}
From here, thanks to Proposition \ref{cor013112-19z}--\eqref{010301-20} (which controls $\int \bar p_0^2$) and thanks to Proposition \ref{prop022111-19} (which controls the left hand side of \eqref{eq:this} since $\bar p_0^2(t) \leqslant 2\bar{\mathcal{E}}_0(t)$ by convexity),  we conclude that
$$
n\left|\int_0^t\bar r_1(s)  \bar   p_0(s)\dd s  \right|\lesssim 1.
$$
Multiplying \eqref{eq:pbd-bar} by $\bar p_n(t)$ and integrating we get
\begin{align*}
 \frac{1}{2n} \big( \bar   p^2_n(t)-\bar   p^2_n(0)\big) &= n\bar \tau_+\int_0^t  \bar   p_n(s)\dd s      -\;  n\int_0^t\bar r_n(s)  \bar   p_n(s)\dd s                                            
      -  n(2\gamma+\tilde \gamma) \int_0^t \bar   p_n^2(s)\dd s.
\end{align*}
From here, thanks to Proposition \ref{cor013112-19z}--\eqref{010301-20}, Proposition \ref{lm010911-19a}  and Proposition \ref{prop022111-19} we conclude that
$$
n\left|\int_0^t\bar r_n(s)  \bar   p_n(s)\dd s  \right|\lesssim 1.
$$
 This ends the proof of \eqref{060301-20}.
\qed

\section{Entropy production}

\label{sec6}

This section is mainly devoted to proving Proposition \ref{prop022111-19}, which will be concluded in Section \ref{sec:endproof-1}. For that purpose, we will obtain an \emph{entropy production bound} stated in Proposition \ref{cor021211-19} below, in Section \ref{sec7entropy}. Its proof uses Proposition \ref{lm010911-19a}. This new result, together with the estimates given in Proposition \ref{lm010911-19a} and Proposition \ref{cor013112-19z}, will also allow us to conclude the proof of Lemma  \ref{lem:bound2} (which give all boundary estimates), in Section \ref{sec:conseque}.

\subsection{Entropy production bound}

\label{sec7entropy}

To prove Proposition \ref{prop022111-19}, we need to show that
$$
 \sup_{s\in[0,t]}\sum_{x \in \T_n} \E_n\big[ \mathcal{E}_x(s) \big]
$$
 grows at most
linearly in $n$.
We first relate this quantity to the \emph{entropy production}, as
follows: recall that $f_n(t)$ is the density of
the distribution $\mu_n(t)$ of $({\bf r}(t),{\bf p}(t))$ with respect to $\nu_T$, see \eqref{eq:nuT}.
We denote the expectation with respect to $\nu_T$ by  $\llangle \cdot \rrangle_T$.
Given a density $F\in L^2(\nu_T)$ we define the relative entropy
$H_{n,T}[F]$ of
the measure $\dd \mu:=F\dd \nu_T$, with respect to  $\nu_T$ by 
\begin{equation}
\label{rel-ent}
H_{n,T}[F]:= \llangle F\log F \rrangle_T= \int_{\Om_n} F\log F \dd \nu_{T}.
\end{equation}
We interpret $F\log  F=0$, whenever $F=0$. Finally, we denote \begin{equation}
  \label{eq:7a}
\mathbf{H}_{n,T}(t) := H_{n,T}[f_n(t)].
\end{equation}
Then, by virtue of the entropy inequality, see e.g.~
  \cite[p.~338]{KL} (and also \eqref{entropy-in-t} below), and from our assumption \eqref{eq:ass0} on the initial condition, 
we conclude that: for any $\al>0$ we can find $C_\al>0$ such that
\begin{equation}\label{eq:boundent}
 \E_n\left[\sum_{x\in\T_n}{\cal E}_x(t)\right]
  \le \frac{1}{\al}\big(C_\al n+ \mathbf{H}_{n,T}(t)\big),\qquad t\ge0.
 \end{equation}
This reduces the problem to showing a linear bound on $\mathbf{H}_{n,T}(t)$, which is the main result of this section.
\begin{proposition}
\label{cor021211-19}
Under Assumptions \ref{ass1}, 
for any $t,\,T>0$  we have
\begin{equation}
  \label{eq:10bb}
 \sup_{s\in[0,t]} \mathbf{H}_{n,T}(s) \lesssim  n,\qquad n\ge1.
\end{equation}
\end{proposition}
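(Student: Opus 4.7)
The plan is to obtain a differential inequality for $\mathbf{H}_{n,T}(t)$ by writing out the entropy production, identifying which contributions are non-positive (dissipation) and which are boundary/force terms that must be controlled, and then using Proposition~\ref{lm010911-19a} (plus a bootstrap at the boundary) to bound everything by $Cn$ after integration in time.

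First I would compute $\tfrac{d}{dt}\mathbf{H}_{n,T}(t) = \int (L \log f_n) f_n\,d\nu_T$ and decompose $L = n^2 A + \gamma n^2 S + \tilde\gamma n^2 \tilde S$, evaluating each piece relative to the homogeneous Gaussian $\nu_T$. The flip generator $S$ is symmetric w.r.t.\ $\nu_T$, so it contributes a non-positive Dirichlet form. Writing $A = A^{(0)} + \bar\tau_+\partial_{p_n}$, a direct computation with the telescoping structure of the Hamiltonian yields $A^{(0)}\mathcal{H}_n = 0$, so $A^{(0)}$ preserves $\nu_T$ and contributes nothing; the force part gives $n^2\bar\tau_+ \bar p_n(t)/T$. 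For $\tilde S$, integration by parts against the Gaussian weight in the $p_0,p_n$-variables produces
\begin{equation*}
\int (\tilde S \log f_n) f_n\,d\nu_T = \sum_{x=0,n}\left[\frac{T_x-T}{T^2}\bigl(\mathbb E_n[p_x^2(t)] - T\bigr) - T_x \mathcal I_x(t)\right],
\end{equation*}
where $\mathcal I_x(t) = \int (\partial_{p_x} f_n)^2/f_n\,d\nu_T \ge 0$ is the boundary Fisher information.

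Integrating over $[0,t]$ and dropping the non-positive Dirichlet and Fisher terms gives
\begin{equation*}
\mathbf{H}_{n,T}(t) \le \mathbf{H}_{n,T}(0) + \frac{n^2\bar\tau_+}{T}\!\int_0^t \!\bar p_n(s)\,ds + \frac{\tilde\gamma n^2}{T^2}\sum_{x=0,n}(T_x-T)\!\int_0^t\!\bigl(\mathbb E_n[p_x^2(s)] - T\bigr)\,ds.
\end{equation*}
The initial entropy $\mathbf{H}_{n,T}(0)\lesssim n$ follows from the hypothesis $\tilde{\mathbf H}_n(0)\lesssim n$ in \eqref{eq:ass0}: the log Radon--Nikodym derivative $\log(d\nu_T/d\tilde\nu)$ is affine in $\mathcal E_x$ and $r_x$, both of which are $O(n)$ in total $L^1$-norm by Assumption~\ref{ass1}~\eqref{E0} and Cauchy--Schwarz. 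The force contribution is bounded by $Cn$ using the momentum damping $|\!\int_0^t \bar p_n\,ds|\lesssim 1/n$ from Proposition~\ref{lm010911-19a}.

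The hard part is controlling the temperature-mismatch integrals $\int_0^t(\mathbb E_n[p_x^2(s)]-T)\,ds$ for $x=0,n$. I would choose $T$ so one of the mismatch coefficients vanishes (e.g.\ $T=T_-$, so the $x=0$ term drops out), and then apply It\^o's formula to $p_n^2$ using the boundary SDE \eqref{eq:qdynamicsbound} to get the Duhamel-type identity
\begin{equation*}
\int_0^t\bigl(\mathbb E_n[p_n^2(s)] - T_+\bigr)\,ds = \frac{\mathbb E_n[p_n^2(0)] - \mathbb E_n[p_n^2(t)]}{2\tilde\gamma n^2} - \frac{1}{\tilde\gamma}\int_0^t\mathbb E_n[p_{n-1}(s)r_n(s)]\,ds + \frac{\bar\tau_+}{\tilde\gamma}\int_0^t\bar p_n(s)\,ds.
\end{equation*}
The last integral is $O(1/n)$ by Proposition~\ref{lm010911-19a}, and the first term is $O(1/n)$ via the initial energy bound together with a bootstrap using the general entropy inequality $\mathbb E_n[p_n^2(t)]\le 2(C_\alpha n + \mathbf H_{n,T}(t))/\alpha$. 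The remaining cross-correlation $\int_0^t \mathbb E_n[p_{n-1} r_n]\,ds$ is the delicate piece; the intended control is to use a Young-type inequality, combined with the Gaussian log-Sobolev inequality bounding $|\mathbb E_n[p_n^2]-T|$ by a constant plus the Fisher information $\mathcal I_n$, so that the retained dissipation $-\tilde\gamma n^2 T_n \int_0^t\mathcal I_n\,ds$ absorbs the mismatch. The final Gronwall step then yields $\mathbf{H}_{n,T}(t)\lesssim n$, uniformly in $s\in[0,t]$.
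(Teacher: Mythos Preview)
Your computation of the entropy production relative to $\nu_T$ is correct, but the approach does not close: the temperature-mismatch term is genuinely of order $n^2$, not $n$. With $T=T_-$ the quantity you must control is
\[
\frac{\tilde\gamma n^2(T_+-T_-)}{T_-^2}\int_0^t\bigl(\mathbb E_n[p_n^2(s)]-T_-\bigr)\,ds,
\]
whereas your Duhamel identity (which, by the way, should involve $\mathbb E_n[p_n r_n]$, not $\mathbb E_n[p_{n-1}r_n]$) computes $\int_0^t(\mathbb E_n[p_n^2]-T_+)\,ds$. The conversion from $T_+$ to $T_-$ adds $(T_+-T_-)t$, and after multiplication by $n^2$ this is a term of size $n^2$ that nothing absorbs. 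Retaining the Fisher information does not help either: the bound $|\mathbb E_n[p_n^2]-T_-|\le T_-(\mathbb E_n[p_n^2])^{1/2}\mathcal I_n^{1/2}$ followed by Young's inequality still leaves a remainder $Cn^2\int_0^t\mathbb E_n[p_n^2]\,ds\sim n^2$, because the thermostat keeps $\mathbb E_n[p_n^2]$ near $T_+\neq T_-$. The same obstruction persists for any fixed $T$, since the two Langevin generators act with strength $\tilde\gamma n^2$ and are tuned to different temperatures.

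The paper sidesteps this by computing the entropy production relative to the \emph{inhomogeneous} product measure $\tilde\nu$ of \eqref{tilde-nu}, whose boundary marginals already sit at $T_\pm$. Then the Langevin contribution is purely dissipative (no mismatch term at all), and the price is that the Hamiltonian flow no longer preserves the reference measure: it produces the bulk current term $-(T_+^{-1}-T_-^{-1})\,n\sum_x\mathbb E_n[j_{x,x+1}]$ in \eqref{eq:10}. The crucial gain is the single factor $n$ (coming from $\beta_{x+1}-\beta_x=O(1/n)$); after the fluctuation--dissipation relation \eqref{eq:fd1-n} this reduces to boundary terms of size $n$, handled by Lemma~\ref{lm011401-20} and a Gronwall argument. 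The bound on $\mathbf H_{n,T}$ then follows from the one on $\tilde{\mathbf H}_n$ via the comparison \eqref{eq:9-0}.
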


In order to prove Proposition  \ref{cor021211-19} (which will be
achieved in Section \ref{sec:endproof}), we first introduce another
relative entropy which takes into account the boundary temperatures
fixed at $T_-$ and $T_+$ and explains how relate them to each other.  

\subsubsection{Relative entropy of an inhomogeneous product measure}

Recall the definition of the non-homogeneous product measure $\tilde\nu$ given in \eqref{tilde-nu} and of the density $\tilde f_n(t)$ given in \eqref{eq:tildef}. 
The relative entropies $ \tilde{\mathbf{H}}_n(t)$ (defined in \eqref{eq:7}) and $\mathbf{H}_{n,T}(t)$ (defined in \eqref{rel-ent}) are related by the following formula.
\begin{proposition}
\label{prop-rel-ent}
For any $T>0$ and $n\ge1$ we have 
\begin{align}
  \mathbf{H}_{n,T}(t) = \tilde{\mathbf{H}}_n(t) & - \int_{\Om_n}  \sum_{x\in\T_n} \left(\left(\beta_x - T^{-1}\right) \mathcal E_x
    - \beta_x\bar\tau_+ r_x\right) \tilde f_n(t)  \dd \tilde{ \nu} \notag\\ 
   & - \sum_{x=1}^n \left(\mathcal G(\beta_x,\bar\tau_+) - \mathcal
      G(T^{-1},0)\right) -\frac12 \log \left(T_- T^{-1}\right),\quad t\ge0.\label{eq:8-0} 
 \end{align}
In addition, for any $t_*>0$
\begin{equation}
  \label{eq:9-0}
 \mathbf{H}_{n,T}(t) \lesssim \tilde{\mathbf{H}}_n(t) +  n,\qquad n\ge1,\,t\in[0,t_*].
\end{equation}
\end{proposition}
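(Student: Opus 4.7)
The plan is to first establish the identity \eqref{eq:8-0} by a direct Radon--Nikodym calculation, and then to deduce the bound \eqref{eq:9-0} via a single application of the classical entropy inequality combined with Gaussian integrability.

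\textbf{Step 1 (Radon--Nikodym derivative).} First I would compute $\log(\dd\nu_T/\dd\tilde\nu)$ site by site from the explicit definitions \eqref{eq:nuT} and \eqref{tilde-nu}. For each $x\ge 1$, comparing the two Gibbs weights and using \eqref{eq:16} gives the pointwise contribution $(\beta_x-T^{-1})\mathcal E_x-\beta_x\bar\tau_+ r_x+\mathcal G(\beta_x,\bar\tau_+)-\mathcal G(T^{-1},0)$, while the single-oscillator ratio at $x=0$ (using $r_0\equiv0$, $\beta_0=T_-^{-1}$ and $\mathcal E_0=p_0^2/2$) contributes $(\beta_0-T^{-1})\mathcal E_0+\tfrac12\log(T_- T^{-1})$. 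Collecting these terms yields
\begin{equation*}
\log\frac{\dd\nu_T}{\dd\tilde\nu}=\sum_{x\in\T_n}\big((\beta_x-T^{-1})\mathcal E_x-\beta_x\bar\tau_+ r_x\big)+\sum_{x=1}^n\big(\mathcal G(\beta_x,\bar\tau_+)-\mathcal G(T^{-1},0)\big)+\tfrac12\log(T_- T^{-1}).
\end{equation*}

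\textbf{Step 2 (Identity).} Since $\tilde f_n(t)=f_n(t)\,\dd\nu_T/\dd\tilde\nu$ by \eqref{eq:tildef}, the standard change of measure produces
\begin{equation*}
\mathbf H_{n,T}(t)=\int_{\Om_n} f_n(t)\log f_n(t)\,\dd\nu_T=\tilde{\mathbf H}_n(t)-\int_{\Om_n}\tilde f_n(t)\log\frac{\dd\nu_T}{\dd\tilde\nu}\,\dd\tilde\nu,
\end{equation*}
into which I plug the formula from Step 1; this is precisely \eqref{eq:8-0}.

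\textbf{Step 3 (Bound).} To pass to \eqref{eq:9-0} I would apply the classical entropy inequality to the integral term. Writing $g:=\sum_{x\in\T_n}((\beta_x-T^{-1})\mathcal E_x-\beta_x\bar\tau_+ r_x)$, the middle contribution of \eqref{eq:8-0} equals $-\int\tilde f_n(t)g\,\dd\tilde\nu$, which is dominated by $\alpha^{-1}\tilde{\mathbf H}_n(t)+\alpha^{-1}\log\int e^{-\alpha g}\,\dd\tilde\nu$ for any $\alpha>0$ rendering the last integral finite. Because $\tilde\nu$ is a product of one-dimensional Gaussians whose inverse temperatures $\beta_x$ lie in the fixed compact interval $[\min(T_-^{-1},T_+^{-1}),\max(T_-^{-1},T_+^{-1})]$, choosing $\alpha>0$ small enough (depending only on $T,T_\pm,\bar\tau_+$) keeps the coefficients of $p_x^2$ and $r_x^2$ in the total exponent of $e^{-\alpha g}\,\dd\tilde\nu$ strictly negative, uniformly in $x$. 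Factorization across sites then gives $\log\int e^{-\alpha g}\,\dd\tilde\nu\lesssim n$, and since $\mathcal G(\beta,\tau)$ is continuous on compact subsets of $(0,\infty)\times\RR$, the deterministic correction $\sum_{x=1}^n(\mathcal G(\beta_x,\bar\tau_+)-\mathcal G(T^{-1},0))$ is also $O(n)$. Combining these with \eqref{eq:8-0} yields \eqref{eq:9-0}.

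\textbf{Expected obstacle.} The only delicate point is verifying the Gaussian integrability of $e^{-\alpha g}$ against $\tilde\nu$: this reduces to the explicit quadratic condition $\alpha(T^{-1}-\beta_x)_+<\beta_x$, which holds uniformly in $x$ and $n$ as soon as $\alpha$ is small enough relative to the fixed boundary temperatures $T_\pm$. Everything else is essentially bookkeeping on ratios of explicit Gaussian densities, and in particular neither Assumption \ref{ass1} nor the a priori energy bound of Proposition \ref{prop022111-19} is needed here --- which is crucial, since Proposition \ref{prop-rel-ent} feeds into the proof of the entropy bound \eqref{eq:10bb} that is itself used to establish \eqref{022111-19}.
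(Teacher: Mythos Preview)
Your proposal is correct and follows essentially the same approach as the paper's own proof: the identity \eqref{eq:8-0} is obtained there ``by a direct calculation'' (your Steps~1--2 simply spell this out), and the bound \eqref{eq:9-0} is derived exactly as you do, by applying the entropy inequality to $-\int \tilde f_n(t)\,g\,\dd\tilde\nu$ after observing that $\sup_{n\ge1}\tfrac1n\log\int_{\Om_n}e^{-\alpha g}\,\dd\tilde\nu<\infty$ for $\alpha>0$ small enough. Your remark that no a priori energy bound is required here is also correct and relevant.
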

\begin{proof}
Formula \eqref{eq:8-0} can be obtained by a direct calculation.
To prove the bound \eqref{eq:9-0} note first that
one can choose a sufficiently small $\al>0$  so that
\begin{equation}
\label{030611-19}
\sup_{n\ge1}\frac1n\log\left\{\int_{\Om_n}
     \exp\Big\{-\al \sum_{x\in\T_n} \left(\left(\beta_x - T^{-1}\right) \mathcal E_x
    - \beta_x\bar\tau_+ r_x\right) 
  \Big\}\tilde{\nu} (\dd\mathbf r, \dd\mathbf p)\right\}=:C_\al<+\infty.
\end{equation}
From the entropy inequality  we write 
\begin{align*}
&-\sum_{x\in\T_n}\int_{\Om_n}  \left(\left(\beta_x - T^{-1}\right) \mathcal E_x
    - \beta_x\bar\tau_+ r_x\right) \tilde f_n(t) \dd \tilde{\nu}
\le \frac{1}{\al}\left( C_\al n+\tilde{\mathbf{H}}_n(t)\right).
\end{align*}
Thus \eqref{eq:9-0}
follows from \eqref{eq:8-0}.\end{proof}

\subsubsection{Estimate of $\tilde{\mathbf{H}}_{n}(t)$}
Next step consists in estimating $\tilde{\mathbf{H}}_{n}(t)$ by computing its derivative.
Using the regularity theory for solutions of stochastic differential
equations and Duhamel formula, see e.g.~Section 8 of \cite{bo2}, we
can argue that $\tilde f_n(t, {\bf r},{\bf p})$ is
twice continuously differentiable in $({\bf r},{\bf p})$ and once in
$t$,
provided that $\tilde f_n(0)\in C^2(\Om_n)$, which is the case, due to  \eqref{eq:ass0}. 
Using the dynamics \eqref{eq:qdynamicsbulk}--\eqref{eq:qdynamicsbound} we   therefore obtain:
\begin{align}
  \label{eq:10}
 \tilde{\mathbf{H}}_n'(t) 
=-\left(T_+^{-1} - T_-^{-1}\right) n\sum_{x=0}^{n-1}  \bbE_n\Big[ j_{x,x+1}(t)\Big]
+n^2 T_+^{-1} \bar\tau_+\bar p_n^{(n)}(t) - n^2 \mathbf D(\tilde f_n(t)),
\end{align}
where $j_{x,x+1} (t):= j_{x,x+1} ({\bf r}(t), {\bf p}(t))$, with $j_{x,x+1}$ given in \eqref{jx}, and
the operator $\mathbf{D}$ is defined for any $F\ge0$ such that
    $F\log F\in L^1(\tilde \nu)$ and $(\nabla_{\bf
    p}F)^{1/2}\in L^2(\tilde \nu)$, by
\begin{equation}
  \label{eq:11Dtilde}
  \mathbf D(F) := \gamma \sum_{x\in\T_n} {\mc D}_{x,\beta} (F) +\tilde\gamma  \sum_{x=0,n}
 T_x \; \int_{\Om_n}  \frac{\big(\partial_{p_x} F\big)^2}{F} \dd \tilde\nu,
\end{equation}
with
\begin{equation}  
    \mc D_{x,\beta}(F) := - \int_{\Om_n}F({\mathbf r},{\mathbf p}) \log\bigg(\frac{F({\mathbf r},{\mathbf p}^x
          )}{F({\mathbf r},{\mathbf p})} \bigg)\dd\tilde \nu.
    \end{equation} 
It is standard to show, using the inequality $a \log(b/a) \leqslant 2 \sqrt{a}(\sqrt b -\sqrt a)$ for any $a,b>0$, that: for any positive, measurable function $F$  on $\Omega_n$ satisfying $\int_{\Omega_n} F(\bf r,\bf p)\dd\tilde\nu=1$, and any $x \in \T_n$, 
\begin{equation} \mc D_{x,\beta}(F)  \geqslant \int_{\Om_n} \big(\sqrt{F({\mathbf r},{\mathbf
      p}^x)} - \sqrt{F({\mathbf r},{\mathbf p})}\big)^2 \dd\tilde \nu \geqslant 0. \label{eq:dirich-pos}
\end{equation}
The main result of this section is the following:
\begin{proposition}[Entropy production]
\label{thm-entropy-production} 
For any $t>0$ we have
\begin{equation}
 \int_0^t \mathbf D(\tilde f_n(s))\dd s \lesssim \frac{1}{n^2}\left(\tilde{\mathbf{H}}_n(0) +n\right)\label{eq:88t}
\end{equation}
and
\begin{equation}
\sup_{s\in[0,t]}\tilde{\mathbf{H}}_n(s) \lesssim \tilde{\mathbf{H}}_n(0) + n, \qquad n\ge1.\label{eq:88tt}
\end{equation}
\end{proposition}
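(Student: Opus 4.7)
I plan to integrate the entropy production identity \eqref{eq:10} over $[0,t]$, giving
\begin{equation*}
\tilde{\mathbf H}_n(t) + n^2\int_0^t\mathbf D(\tilde f_n(s))\,ds = \tilde{\mathbf H}_n(0) + I(t) + J(t),
\end{equation*}
where $I(t) := -(T_+^{-1}-T_-^{-1})n\int_0^t\sum_{x=0}^{n-1}\bbE_n[j_{x,x+1}(s)]\,ds$ and $J(t) := n^2T_+^{-1}\bar\tau_+\int_0^t\bar p_n^{(n)}(s)\,ds$. The term $J$ is bounded by $Cn$ at once via Proposition \ref{lm010911-19a}. All the work is to show the current term obeys $|I(t)|\le \tfrac12\tilde{\mathbf H}_n(t)+C(n+\int_0^t\tilde{\mathbf H}_n(s)\,ds) + \tfrac12 n^2\int_0^t\mathbf D(\tilde f_n)\,ds$, so that the Dirichlet contribution can be absorbed on the left and Gronwall's lemma yields \eqref{eq:88tt}; the Dirichlet bound \eqref{eq:88t} then follows by feeding \eqref{eq:88tt} back into the same energy balance.

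For $I(t)$, I would first use the bulk fluctuation-dissipation relation \eqref{eq:fd1-n} summed over $x=1,\dots,n-2$ together with the boundary energy balances \eqref{eq:en-evol-}--\eqref{eq:en-evol+} to write
\begin{equation*}
\int_0^t\sum_{x=0}^{n-1}\bbE_n[j_{x,x+1}(s)]\,ds = \frac{1}{n^2}\bbE_n\bigl[G_\mathrm{bulk}(t)-G_\mathrm{bulk}(0)\bigr] - \int_0^t\bbE_n[V_{n-1}-V_1]\,ds,
\end{equation*}
with $G_\mathrm{bulk} := \sum_{x=1}^{n-2}g_x$. The standard entropy inequality, applied to the product structure of $\tilde\nu$, gives $|\bbE_n[G_\mathrm{bulk}]|\lesssim\tilde{\mathbf H}_n+n$, contributing harmlessly through the $n^{-2}$ factor. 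Everything then reduces to bounding $n\int_0^t\bbE_n[V_1(s)]\,ds$ and $n\int_0^t\bbE_n[V_{n-1}(s)]\,ds$.

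The key device is a secondary fluctuation-dissipation identity that converts the stretch variable $r_1^2$ (for which $\mathbf D$ has no gradient) into a Langevin-dissipated momentum. A direct generator calculation gives $L(r_1p_0)/n^2 = r_1^2+(p_1-p_0)p_0-(2\gamma+\tilde\gamma)r_1p_0$, and substituting into $V_1 = (r_1^2+p_1p_0)/(4\gamma)$ the $p_1p_0$ cancels and leaves
\begin{equation*}
V_1 = \frac{1}{4\gamma}\Bigl[\frac{L(r_1p_0)}{n^2} + p_0^2 + (2\gamma+\tilde\gamma)r_1p_0\Bigr].
\end{equation*}
Upon integration the $L$-term yields only an $O(n^{-1}(\tilde{\mathbf H}_n+1))$ boundary residue; $n\int\bbE_n[p_0^2]$ is handled via the Langevin identity $n^2\tilde\gamma\int_0^t(T_--\bbE_n[p_0^2])\,ds = \bbE_n[\mathcal E_0(t)-\mathcal E_0(0)] + n^2\int_0^t\bbE_n[j_{0,1}]\,ds$; and the remaining terms $n\int\bbE_n[r_1p_0]$ and $n\int\bbE_n[j_{0,1}]=-n\int\bbE_n[p_0r_1]$ are bounded by integration by parts in $p_0$: from $p_0\tilde\nu = -\beta_0^{-1}\partial_{p_0}\tilde\nu$, Cauchy--Schwarz and the bound $\int(\partial_{p_0}F)^2/F\,d\tilde\nu\le \mathbf D(F)/(\tilde\gamma T_-)$, together with the pointwise Fenchel bounds $\bbE_n[r_1^2],\bbE_n[p_1^2]\lesssim\tilde{\mathbf H}_n+1$, yield for every $\eta>0$
\begin{equation*}
\Bigl|n\int_0^t\bbE_n[p_0r_1]\,ds\Bigr| + \Bigl|n\int_0^t\bbE_n[p_0p_1]\,ds\Bigr| \le \frac{C}{\eta}\int_0^t(\tilde{\mathbf H}_n+1)\,ds + \eta n^2\int_0^t\mathbf D(\tilde f_n)\,ds.
\end{equation*}

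The bound for $V_{n-1}$ is symmetric: \eqref{eq:fd1-n} at $x=n-1$ gives $V_{n-1} = V_n - n^{-2}Lg_{n-1} + j_{n-1,n}$, and the analogous identity $V_n = (4\gamma)^{-1}[-L(r_np_n)/n^2 + p_n^2 + r_n\bar\tau_+ - (2\gamma+\tilde\gamma)r_np_n]$ reduces matters to boundary quantities at $x=n$ plus the single new term $n\bar\tau_+\int_0^t\bar r_n^{(n)}(s)\,ds$. For this last piece I would exploit the ODE identity coming directly from \eqref{eq:pbd-bar},
\begin{equation*}
\bar r_n(s)-\bar\tau_+ = -\frac{\dot{\bar p}_n(s)}{n^2} - (2\gamma+\tilde\gamma)\bar p_n(s),
\end{equation*}
which integrated in time gives $n\int_0^t(\bar r_n-\bar\tau_+)\,ds = -(\bar p_n(t)-\bar p_n(0))/n - (2\gamma+\tilde\gamma)n\int_0^t\bar p_n\,ds = O(1)$ by Proposition \ref{lm010911-19a} and the pointwise bound $|\bar p_n|\lesssim\sqrt n$ extracted from Proposition \ref{cor013112-19z}; hence $n\bar\tau_+\int_0^t\bar r_n^{(n)}\,ds = n\bar\tau_+^2 t + O(1) = O(n)$. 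Collecting all the pieces, choosing $\eta$ small, and applying Gronwall's lemma yields \eqref{eq:88tt}, and \eqref{eq:88t} follows. The main obstacle is the absence of a stretch-gradient in $\mathbf D$, which forbids a direct Cauchy--Schwarz bound on $r_1^2$ and $r_n^2$; the secondary fluctuation-dissipation identities, together with the ODE identity for $\bar r_n-\bar\tau_+$ above, are precisely what overcome it.
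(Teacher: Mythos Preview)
Your approach is essentially the same as the paper's: integrate the entropy identity, bound the forcing term $J$ by Proposition~\ref{lm010911-19a}, decompose the current term via the fluctuation--dissipation relation into a bulk piece (handled by the entropy inequality) and boundary pieces, convert the boundary stretch contributions into Langevin-dissipated momenta via the generator identity $n^{-2}L(p_0r_1)=r_1^2+(p_1-p_0)p_0-(2\gamma+\tilde\gamma)p_0r_1$ (which is exactly \eqref{eq:a} in the paper), bound the resulting $p_0r_1$, $p_np_{n-1}$ terms by Cauchy--Schwarz against $\mathbf D$, and close with Gronwall. The paper packages the boundary estimates into Lemma~\ref{lm011401-20}; your secondary fluctuation--dissipation identities are equivalent rearrangements of that lemma's proof.

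Two minor remarks. First, your displayed formula for $\int_0^t\sum_{x=0}^{n-1}\bbE_n[j_{x,x+1}]\,ds$ omits the boundary currents $j_{0,1}+j_{n-1,n}$ (the bulk relation summed over $x=1,\dots,n-2$ only accounts for $\sum_{x=1}^{n-2}$); you clearly handle these terms later, but the display is incorrect as written. Second, your treatment of $n\bar\tau_+\int_0^t\bar r_n^{(n)}\,ds$ via the ODE identity and the pointwise bound $|\bar p_n|\lesssim\sqrt n$ from Proposition~\ref{cor013112-19z} is a slight departure: the paper instead absorbs the $\bar\tau_+ r_n$ contribution into the self-consistent bound on $\int_0^t\bbE_n[r_n^2]\,ds$ in Lemma~\ref{lm011401-20}, never invoking Proposition~\ref{cor013112-19z}. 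Both routes are valid and there is no circularity, since Proposition~\ref{cor013112-19z} depends only on Proposition~\ref{lm010911-19a}.
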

\begin{proof}
From \eqref{eq:10} we get
\begin{align}
  \label{eq:10a}
\tilde{\mathbf{H}}_n(t)=\tilde{\mathbf{H}}_n(0)+ \mathrm{I}_n+\mathrm{II}_n+\mathrm{III}_n,
\end{align}
where
\begin{align*}
\mathrm{I}_n & := -\left(T_+^{-1} - T_-^{-1}\right) n\sum_{x=0}^{n-1}\int_0^t
  \bbE_n\Big[ j_{x,x+1}(s)\Big]\dd s ,\\
\mathrm{II}_n & :=n^2 T_+^{-1} \bar\tau_+\int_0^t\bar p_n^{(n)}(s)\dd s,
\\
\mathrm{III}_n& :=  - n^2 \int_0^t\mathbf D(\tilde f_n(s))\dd s \leqslant 0, 
\end{align*}
where the last inequality follows from \eqref{eq:11Dtilde} and \eqref{eq:dirich-pos}. We now estimate $\mathrm{I}_n$, $\mathrm{II}_n$.

\subsubsection*{(i) Estimates of $\,\mathrm{I}_n$}

Recall the fluctuation-dissipation relation \eqref{eq:fd1-n}
and recall also the notation $g_x(t):=g_x({\bf r}(t),{\bf p}(t))$ (and similarly for other local functions). We  write 
\begin{align*}
|\mathrm{I}_n|\le \mathrm{I}_{n,1}+\mathrm{I}_{n,2}+\mathrm{I}_{n,3}, \\
\end{align*}
where
\begin{align}
\mathrm{I}_{n,1}&:= \frac{1}{n}\left|T_+^{-1} - T_-^{-1}\right| \left|\sum_{x=1}^{n-1}
  \bbE_n\Big[ g_x(t)-g_x(0)\Big]\right|,\label{In1}\\
\mathrm{I}_{n,2}&:= \left|T_+^{-1} - T_-^{-1}\right| n\left|\int_0^t
  \bbE_n\Big[ V_n(s)-V_1(s)\Big]\dd s\right|,\label{In2}\\
\mathrm{I}_{n,3}&:=\left|T_+^{-1} - T_-^{-1}\right| n\left|\int_0^t
  \bbE_n\Big[ j_{0,1}(s)\Big]\dd s\right|.\label{In3}
\end{align}
 To deal with $\mathrm{I}_{n,1}$ we invoke   the entropy inequality: 
for any $\al>0$ we have
\begin{equation}
\label{entropy-in-t}
\int_{\Om_n}  \Big(\sum_{x\in\T_n}{\cal E}_x\Big) \tilde f_n(t) \dd
 \tilde\nu
\le \frac{1}{\al}\left\{\log\bigg(\int
     \exp\left\{ \frac\al 2 \sum_{x\in\T_n}(p_x^2+r_x^2)
     \right\}\dd\tilde \nu \bigg)+\tilde{\mathbf{H}}_{n}(t)\right\}
  \end{equation}
for any $t\ge0$. Recalling the definition of $g_x$ given in \eqref{eq:defg} and choosing $\al>0$ sufficiently small
it allows us to estimate
\begin{equation*}
\mathrm{I}_{n,1}  \lesssim \frac{1}{n}\sum_{x\in \T_n}\bar{\cal E}_x(0)+
  \frac{1}{n}\sum_{x\in \T_n}\bar{\cal E}_x(t) \lesssim \frac{1}{n}\sum_{x\in \T_n}\bar{\cal
  E}_x(0)+1+\frac{1}{n}\tilde{\mathbf{H}}_n(t),\qquad n\ge 1,\,t\ge0.
\end{equation*}
To deal with $\mathrm{I}_{n,2}$, which involves boundary terms, we shall need some auxiliary 
estimates.
\begin{lemma}
\label{lm011401-20}
For any $t_*\ge0$ we have: for $ n\ge 1,\,t\in[0,t_*]$,
  \begin{equation}\label{eq:rn-21xx}
  \mathbb{E}_n\left[ \int_0^t\big(p_1^2(s)+ p_n^2(s)\big)\dd s\right]  \lesssim
1+\int_0^t \mathbf D(\tilde
f_n(s))\dd s
\end{equation} 
and
\begin{align}\label{eq:rn-21x}
  \mathbb{E}_n\left[ \int_0^t \big(r_1^2(s)  +r_n^2(s)\big) \dd s\right]    \lesssim & \;
 1+ \frac{1}{n^2}\tilde{\mathbf{H}}_n(t)+  
  \int_0^t \mathbf D(\tilde
f_n(s))\dd s \notag\\
&
+\bigg\{1+\int_0^t\tilde{\mathbf{H}}_n(s)\dd s\bigg\}^{1/2}\;\bigg\{\int_0^t \mathbf D(\tilde f_n(s))\dd s\bigg\}^{1/2}.
\end{align} 
\end{lemma}
\proof
Note that
$$
\E_n \big[p_n^2(s)\big]=\frac{1}{T_+}\int_{\Om_n}\tilde f_n(s)\dd\tilde
  \nu+\frac{1}{T_+}\int_{\Om_n}p_n\partial_{p_n}\tilde f_n(s)\dd\tilde
  \nu.
$$
Therefore, by the Cauchy-Schwarz inequality
\begin{align*}
\int_0^t \E_n\big[ p_n^2(s)\big]\dd s &\lesssim 1+\bigg|\int_0^t\int_{\Om_n}p_{n}\; (\tilde f_n(s))^{1/2}\frac{\partial_{p_n}\tilde
   f_n(s)}{(\tilde f_n(s))^{1/2}}\; \dd\tilde\nu \dd s  \bigg|\\
&
\le 1+\bigg\{\int_0^t\int_{\Om_n}p_{n}^2\tilde f_n(s)
  \dd\tilde\nu \dd s \bigg\}^{1/2}\;\bigg\{\int_0^t\int_{\Om_n}\frac{(\partial_{p_n}\tilde
   f_n(s))^2}{\tilde f_n(s)}\dd\tilde\nu \dd s \bigg\}^{1/2}\\
&
\le 1+\bigg\{\int_0^t \E_n \big[p_n^2(s)\big]\dd s\bigg\}^{1/2}\;\bigg\{\int_0^t \mathbf D(\tilde f_n(s))\dd s\bigg\}^{1/2}.
\end{align*}
A similar calculation can be made at the left boundary point. This yields
\eqref{eq:rn-21xx}.

To prove \eqref{eq:rn-21x} note that, by \eqref{eq:qdynamicsbulk}--\eqref{eq:qdynamicsbound}, 
\begin{align}
&     n^{-2} \bbE_n\big[p_n(t) r_n(t)-p_n(0) r_n(0)\big]\label{eq:b111x} \\
&= \int_0^t \bbE_n\big[p_n(s)(p_{n}(s) - p_{n-1}(s)) + (\bar\tau_+-r_n(s)) r_n(s) - (\tilde\gamma +2\gamma)
    p_n(s) r_n(s) \big]\dd s .\notag
  \end{align}
Thanks to the assumption on the initial energy bound \eqref{E0} we have
\begin{align}
  \E_n \left[\int_0^t r_n^2(s) \dd s \right]\lesssim  & \;  \int_0^t \E_n\big[ p_n^2(s)\big]\dd s
  +\left|\int_0^t \E_n \big[p_{n}(s) p_{n-1}(s)\big]\dd s\right| \notag \\ 
  & + \left\{\int_0^t \E\big[r_n^2(s)\big]\dd s\right\}^{1/2} \notag + (\tilde\gamma +2\gamma)
   \left|\int_0^t \bbE_n\big[ p_n(s) r_n(s)\big]\dd s\right| \notag \\
   & +n^{-2}\E_n\big[ p_n^2(t)+ r_n^2(t)\big]+\frac{1}{n}, \qquad n\ge1, t\ge0.\label{eq:b111ax}
  \end{align}
Similarly we get
\begin{equation}
\label{041401-20}
\left|\int_0^t \E_n\big[ p_n(s)r_n(s)\big]\dd s\right|\lesssim\bigg\{\int_0^t \E_n \big[r_n^2(s)\big]\dd s\bigg\}^{1/2}\;\bigg\{\int_0^t \mathbf D(\tilde f_n(s))\dd s\bigg\}^{1/2}
\end{equation}
and
$$
\left|\int_0^t \E_n\big[ p_n(s)p_{n-1}(s)\big]\dd s\right|\lesssim\bigg\{\int_0^t \E_n \big[p_{n-1}^2(s)\big]\dd s\bigg\}^{1/2}\;\bigg\{\int_0^t \mathbf D(\tilde f_n(s))\dd s\bigg\}^{1/2}
$$
for $n\ge1$, $t\ge0$.
Using the entropy inequality the last bound leads to 
\begin{equation}
\label{051401-20}
\left|\int_0^t \E_n\big[ p_n(s)p_{n-1}(s)\big]\dd
  s\right|\lesssim\bigg\{1+\int_0^t\tilde{\mathbf{H}}_n(s)\dd
s\bigg\}^{1/2}\;\bigg\{\int_0^t \mathbf D(\tilde f_n(s))\dd
s\bigg\}^{1/2},
\end{equation}
for $n\ge1, \,t\ge0.$
By the entropy inequality we  also get
\begin{equation}
\label{061401-20} 
n^{-2}\E_n\Big[ p_n^2(t)+ r_n^2(t)\Big]\lesssim \frac{1}{n^2}\Big(1+\tilde{\mathbf{H}}_n(t)\Big).
\end{equation}
Substituting these bounds into \eqref{eq:b111ax} we conclude
\eqref{eq:rn-21x}. The proofs for the case of the left boundary point are analogous.
\qed

\bigskip

Returning to the proof of Proposition \ref{thm-entropy-production},
with the help of Lemma \ref{lm011401-20}, we get 
\begin{align*}
\mathrm{I}_{n,2}\lesssim   n 
& +n\int_0^t \mathbf D(\tilde
f_n(s))\dd s+ \frac{1}{n}\tilde{\mathbf{H}}_n(t)\notag\\
&
+n\bigg\{1+\int_0^t\tilde{\mathbf{H}}_n(s)\dd s\bigg\}^{1/2}\; \bigg\{\int_0^t \mathbf D(\tilde f_n(s))\dd s\bigg\}^{1/2},\qquad n\ge 1,\,t\ge0.
\end{align*}
By an application of the Young inequality we conclude  that for any
$\al>0$ we choose $C>0$ so that
\begin{equation}
\label{In2a}
\mathrm{I}_{n,2}\lesssim  C n
+\al n^2\int_0^t \mathbf D(\tilde
f_n(s))\dd s+ \frac{C}{n}\tilde{\mathbf{H}}_n(t)
+C\int_0^t\tilde{\mathbf{H}}_n(s)\dd s,\qquad n\ge 1,\,t\ge0.
\end{equation}
An analogous bound holds for $\mathrm{I}_{n,3}$. Therefore we conclude that for
any $\al>0$,
\begin{align}
\label{In}
\mathrm{I}_{n}\lesssim n +\al n^2\int_0^t \mathbf D(\tilde
f_n(s))\dd s +\frac{1}{n}\tilde{\mathbf{H}}_n(t) +\int_0^t\tilde{\mathbf{H}}_n(s)\dd s,\qquad n\ge 1,\,t\ge0.
\end{align}

\subsubsection*{(ii) Estimates of $\mathrm{II}_n$}

To estimate $\mathrm{II}_n$  we need Proposition \ref{lm010911-19a}. Thanks to \eqref{052412-19a} we conclude that 
\begin{equation}
\label{061111-19}
|\mathrm{II}_n| =n^2 T_+^{-1} |\bar\tau_+|\left|\int_0^t\bar p_n ^{(n)} (s)\dd
  s\right|\lesssim n,\quad n\ge1.
\end{equation}
Choosing $\al$ sufficiently small in \eqref{In} and substituting
from \eqref{In} and \eqref{061111-19}  into \eqref{eq:10}
we conclude that there exists $c>0$, for which
\begin{align}
  \label{eq:10aa}
 & \tilde{\mathbf{H}}_n(t)\lesssim \tilde{\mathbf{H}}_n(0) +n+ \int_0^t
\tilde{\mathbf{H}}_n(s)\dd s
  - cn^2\int_0^t \mathbf D(\tilde f_n(s))\dd s. 
\end{align}
This, by an application of the Gronwall inequality, in particular implies that 
\begin{align}
  \label{eq:10b}
  \tilde{\mathbf{H}}_n(t)\lesssim \tilde{\mathbf{H}}_n(0) +n.
\end{align}
Thus \eqref{eq:88tt} follows. Estimate \eqref{eq:88t} is an easy
consequence of \eqref{eq:10aa} and  \eqref{eq:88tt}. 
\end{proof}

\subsubsection{Proof of Proposition {\ref{cor021211-19}}} \label{sec:endproof} From the assumption \eqref{eq:ass0}, one has
\[  \tilde{\mathbf{H}}_n(0) \lesssim n, \qquad n \geqslant 1.\]
Therefore, Proposition \ref{cor021211-19} directly follows from \eqref{eq:9-0} and \eqref{eq:88tt}.

\subsection{The end  of the proof of Proposition \ref{prop022111-19}} \label{sec:endproof-1} 

Proposition \ref{prop022111-19} directly follows from the entropy
inequality \eqref{entropy-in-t} and bound \eqref{eq:9-0}.
 
\subsection{Boundary estimates: proof of Lemma \ref{lem:bound2}}
\label{sec:conseque}
The entropy production bound from Proposition \ref{thm-entropy-production} is also crucial  in order to get information on the behavior of boundary quantities. We prove here all the estimates of Lemma \ref{lem:bound2}. 

\begin{proof}[Proof of Lemma \ref{lem:bound2}, estimate \eqref{ex-p0p1}]
We start with the right boundary point $x=n$. The proof  for $x=1$ is similar. Using the definition \eqref{tilde-nu} we  write
\begin{align}
\label{023112-19a}
  \bigg|\int_0^t& \E_n  \big[p_n(s)p_{n-1}(s)
   \big]\dd s\bigg| =\bigg|\int_0^t \dd s\int_{\Om_n}p_np_{n-1}\tilde f_n(s) \dd\tilde\nu
   \bigg| \notag \\ 
&
=T_+^{-1}\bigg|\int_0^t\dd s\int_{\Om_n}p_{n-1}(\tilde f_n(s))^{1/2}\frac{\partial_{p_n}\tilde
   f_n(s)}{(\tilde f_n(s))^{1/2}}\dd\tilde\nu \bigg|\notag\\
&
\le T_+^{-1}\bigg\{\int_0^t\dd s\int_{\Om_n}p_{n-1}^2\tilde f_n(s)
  \dd\tilde\nu\bigg\}^{1/2}\;\bigg\{\int_0^t\dd s\int_{\Om_n}\frac{(\partial_{p_n}\tilde
   f_n(s))^2}{\tilde f_n(s)}\dd\tilde\nu\bigg\}^{1/2}.
  \end{align}
Invoking the entropy production bound \eqref{eq:88t} we conclude that 
\begin{align}
\label{023112-19b}
\bigg|\int_0^t\E_n\big[p_n(s)p_{n-1}(s)
   \big]\dd s\bigg|
\lesssim\frac{1}{n}\bigg\{\E_n\left[\int_0^tp_{n-1}^2(s) \dd
  s\right]\bigg\}^{1/2}\lesssim \frac{1}{\sqrt{n}},\quad n\ge1,
  \end{align}
in light of the energy estimate \eqref{022111-19} of Proposition \ref{prop022111-19}, which is now proved.
\end{proof}

\begin{proof}[Proof of Lemma \ref{lem:bound2}, estimate  \eqref{ex-2-r}]
To show \eqref{ex-2-r} note that
  \begin{align}
\label{013112-19}
    \left|\mathbb{E}_n\left[ \int_0^t r_n(s) p_n(s) \dd s \right]\right|
      & = \left|\int_0^t \dd s \int r_n \partial_{p_n}\tilde  f_n(s)
        \dd\tilde\nu\right| \\
 & \le  \bigg\{\int_0^t \dd s \int r_n^2\tilde  f_n(s) \dd\tilde\nu\bigg\}^{1/2}  \bigg\{ \int_0^t  \dd s \int \frac{\left( \partial_{p_n} \tilde f_n(s) \right)^2}{\tilde f_n(s)} \dd\tilde\nu \bigg\}^{1/2}\notag\\
     & \lesssim  
     \frac{1}{ \sqrt{n}} \left\{\mathbb{E}_n\left[ \int_0^t r_n^2(s)
       \dd s\right] \right\}^{1/2} \lesssim \frac{1}{\sqrt{n}}, \notag
  \end{align}
in light of Proposition \ref{prop022111-19}--\eqref{022111-19}.
  The proof for the left boundary is similar.
\end{proof}

\begin{proof}[Proof of Lemma \ref{lem:bound2}, estimate \eqref{ex-2-l1}]
From the time evolution of the dynamics \eqref{eq:qdynamicsbound} (see also \eqref{eq:pbd-bar}) we obtain for $x=n$
 \begin{equation}
\frac{1}{n^2} \Big(\bar    p_n^{(n)}(t)- \bar    p_n^{(n)}(0)\Big)=  \int_0^t\big(\bar\tau_+  -  \bar r_n^{(n)}(s)\big)\dd s -(2\gamma +  \tilde \gamma) \int_0^t\bar    p_n^{(n)}(s) \dd s \label{eq:pbd-bar2a}.
   \end{equation}
Using the energy bound \eqref{022111-19} we conclude that the right hand side is of
order of magnitude $n^{-3/2}$
as $n\to+\infty$. Thanks to Proposition \ref{lm010911-19a} we conclude
\eqref{ex-2-l1}. The proof for $x=1$ is analogous.
\end{proof}

\begin{proof}[Proof of Lemma \ref{lem:bound2}, estimate \eqref{eq:ex-1}]
We show the proof for $x=n$ only, as the argument for $x=0$ is analogous.   Note that, 
  \begin{equation*}
     \mathbb{E}_n\bigg[   \int_0^t  \left(p_{n}^2(s) - T_+\right)\dd s\bigg] 
=
      T_+  \int_0^t  \dd s \int_{\Om_n} p_{n} \partial_{p_n}\tilde  f_n(s) \dd\tilde \nu.
  \end{equation*}
Thus, by the Cauchy-Schwarz inequality
\begin{multline}
  \left|\mathbb{E}_n\bigg[   \int_0^t  \left(p_{n}^2(s) - T_+\right)\dd s\bigg]\right|
 \\  \le T_+ \left\{ \int_0^t  \dd s \int_{\Om_n} p_{n}^2 \tilde f_n(s) \dd\tilde \nu \right\}^{1/2}
      \bigg\{ \int_0^t  \dd s \int_{\Om_n}  \frac{\left( \partial_{p_n}\tilde  f_n(s) \right)^2}{\tilde f_n(s)} \dd\tilde \nu \bigg\}^{1/2}.
    \label{eq:7x}
\end{multline}
From \eqref{eq:b111x}, Lemma \ref{lem:bound2}--\eqref{ex-p0p1} and \eqref{ex-2-r}, which have been proved above, we 
 get
\begin{align}   
 \left|\mathbb{E}_n\left[ \int_0^t \Big(r_n^2(s)-\bar\tau_+^2-p_n^2(s)\Big)\dd s\right] \right|\lesssim
    \frac{1}{\sqrt{n}}, & \quad n\ge 1. \label{ex-2-r11}
\end{align}
Then, by Lemma \ref{lem:bound2}--\eqref{eq:13} and \eqref{ex-2-r11}, for any $t>0$ 
  \begin{equation}\label{eq:p2b}
  \int_0^t \dd s\ \int_{\Om_n}  p_{n}^2 \tilde f_n(s) \dd\tilde \nu \lesssim 1,\quad n\ge1.
  \end{equation}
 Using this estimate together with  Proposition \ref{prop022111-19}--\eqref{022111-19}
  we
 conclude \eqref{eq:ex-1}.
\end{proof}

\begin{proof}[Proof of Lemma \ref{lem:bound2}, estimate \eqref{eq:13}]
From \eqref{eq:b111x} and Lemma \ref{lem:bound2}--\eqref{ex-p0p1} we get
\begin{align}
\label{eq:b111d}
  \int_0^t \E_n \big[ r_n^2(s)\big] \dd s  \le  & \; \int_0^t \E_n
 \big[ p_n^2(s)\big]\dd s+|\bar\tau_+|\; \Big|\int_0^t \E_n  \big[r_n(s)\big] \dd s \Big| \notag\\
& + (\tilde\gamma +2\gamma)
 \bigg| \int_0^t \E_n  \big[ p_n(s) r_n(s)\big] \dd s\bigg| +o_n(1),
  \end{align}
where $o_n(1)\rightarrow0$, as $n\to+\infty$. Using the Young inequality we 
   conclude that 
  \begin{equation}\label{eq:rn-21}
  \mathbb{E}_n\left[ \int_0^t r_n^2(s)\dd s\right]  \lesssim
  \mathbb{E}_n\left[ \int_0^t p_n^2(s)\dd s\right] +1,\qquad
  n\ge 1.
\end{equation} 
We use Lemma \ref{lem:bound2}--\eqref{eq:ex-1} to conclude that
$$
\mathbb{E}_n\left[ \int_0^t r_n^2(s) \;
      \dd s\right] \lesssim 1,\qquad n\ge1.
$$
An analogous  estimate on $\mathbb{E}_n\left[ \int_0^t r_1^2(s) \;
      \dd s\right] $ follows from the same argument, using the relation
\begin{align}
     n^{-2}L(p_0 r_1) &= (p_1-p_0)p_0+  r_1^2 - (\tilde\gamma +2\gamma)
    p_0r_1\label{eq:a}
  \end{align}
and the entropy production bound at $x=0$.
\end{proof}

\begin{proof}[Proof of Lemma \ref{lem:bound2}, estimate \eqref{eq:11}]
For the right boundary current $j_{n-1,n}$, the equality follows from the definition
\eqref{eq:en-evol+}, thanks to: Proposition \ref{lm010911-19a}, Lemma \ref{lem:bound2}--\eqref{eq:ex-1}, and the energy estimate \eqref{022111-19}.  An analogous argument, using \eqref{eq:en-evol-} instead, works for
left boundary current.  
\end{proof}

\begin{proof}[Proof of Lemma \ref{lem:bound2}, estimate \eqref{ex-2-l11t}--\eqref{ex-2-r11t}] 
From \eqref{eq:b111x}, combined with the above \eqref{ex-2-r11}, and Lemma \ref{lem:bound2}--\eqref{eq:ex-1}, the result follows.
\end{proof}

\begin{proof}[Proof of Lemma \ref{lem:bound2}, estimate \eqref{eq:ex-1bis}]

 We prove that for each $t_*>0$ there exists $C>0$ such that
   \begin{align}
   \label{eq:7b}
\sup_{t\in[0,t_*]}\sup_{x\in \T_n}\bigg|\int_0^t\bbE_n j_{x-1,x}(n^2s)\dd s\bigg|\le \frac{C}{n},\quad n=1,2,\ldots.
 \end{align}
To show it we use the fluctuation-dissipation relation
\eqref{eq:fd1-n}, \eqref{eq:defg}. We complement it with the relations
at $x=0$ and $n$. Then
\begin{equation*}
  \begin{split}
   & n^{-2} L g_0 - (V_{1}-V_0)= j_{0,1},\quad\mbox{where}\\
   &V_0:=\frac 1{4 \gamma}
   p_0^2-
   \frac{\tilde\ga}{2}(p_0^2 -T_0)+\frac{\tilde\ga}{4\ga}p_0r_1,\quad g_0:=-\frac14p_0^2 + \frac{1}{4\gamma}p_0r_{1} .
\end{split}
\end{equation*}
Likewise
\begin{equation*}
  \begin{split}
   & n^{-2} L g_n - (V_{n+1}-V_n)= j_{n,n+1},\quad\mbox{where}\\
   &V_{n+1}:=\frac 1{4 \gamma}
   p_n^2+\frac12\tau_+ p_n+\frac1{4\ga}\tau_+ r_n
   -\frac{\tilde\ga}{2}(p_n^2 -T_n)-\frac{\tilde\ga}{4\ga} p_nr_n,\\
   &
   j_{n,n+1}=\tilde\ga(p_n^2-T_n)-\tau_+p_n,\quad g_n:=-\frac14p_n^2 + \frac{1}{4\gamma}p_nr_{n} .
\end{split}
\end{equation*}
Having the above relations we can write
\begin{align}
  \label{eq:7b}
\int_0^t\sum_{x=0}^{n}j_{x,x+1}(n^2 s)\dd
                  s&=\frac{1}{n^2}\int_0^t\sum_{x=0}^{n+1}Lg_{x}(n^2
                  s)\dd s+\int_0^t\sum_{x=0}^{n}\big(V_{x}(n^2
                  s)-V_{x+1}(n^2 s)\big)\dd s\notag\\
  &
    =\frac{1}{n^2}\sum_{x=0}^{n+1}\big[g_{x}(n^2
                  t)-g_{x}(0
                  )\big]+\int_0^t \big[V_{0}(n^2
                  s)-V_{n+1}(n^2 s)\big] \dd s.
\end{align}
From the already established Proposition \ref{prop022111-19} and
estimates \eqref{ex-p0p1}--\eqref{ex-2-r11t} we conclude that
for any $t_*>0$ there exists $C>0$ such that
\begin{align}
\label{031211-19}
\sup_{t\in[0,t_*]}\left|\int_0^t\sum_{x=0}^{n}\bbE_nj_{x,x+1}(n^2 s)\dd
                  s\right|\le C,\quad n=1,2,\ldots.
\end{align}
Hence, denoting $j_{-1,0}:=\tilde\ga(T_0-p_0^2)$, we can write, using \eqref{eq:en-evol}--\eqref{eq:en-evol+},
\begin{align*}
 & \left|(n+1)\int_0^t\bbE_n j_{-1,0}(n^2 s)\dd s\right|\le
  \left|\int_0^t\sum_{x=0}^{n}\bbE_nj_{x,x+1}(n^2 s)\dd
                  s\right|+ \left|\int_0^t\sum_{x=0}^{n}\bbE_n\big[j_{x,x+1}(n^2 s)-j_{-1,0}(n^2 s)\big]\dd
   s\right|
  \\
  &
    \le C+\left|\int_0^t\sum_{x=0}^{n}\sum_{y=0}^{x}\bbE_n\big[j_{y,y+1}(n^2 s)-j_{y-1,y}(n^2 s)\big]\dd
    s\right|\\
  &
    \le
    C+\frac{1}{n^2}\left|\int_0^t\sum_{x=0}^{n}\sum_{y=0}^{x}\bbE_nL{\cal E}_y(n^2 s)\dd
    s\right|=C+\frac{1}{n^2}\left|\sum_{x=0}^{n}\sum_{y=0}^{x}\big[\bbE_n{\cal E}_y(n^2 t) -\bbE_n{\cal E}_y(0)\big]\dd
    s\right|
    \le C',
\end{align*}
by virtue of \eqref{022111-19},
for some constant $C'>0$ independent of $n$. This proves that
 \begin{equation}
\label{031211-19a}
\left|\int_0^t\left(T_0-
  \E_n\big[p_0^2(s)\big] \right)\dd s \right|\lesssim\frac1{n+1}.
\end{equation}
In a similar fashion we show that
\begin{equation*}
 \left|\int_0^t\bbE_n j_{n,n+1}(n^2s)\dd s\right|\lesssim\frac1{n+1}.
\end{equation*}
This combined with Lemma \ref{lm010911-19} allows us to conclude that
\begin{equation}
\label{031211-19b}
\left|\int_0^t\left(T_n -
  \E_n\big[p_n^2(s)\big] \right)\dd s\right|\lesssim\frac1{n+1},
\end{equation}
which ends the proof of \eqref{eq:ex-1bis}.
  \end{proof}

\section{Energy balance identity and equipartition}
\label{sec:wigner}

The main result which is left to be proved is Proposition \ref{equipartition}, which describes an equipartition phenomenon between the mechanical and thermal energies. 
To prove that result, we will use the \emph{Fourier-Wigner
distributions} which permit to control the energy profiles over various frequency modes, and have been successfully used in previous works. The major difficulty here is the presence of boundary terms, which all need to be controlled. 
In Section \ref{sec:wave-function-1} we introduce definitions and write down the evolution equation satisfied by wave functions. In Section \ref{sec:balance} we obtain an \emph{energy balance identity} (Proposition \ref{thm020411-19}). The proof of Proposition \ref{equipartition} is achieved in Section \ref{sec11}.

\subsection{The wave and Wigner functions}
\label{sec:wave-function-1}

In the present section we restore the superscript $n$ when referring
to the mean and fluctuation of the stretch and momentum.
We define the \emph{fluctuating wave function} as
\begin{equation}
  \label{eq:wfx}
  \tilde \psi_x^{(n)}(t) =\tilde  r_x^{(n)}(t)  + i \tilde  p_x^{(n)}(t) , \qquad  x \in\T_n, t\geqslant 0,
\end{equation}
and its  Fourier transform,
\begin{equation}
  \label{eq:wf}
  \widehat{ \tilde\psi}^{(n)} (t,k) = {\widehat{ \tilde  r}}^{(n)} (t,k) + i {\widehat{ \tilde  p}}^{(n)} (t,k) , \qquad k\in \hat \T_n, t\ge 0.
\end{equation}
The wave function extends to a periodic function on $\frac{1}{n+1}\Z $, by
letting 
$
\widehat{\tilde\psi}^{(n)}(k +1) = \widehat{\tilde \psi}^{(n)}(k)$  for any 
$k\in \widehat \T_n.
$ 
In particular $\widehat{\tilde\psi}^{(n)}\left(k+\frac{\eta}{n+1}\right)$ is well defined for any $\eta\in \Z$.
Then for $k\in \widehat \T_n, \eta\in\Z, t \ge 0$ we define the Fourier-Wigner functions: 
\begin{align*}
\tilde   W^+_n(t,\eta,k)& := \frac 1{2(n+1)} \mathbb E_n\left[  \widehat{\tilde\psi}^{(n)}\left(t,k+\tfrac{\eta}{n+1}\right) \big[\widehat{\tilde\psi}^{(n)}\big]^{\star}(t,k)\right],\\
 \tilde    W^-_n(t, \eta,k)& := \frac 1{2(n+1)} \mathbb E_n\left[  \big[\widehat{\tilde\psi}^{(n)}\big]^\star\left(t,-k-\tfrac{\eta}{n+1}\right) \widehat{\tilde\psi}^{(n)}(-k)\right]=(\tilde W^+_n)^\star(t,-\eta,-k),
\\
 \tilde    Y^+_n(t, \eta,k) &:= \frac 1{2(n+1)} \mathbb E_n\left[
    \widehat{\tilde\psi}^{(n)}\left(t,k+\tfrac{\eta}{n+1}\right)
    \widehat{\tilde\psi}^{(n)}(t,-k)\right],\\
\tilde    Y^-_n(t, \eta,k) &:= \frac 1{2(n+1)} \mathbb E_n\left[
    \big[\widehat{\tilde\psi}^{(n)}\big]^\star\left(t,-k-\tfrac{\eta}{n+1}\right)
   \big[\widehat{\tilde\psi}^{(n)}\big]^\star(k)\right]=(\tilde Y^+_n)^\star(t,-\eta,-k).
\end{align*}
As a direct corollary from  Proposition \ref{prop022111-19} we
conclude the following bound: for any $t>0$ we have
\begin{align}
\label{042111-19}
\sum_{\iota=\pm}\bigg(\sup_{s\in[0,t]}\sup_{\eta\in\Z}\underset{{k\in \widehat{\T}_n}}{{\hat\sum}} | \tilde  W^\iota_n(s, \eta,k)|+\sup_{s\in[0,t]}\sup_{\eta\in\Z}\underset{{k\in \widehat{\T}_n}}{{\hat\sum}}| \tilde  Y^\iota_n(s,\eta,k)| \bigg)<+\infty.
\end{align}
%
%
%
%
%
%
%
%
%
%
%
%
%
%
Note that, similarly to \eqref{eq:wfx}--\eqref{eq:wf}, we can also define the (full) wave function $\psi_x^{(n)}(t)=r_x^{(n)}(t)+ip_x^{(n)}(t)$ and its Fourier transform $\hat\psi^{(n)}(t,k)$. Using \eqref{eq:qdynamicsbulk}--\eqref{eq:qdynamicsbound} we conclude that
the fluctuating  wave function satisfies
\begin{align}
\label{010502-20psi}
 &
 \dd\hat{\tilde\psi}^{(n)}(t,k)= -{n^2} \left(2i \sin^2(\pi k) \hat{\tilde\psi}^{(n)}(t,k) + \sin(2\pi k)
  [\hat{\tilde\psi}^{(n)}]^-(t,k) \right) \dd t \\
&
- \ga n^2\left\{\sum_{\iota=\pm} \iota[\hat{\tilde\psi}^{(n)}]^\iota (t,k) \right\}
   \dd t
-  \underset{{k'\in \widehat{\T}_n}}{{\hat\sum}} \left\{\sum_{\iota=\pm} \iota[\hat\psi^{(n)}]^\iota(t^-,k-k') 
   \right\} \dd\widehat{\tilde{\mathcal
    N}}(t,k')+\dd\hat{\tilde{\cal R}}_n(t,k),\notag
\end{align}
where  $
\widehat{\tilde{\mathcal
    N}}(t,k):=\widehat{\mathcal{N}}(  t,  k)-\ga n^2 t(n+1)\delta_{k,0}
$ is a martingale, with
$$
\widehat{{\mathcal
    N}}(t,k):=\sum_{x\in \T_n} \mathcal N_x(\gamma n^2 t) e^{-2i\pi x k},
$$
where $[\hat\psi^{(n)}]^\pm(t,k)$ are defined by \eqref{iota} 
and finally
  \[
\dd\hat{\tilde{\cal R}}_n(t,k):=n^2(\tilde p_n(t)-\tilde p_0(t))
 +  i\sum_{x=0,n}e^{-2\pi i x k} \big( -\tilde\gamma n^2 \tilde p_{x}(t) \dd t+ n\sqrt{2\tilde\gamma T_x}
  \dd w_{x}(t)\big).
 \]
Here $\delta_{x,y}$ is the usual Kronecker delta function, which equals 1 if $x=y$ and 0 otherwise. 
The process $\widehat{{\mathcal
    N}}(t,k)$ is a semi-martingale whose mean and covariation can be
computed from the relations
$
\langle \dd\widehat{\mathcal{N}}(  t,  k)\rangle=\gamma n^2(n+1)  \delta_{k,0} \dd t,$ and $
\langle \dd\widehat{\mathcal{N}}(  t,  k),\dd\widehat{\mathcal{N}}(t,k')\rangle=\gamma n^2(n+1) t \delta_{k,-k'} \dd t.
$

\subsection{Energy balance for the fluctuating Wigner functions} \label{sec:balance}

After straightforward computations one gets, for $\iota=\pm$:
\begin{align}
\partial_t \tilde W^\iota_n = & \; \vphantom{\int} \iota\left(-in  (\delta_ns)\;  \tilde W_n^\iota -n^2\sin(2\pi k)\; \tilde Y_n^\iota-n^2\sin\big(2\pi\big(k+\tfrac{\eta}{n+1}\big)\big)\; \tilde Y_n^{-\iota}\right)\label{eq:closed-syst1f}  \\
&
 + \gamma n^2\; \bb
 L\big(\tilde W_n^++\tilde W_n^--{\tilde Y^+_n-\tilde Y_n^-}\big)+\iota\gamma
n^2\big(\tilde W_n^--\tilde W_n^+\big)\vphantom{\int}\notag\\
&
+\gamma n^2 \bar U_n(t,\eta)
 +\frac{\tilde \ga n^2}{n+1}\sum_{x=0,n} e^{-2\pi i x\frac\eta{n+1}} T_x \notag\\
&
+\frac{n^2}{2(n+1)}\;\bb E_n\left[
    \hat{\tilde Z}_n^{-\iota}(t, k)[\hat{\tilde \psi}^{(n)}]^{\iota}\left(t,k+\tfrac{\eta}{n+1}\right)+
 \hat{\tilde Z}_n^{\iota}\left(t,k+\tfrac{\eta}{n+1}\right)[\hat{\tilde \psi}^{(n)}]^{-\iota}(t,k)\right], \notag
\\ ~\notag \\ 
\partial_t \tilde Y^\iota_n  = & \; \iota\left(n^2\sin(2\pi k)\; \tilde W_n^\iota-i n^2 (\sigma_n s) \; \tilde Y^\iota_n -n^2\sin\big(2\pi\big(k+\tfrac{\eta}{n+1}\big)\big)\; \tilde W_n^{-\iota} \vphantom{\int} \right)\label{eq:closed-syst2f} \\
&
+ \gamma n^2\;\bb L\big(\tilde Y^+_n+\tilde Y^-_n - {\tilde W_n^+-\tilde W^-_n}\big)+\iota \gamma
n^2\big(\tilde Y_n^--\tilde Y_n^+\big)\vphantom{\int} \notag \\
&
-\gamma n^2\; \bar U_n(t,\eta)
-\frac{\tilde \ga n^2}{n+1}\sum_{x=0,n} e^{-2\pi i x\frac\eta{n+1}} T_x\vphantom{\int}\notag\\
&\vphantom{\int}
+\frac{n^2}{2(n+1)}\; \bb E_n\left[
    \hat{\tilde Z}_n^\iota(t,-k)\hat{\tilde\psi}^\iota\left(t,k+\tfrac{\eta}{n+1}\right)+
\hat{\tilde Z}_n^\iota\left(t,k+\tfrac{\eta}{n+1}\right)\hat{\tilde\psi}^{\iota}(t,-k)\right], \notag
\end{align} 
where $\bb L$  is defined by $
(\bb L f)(k):=\underset{{k'\in \widehat{\T}_n}}{{\hat\sum}}f(k') -
f(k)$ for any $f:\widehat{\T}_n \to \mathbb{C}$ 
and we let
\begin{equation}
\label{012308a} \begin{split}
(\delta_n s)(\eta,k)&:=2n\Big(\sin^2\left(\pi\big(k+\tfrac{\eta}{n+1}\big)\right)-\sin^2(\pi k)\Big), \vphantom{\Bigg\{}
\\
(\sigma_ns)(\eta,k)&:=2\Big(\sin^2\left(\pi\big(k+\tfrac{\eta}{n+1}\big)\right)+\sin^2(\pi
                     k)\Big),\\
\hat{\tilde Z}_n(t,k)&:=  \tilde p_n(t)-\tilde p_0(t)
   -\tilde\gamma i\sum_{x=0,n}e^{-2\pi i x k}\tilde p_{x}(t),\vphantom{\Bigg\{}\\
\bar U_n(t,\eta)&:=\frac 2{n+1}\underset{{k\in \widehat{\T}_n}}{{\hat\sum}}\mathbb E_n\left[  \widehat{\bar  p}^{(n)}\left(t,k+\tfrac{\eta}{n+1}\right)  [\widehat{\bar  p}^{(n)}]^\star\left(t,k\right)\right] .
 \end{split}\end{equation}
We are interested in the time evolution of the following quantity: we denote 
\begin{equation}
\label{eq:energy1f}
\tilde{\mf E}_n(t):=\sum_{\eta\in\T_n}\underset{{k\in \widehat{\T}_n}}{{\hat\sum}}
\big(|\tilde W_n^+|^2+|\tilde W_n^-|^2 + |\tilde Y_n^+|^2+|\tilde Y_n^+|^2 \big)(t,\eta,k).\end{equation}
One can check that
\begin{align*} \tilde{\mf E}_n(t) 
&=\frac 1{2(n+1)} 
\sum_{x,x'}\bigg\{\left(\mathbb E_n\left[ \tilde p_x^{(n)}(t)
  \tilde p_{x'}^{(n)}(t)\right] \right)^2+\left(\mathbb E_n\left[ \tilde r_x^{(n)}(t)
  \tilde r_{x'}^{(n)}(t)\right] \right)^2 \notag \\ & \quad \qquad \qquad \qquad +2 \left (\mathbb E_n\left[ \tilde p_x^{(n)}(t)
  \tilde r_{x'}^{(n)}(t)\right] \right)^2\bigg\}.\notag
\end{align*}
After a tedious but direct calculation, using \eqref{eq:closed-syst1f}--\eqref{eq:closed-syst2f}, we get the following identity
\begin{align}
\label{energy-evol3ff}
&\frac12 \partial_t \tilde{\mf E}_n(t) 
=   \frac{2\tilde \ga n^2}{n+1} \sum_{x=0,n}\left(T_x-\E_n\left[\tilde p_x^{(n)}(t)\right]^2\right)\E_n\left[\tilde p_x^{(n)}(t)\right]^2 \notag\\ & +\frac{4\ga n^2}{n+1}\sum_{x\in\T_n}\left(\bar p_x^{(n)}(t)\right)^2\E_n\left[ \big(\tilde p_x^{(n)}(t)\big)^2\right]\nonumber
\\
&
-\frac{2n^2 \tilde\gamma}{n+1}\sum_{x=0,n}\sum_{x'\in\T_n}\left\{
  \left(\E_n\Big[\tilde p_{x}^{(n)}(t)\tilde  r_{x'}^{(n)}(t)\Big]\right)^2
+\left(\E_n\Big[\tilde p_{x}^{(n)}(t) \tilde p_{x'}^{(n)}(t)\Big]\right)^2 \right\}\notag\\
&
-\frac{4\ga
  n^2}{n+1}\left\{\sum_{x,x'\in\T_n,x\not=x'}\left(\E_n\left[\tilde p_x^{(n)}(t)\tilde p_{x'}^{(n)}(t)\right]\right)^2
 +
\sum_{x,x'\in\T_n}\left(\mathbb E_n\left[ \tilde  r_{x}^{(n)}(t) \tilde p_{x'}^{(n)}(t)\right]\right)^2\right\}.
\end{align}
The main result of this section is the following
\begin{proposition}[Energy balance identity]
\label{thm020411-19}
For any $t_*>0$ there exists $C>0$ such that 
\begin{multline}
\label{020511-19}
\tilde{\mf E}_n(t)+\ga n^2\sum_{\eta\in\T_n}\underset{{k\in \widehat{\T}_n}}{{\hat\sum}}\int_0^t\left(\left|\tilde W^+_n -\tilde W^-_n \right|^2+\left|\tilde Y^+_n -\tilde Y^-_n
  \right|^2\right)(s,\eta,k)\dd s \\
\le \tilde{\mf E}_n(0) +Ct\log^2(n+1),\qquad t\in[0,t_*],\,n\ge 1.
\end{multline}
\end{proposition}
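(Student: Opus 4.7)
The strategy is to integrate the pointwise identity \eqref{energy-evol3ff} from $0$ to $t$ and bound the four contributions on the right hand side, which I label (in order of appearance) as $\mathrm I$ (boundary equilibration, prefactor $\tilde\gamma$), $\mathrm{II}$ (mechanical/thermal coupling, prefactor $\gamma$), $-\mathrm{III}$ (boundary dissipation, non-positive, prefactor $\tilde\gamma$) and $-\mathrm{IV}$ (bulk dissipation, non-positive, prefactor $\gamma$). Integrating gives $\tilde{\mf E}_n(t) + 2\int_0^t\mathrm{IV}\,\dd s = \tilde{\mf E}_n(0) + 2\int_0^t(\mathrm I + \mathrm{II} - \mathrm{III})\,\dd s$; since $\mathrm{III} \ge 0$ can be dropped, the proof reduces to: (i) checking $2\int_0^t\mathrm{IV}\,\dd s \ge \gamma n^2\int_0^t\sum_{\eta,k}(|\tilde W_n^+ - \tilde W_n^-|^2 + |\tilde Y_n^+ - \tilde Y_n^-|^2)\,\dd s$, and (ii) showing $\int_0^t(\mathrm I + \mathrm{II})\,\dd s \lesssim \log^2(n+1)\, t$.

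For (i), the plan is a Parseval-type algebraic computation. Expanding $\hat{\tilde\psi} = \hat{\tilde r} + i\hat{\tilde p}$ and using $\hat{\tilde r}^\star(k) = \hat{\tilde r}(-k)$, $\hat{\tilde p}^\star(k) = \hat{\tilde p}(-k)$, the $r$--$r$ and $p$--$p$ pieces cancel inside both $\tilde W_n^+ - \tilde W_n^-$ and $\tilde Y_n^+ - \tilde Y_n^-$, so only $r$--$p$ cross combinations survive. Squaring and summing in $(\eta,k)$ via Parseval in the two Fourier variables yields $\gamma n^2\sum_{\eta,k}(|\tilde W_n^+ - \tilde W_n^-|^2 + |\tilde Y_n^+ - \tilde Y_n^-|^2) = \frac{4\gamma n^2}{n+1}\sum_{x,x'}(\mathbb E_n[\tilde r_x \tilde p_{x'}])^2$, which is exactly the second piece of $\mathrm{IV}$. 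Since $\mathrm{IV}$ also contains the extra non-negative term $\frac{4\gamma n^2}{n+1}\sum_{x\ne x'}(\mathbb E_n[\tilde p_x\tilde p_{x'}])^2$, the inequality $2\,\mathrm{IV} \ge \gamma n^2(|\tilde W_n^+ - \tilde W_n^-|^2 + |\tilde Y_n^+ - \tilde Y_n^-|^2)$ follows.

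The coupling term is handled directly. By Proposition \ref{prop022111-19} one has $\sum_x \mathbb E_n[(\tilde p_x^{(n)}(s))^2] \le 2\sum_x \bar{\mathcal E}_x^{(n)}(s) \lesssim n$ uniformly on $[0,t_*]$. Pulling out $\sup_{x}(\bar p_x^{(n)}(s))^2$ and invoking Lemma \ref{lm010911-19} gives
\[
\int_0^t\mathrm{II}(s)\,\dd s \;\lesssim\; \frac{n^2}{n+1}\cdot n\cdot \frac{\log^2(n+1)}{n^2} \;\lesssim\; \log^2(n+1)\, t.
\]

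The delicate step, and the main obstacle, is controlling $\mathrm I = \frac{2\tilde\gamma n^2}{n+1}\sum_{x=0,n}(T_x - \mathbb E_n[(\tilde p_x^{(n)})^2])\mathbb E_n[(\tilde p_x^{(n)})^2]$: a naive absolute-value bound using only Lemma \ref{lem:bound2}\,\eqref{eq:ex-1} yields $O(\sqrt n)$, which is far too large. The trick is to decompose $\mathbb E_n[(\tilde p_x^{(n)})^2] = T_x - A_x - B_x$ with $A_x := T_x - \mathbb E_n[(p_x^{(n)})^2]$ and $B_x := (\bar p_x^{(n)})^2$, and to use the algebraic identity
\[
\big(T_x - \mathbb E_n[(\tilde p_x)^2]\big)\mathbb E_n[(\tilde p_x)^2] \;=\; T_x A_x + T_x B_x - (A_x + B_x)^2.
\]
Discarding the non-positive square, summing over $x \in \{0,n\}$ and integrating reduces $\int_0^t\mathrm I\,\dd s$ to the two terms $\sum_{x=0,n}T_x\int_0^t A_x\,\dd s$ and $\sum_{x=0,n}T_x\int_0^t B_x\,\dd s$. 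The former is precisely the signed combination estimated by Lemma \ref{lem:bound2}\,\eqref{eq:ex-1bis} — which crucially exploits cancellation between the two thermostats — and is $O(1/n)$; the latter is $O(\log(n+1)/n^2)$ by Proposition \ref{prop012812-19}\,\eqref{022812-19b}. After multiplication by $\frac{2\tilde\gamma n^2}{n+1}$ this gives $\int_0^t\mathrm I\,\dd s = O(1)$, hence $\lesssim \log^2(n+1)\, t$. Combining the three bounds yields \eqref{020511-19}.
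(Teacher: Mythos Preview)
Your proof is correct and follows the same line as the paper's own argument: integrate \eqref{energy-evol3ff}, drop the non-positive boundary dissipation, identify the bulk dissipation with the Wigner quantity via the Parseval identity $\sum_{\eta}\hat\sum_k(|\tilde W_n^+-\tilde W_n^-|^2+|\tilde Y_n^+-\tilde Y_n^-|^2)=\tfrac{4}{n+1}\sum_{x,x'}(\mathbb E_n[\tilde r_x\tilde p_{x'}])^2$, bound term $\mathrm{II}$ by Proposition~\ref{prop022111-19} combined with Lemma~\ref{lm010911-19}, and handle term $\mathrm I$ via the algebraic inequality $(T_x-a)a\le T_x(T_x-a)$ together with Lemma~\ref{lem:bound2}\,\eqref{eq:ex-1bis}. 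The paper writes the last step in one line as $(T_x-a)a\le T_x(T_x-a)$, while you expand it explicitly as $T_xA_x+T_xB_x-(A_x+B_x)^2$; these are the same manipulation.
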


\begin{proof}
Thanks to Lemma \ref{lem:bound2}--\eqref{eq:ex-1bis} we  write
\begin{equation*}
\sum_{x=0,n}\left(T_x-\E_n\left[\tilde p_x^{(n)}(t)\right]^2\right)\E_n\left[\tilde p_x^{(n)}(t)\right]^2 \le
\sum_{x=0,n}T_x\left(T_x-\E_n\left[\tilde p_x^{(n)}(t)\right]^2\right)\lesssim
\frac{1}{n},\quad n\ge1.
\end{equation*}
By Proposition \ref{prop022111-19} and  Lemma \ref{lm010911-19} for any
$t_*>0$ we also
have
\begin{multline}
\label{020602-20}
\sum_{x\in\T_n}\int_0^t\left(\bar p_x^{(n)}(s)\right)^2\E_n\left[\big(\tilde
  p_x^{(n)}(s)\big)^2\right]\dd s\\
\le
\left(\sup_{s\in[0,t_*]}\sum_{x\in\T_n}\E_n\left[\big(\tilde p_x^{(n)}(s)\big)^2\right]\right)
\left(\int_0^t \sup_{x\in\T_n} \left(\bar p_x^{(n)}(s)\right)^2\dd s\right)\lesssim\frac{\log^2(n+1)}{n}
\end{multline}
for $n\ge1$, $t\in[0,t_*]$.
Since 
\begin{equation*}
\sum_{\eta\in\T_n}\underset{{k\in \widehat{\T}_n}}{{\hat\sum}}\left(\left|\tilde W^+_n -\tilde W^-_n \right|^2+\left|\tilde Y^+_n -\tilde Y^-_n
  \right|^2\right) (t,\eta,k)=\frac {4}{n+1} 
  \sum_{x,x'\in \T_n}
\left\{\mathbb E_n\left[  \tilde r_{x}^{(n)}(t) \tilde p_{x'}^{(n)}(t)\right]\right\}^2
\end{equation*}
we conclude   \eqref{020511-19}.
\end{proof}

\subsection{Equipartition of energy: proof
of Proposition \ref{equipartition}}

\label{sec11}

In this section we prove Proposition \ref{equipartition}. We recall here its statement: under the assumptions of Theorem \ref{main-result} (namely Assumptions  \ref{ass1},    and  \ref{ass3}), for any complex valued test function $
G\in C_0^\infty([0,+\infty)\times \T\times\bbT)$ we have
 \begin{equation}
\label{042701-20bis}
\lim_{n\to+\infty}\int_0^{t} \frac 1{n}  \sum_{x\in\T_n}
                                                 G_x(s)
   {\mathbb E}_n\Big[\big(\tilde r_x^{(n)}(s) \big)^2-\big(\tilde p_x^{(n)}(s) \big)^2\Big]\dd s=0.
\end{equation}
We introduce
\begin{align}
\label{Vn}
\tilde  V_n(t,\eta,k)&:=
 \tilde   Y_n^+(t,\eta,k)+\tilde   Y_n^-(t,\eta,k),\\
\tilde  R_n(t,\eta,k)&:=
 \tilde   Y_n^+(t,\eta,k)-\tilde   Y_n^-(t,\eta,k).\notag
\end{align}
By the Parseval identity,
\begin{equation}
\label{032701-20}
\sum_{\eta\in\T_n}\underset{{k\in \widehat{\T}_n}}{{\hat\sum}}\big|\tilde  V_n(t,\eta,k)\big|^2= \frac{1}{2(n+1)}\sum_{x,x'\in\T_n}\left(\bbE_n\big[\tilde
  r_x^{(n)}(t)\tilde r_{x'}^{(n)}(t)\big]-\bbE_n\big[\tilde p_x^{(n)}(t)\tilde
  p_{x'}^{(n)}(t)\big]\right)^2
\end{equation}
and for any  $G\in C^\infty(\T)$, cf.~\eqref{disc-approx},
\begin{align*}
&\sum_{\eta\in\T_n}\underset{{k\in \widehat{\T}_n}}{{\hat\sum}}
 \tilde  V_n(t,\eta,k)\hat
  G^\star(\eta)=\frac{1}{n+1}\sum_{x\in\T_n}\mathbb E_n\left[
  \tilde  r_x^2-\tilde  p_x^2\right]
  G^\star_x.
\end{align*}
To prove Proposition \ref{equipartition}  we need to show that
 \begin{equation}
\label{040511-19}
\lim_{n\to+\infty}\sum_{\eta\in\T_n}\underset{{k\in \widehat{\T}_n}}{{\hat\sum}}\int_0^{+\infty}\tilde  V_n(t,\eta,k)\hat
G^\star(t,\eta,k)
\dd t=0
\end{equation}
 for any  $
G\in C_0^\infty([0,+\infty)\times \T\times\bbT)$.
From \eqref{eq:closed-syst2f} 
we obtain,
\begin{align}\label{010511-19a} 
&\partial_t \tilde R_n =\; n^2 (\delta_n \hat s)(\eta,k)\;
\left(\tilde W_n^+-\tilde W_n^-\right)-i n^2 (\sigma_n s) \; \tilde V_n \vphantom{\int} 
+ 2\gamma
n^2\tilde R_n\vphantom{\int} \\
&
\vphantom{\int}
+\frac{in^2}{n+1}\; \Big(\bb E_n\left[
    \left(\tilde p_n(t)-\tilde p_0(t)\right)\hat{\tilde{p}}\left(t,k+\tfrac{\eta}{n+1}\right)\right]+
\bb E_n\left[\left(\tilde p_n(t)-\tilde
  p_0(t)\right)\hat{\tilde{p}}^\star\left(t,k\right)\right]\Big)\notag\\
&
-\frac{i\tilde \ga n^2}{n+1}\; \sum_{x=0,n}\left\{e^{2\pi i x k}\bb E_n\left[
   \tilde  p_{x}(t)\hat{\tilde r}\left(t,k+\tfrac{\eta}{n+1}\right)
  \right]
+
e^{-2\pi i x \left(k+\frac{\eta}{n+1}\right)}\bb E_n\left[\tilde  p_{x}(t) \hat{\tilde r}^\star(t,k)\right]\right\},\notag
\end{align} 
where $\sigma_n s$ is given by \eqref{012308a} and 
$$
(\delta_n \hat s)(\eta,k):=\sin(2\pi
  k)-\sin\big(2\pi\big(k+\tfrac{\eta}{n+1}\big)\big).
$$
Given $s\in(0,1)$ we let
$$
\hat\T_{n,s}:=\big\{k\in\hat\T_n :\,0\le k\le (n+1)^{-s}\big\}\qquad\mbox{and}\qquad
\hat\T_n^s:=\hat \T_n\setminus \hat\T_{n,s}.
$$
 We  write
\begin{equation}
\label{050511-19}
\sum_{\eta\in\T_n}\underset{{k\in \widehat{\T}_n}}{{\hat\sum}}\int_0^t\tilde V_n(s,\eta,k) \hat G^\star(s,\eta,k)\dd
  s={\cal O}_{n,s}+{\cal O}_n^s,
\end{equation}
where terms ${\cal O}_{n,s}$ and ${\cal O}_n^s $ correspond to the summation in $k$
over $\hat\T_{n,s}$ and $\hat\T_n^s$, respectively, and $s\in(0,1)$ is to be determined later on.
Denoting $\hat G_1:=-\hat G/(\sigma_n s)$ and $\hat G_2:=\delta_n \hat s\; \hat G/(\sigma_n s)$, and using \eqref{010511-19a} we  write
\begin{equation}\label{eq:decompO} {\cal O}_{n}^s=\mathrm{I}_n+\mathrm{II}_n+\mathrm{III}_n+\mathrm{IV}_{n}+\mathrm{V}_{n},\end{equation}
where
\begin{align*}
\mathrm{I}_n&:=\frac{i}{n^2}\sum_{\eta\in\T_n}\underset{{k\in \widehat{\T}_n^s}}{{\hat\sum}}\int_0^t\partial_s \tilde R_n(s,\eta,k) \hat G^\star_1(s,\eta,k)\dd s,\notag\\
\mathrm{II}_n&:=i\sum_{\eta\in\T_n}\underset{{k\in \widehat{\T}_n^s}}{{\hat\sum}}\int_0^t\left(\tilde W_n^+-\tilde W_n^-\right)(s,\eta,k) \hat
  G^\star_2(s,\eta,k)\dd s,\\
\mathrm{III}_n&:=-2\gamma i\sum_{\eta\in\T_n}\underset{{k\in \widehat{\T}_n^s}}{{\hat\sum}}\int_0^t
\tilde R_n (s,\eta,k) \hat
  G^\star_1(s,\eta,k)\dd s,\notag\\
\mathrm{IV}_{n}&:=-\frac{1}{n+1}\sum_{\eta\in\T_n}\underset{{k\in \widehat{\T}_n^s}}{{\hat\sum}}\int_0^t
\vphantom{\int}
\Big\{\bb E_n\left[
    \left(\tilde p_n(s)-\tilde
  p_0(s)\right)\hat{\tilde{p}}\left(s,k+\tfrac{\eta}{n+1}\right)\right]\\
&
\qquad \qquad\qquad\qquad\qquad +\vphantom{\int}
\bb E_n\left[\left(\tilde p_n(s)-\tilde
  p_0(s)\right)\hat{\tilde{p}}^\star\left(s,k\right)\right]\Big\}\hat
  G_1^\star(s,\eta,k)\dd s,\\
\mathrm{V}_{n} & :=-\frac{1}{n+1}\sum_{x=0,n}\sum_{\eta\in\T_n}\underset{{k\in \widehat{\T}_n^s}}{{\hat\sum}}\int_0^t\; \Big\{e^{2\pi i x k}\bb E_n\left[
   \tilde  p_{x}(s)\hat{\tilde p}\left(s,k+\tfrac{\eta}{n+1}\right)
  \right]
\\
&
\qquad \qquad\qquad\qquad\qquad  +
e^{-2\pi i x \left(k+\frac{\eta}{n+1}\right)}\bb E_n\left[\tilde  p_{x}(s) \hat{\tilde p}^\star(s,k)\right]\Big\}\hat
  G_1^\star(s,\eta,k)\dd s.
\end{align*}

\subsubsection*{Estimates of ${\cal O}_{n,s}$}
We show that
for any $s\in(0,1)$ 
\begin{equation}
\label{012111-19}
\lim_{n\to+\infty}{\cal O}_{n,s}=0.
\end{equation}
Choose an arbitrary $\varepsilon>0$. Since $G\in
C_0^\infty([0,+\infty)\times[0,1]\times\T)$ we can find a sufficiently
large $M>0$ such that
\begin{equation}
\label{012111-19a}
\sum_{|\eta|\ge M}\sup_{k\in\T,s\in[0,t]}|\hat G(s,\eta,k)|<\eps.
\end{equation}
We  write
$
{\cal O}_{n,s}={\cal O}_{n,s,M}+{\cal O}_{n,s}^M,
$
where the terms ${\cal O}_{n,s,M}$ and ${\cal O}_{n,s}^M$ correspond in \eqref{050511-19}
to the summation over $|\eta|\le M$ and $|\eta|> M$, respectively.
Thanks to \eqref{012111-19} and \eqref{042111-19} we conclude that
\begin{equation}
\label{020602-20OM}
|{\cal O}_{n,s}^M|\le t\bigg\{\sup_{s\in[0,t],\eta\in\T_n}\underset{{k\in \widehat{\T}_n}}{{\hat\sum}}\big|\tilde V_n(s,\eta,k)\big|\bigg\}\bigg\{ \sum_{|\eta|> M}\sup_{k\in\T,s\in[0,t]}\big|\hat G(s,\eta,k)\big|\bigg\}<\frac{\varepsilon}{2},
\end{equation}
provided $M>0$ is sufficiently large. On the other hand, using the
Cauchy-Schwarz inequality and \eqref{020511-19} we get
\begin{align*}
|{\cal O}_{n,s,M}|&\le \|\hat G\|_\infty\int_0^t \sum_{|\eta|\le M}\underset{{k\in \widehat{\T}_{n,s}}}{{\hat\sum}}|\tilde V_n(s,\eta,k)| \dd
  s\\
&
\le \|\hat G\|_\infty t \sup_{s\in[0,t]}\bigg\{\sum_{|\eta|\le
  M}\underset{{k\in \widehat{\T}_{n,s}}}{{\hat\sum}}|\tilde
  V_n(s,\eta,k)|^2\bigg\}^{1/2} \frac{M^{1/2}}{n^{s/2}}\lesssim
  \frac{(M\log^2(n+1))^{1/2}}{n^{s/2}}.
\end{align*}
Therefore
$
\lim_{n\to+\infty}|{\cal O}_{n,s,M}|=0.
$
Combining with  \eqref{020602-20OM} we obtain \eqref{012111-19}.

\subsubsection*{Estimates of ${\cal O}_{n}^s$}

To estimate ${\cal O}_{n}^s$ we use the decomposition \eqref{eq:decompO}.
By integration by parts formula we get
$
\mathrm{I}_n=\mathrm{I}_{n,1}+\mathrm{I}_{n,2},
$
where
\begin{align*}
\mathrm{I}_{n,1}&:=-\frac{i}{n^2}\sum_{\eta\in\T_n}\underset{{k\in \widehat{\T}_n^s}}{{\hat\sum}}\int_0^{+\infty} \tilde R_n(t)
\partial_t \hat G^\star_1(t)
\dd t,\\
\mathrm{I}_{n,2}&:=\frac{i}{n^2}\sum_{\eta\in\T_n}\underset{{k\in \widehat{\T}_n^s}}{{\hat\sum}}\tilde R_n(0)\hat
G^\star_1(0).
\end{align*}
Using the Cauchy-Schwarz inequality and $\hat
G_1(t,\eta,k)\equiv 0$, for $t\ge t_*$,  we obtain
\begin{multline*}\bigg|\sum_{\eta\in\T_n}\underset{{k\in \widehat{\T}_n^s}}{{\hat\sum}}\int_0^{+\infty} \tilde R_n(t)
\partial_t \hat G^\star_1(t)\dd t\bigg|
\\
\le \bigg\{\sum_{\eta\in\T_n}\underset{{k\in \widehat{\T}_n}}{{\hat\sum}}\int_0^{t_*}\left|\tilde R_n(t)
  \right|^2\dd t\bigg\}^{1/2}\bigg\{\sum_{\eta\in\T_n}\underset{{k\in \widehat{\T}_n^s}}{{\hat\sum}}\int_0^{t_*}\left|\partial_s \hat G^\star_1(t)
  \right|^2\dd t\bigg\}^{1/2}.
\end{multline*}
Let
$
\phi(\eta):=1/(1+\eta^2).
$
We have
\begin{equation}
\label{012211-19}
\sum_{\eta\in\T_n}\underset{{k\in \widehat{\T}_n^s}}{{\hat\sum}}\int_0^t\left|\partial_s \hat G^\star_1(s)
  \right|^2
\le \sum_{\eta\in\T_n}\underset{{k\in \widehat{\T}_n^s}}{{\hat\sum}}\frac{\phi(\eta)}{\big(k^2+(k+\eta/n)^2\big)^2}\lesssim \underset{{k\in \widehat{\T}_n^s}}{{\hat\sum}}\frac{1}{k^4}
\lesssim  n^{3s}.
\end{equation}
Thanks to \eqref{020511-19} we conclude that
\begin{align*}
\frac{1}{n^2}\bigg|\sum_{\eta\in\T_n}\underset{{k\in \widehat{\T}_n^s}}{{\hat\sum}}\int_0^{+\infty} \tilde R_n(t)
\partial_t \hat G^\star_1(t)\bigg|
\lesssim n^{3s/2-3}\log( n+1),\qquad n\ge1.
\end{align*}
Thus, for any $s\in(0,2)$ we get 
$
\lim_{n\to+\infty}\mathrm{I}_{n,1}=0.
$
The argument to prove that $\lim_{n\to+\infty}\mathrm{I}_{n,2}=0$ is analogous.

Concerning $\mathrm{II}_n$,
by the Cauchy-Schwarz inequality we get
\begin{equation*}
|\mathrm{II}_n|\le \bigg\{\sum_{\eta\in\T_n}\underset{{k\in \widehat{\T}_n}}{{\hat\sum}}\int_0^t\left|\left(\tilde W_n^+-\tilde
  W_n^-\right)(s,\eta,k) \right|^2\dd s\bigg\}^{1/2} 
\bigg\{\sum_{\eta\in\T_n}\underset{{k\in \widehat{\T}_n^s}}{{\hat\sum}}\int_0^t |\hat G_2(s,\eta,k)|^2\dd s\bigg\}^{1/2} .
\end{equation*}
Using \eqref{020511-19} we  estimate the right hand side by an
expression of the form
\begin{multline*}
 \frac{\log (n+1)}{n}
\bigg\{\sum_{\eta\in\T_n}\underset{{k\in \widehat{\T}_n^s}}{{\hat\sum}}\int_0^{t_*} \left(\frac{\delta_n \hat s}{\sigma_n s}\right)^2|\hat G(s,\eta,k)|^2\dd s\bigg\}^{1/2} \\
\lesssim \frac{\log (n+1)}{n}
\bigg\{\sum_{\eta\in\T_n}\underset{{k\in \widehat{\T}_n^s}}{{\hat\sum}}\frac{1}{(1+\eta^2)k^2}\bigg\}^{1/2} \lesssim \frac{\log (n+1)}{n^{1-s/2}}\xrightarrow[n\to\infty]{} 0+, \quad \text{provided }s\in(0,2).
\end{multline*}

\subsubsection*{Estimates of $\mathrm{III}_n$}

By the Cauchy-Schwarz inequality we  write
\begin{align*}
|\mathrm{III}_n|&\le 2\gamma\sum_{\eta\in\T_n}\underset{{k\in \widehat{\T}_n^s}}{{\hat\sum}}\int_0^{t_*} \left|
\tilde R_n (s,\eta,k) \hat
  G^\star_1(s,\eta,k)\right|\dd s\\
&
\le  2\gamma\bigg\{\sum_{\eta\in\T_n}\underset{{k\in \widehat{\T}_n}}{{\hat\sum}}\int_0^{t_*} \left|
\tilde R_n (s,\eta,k)\right|^2\dd s\bigg\}^{1/2}\bigg\{\sum_{\eta\in\T_n}\underset{{k\in \widehat{\T}_n^s}}{{\hat\sum}}\int_0^{t_*} \frac{\left|
\hat
  G(s,\eta,k)\right|^2}{(\si_n s)^2}\dd s\bigg\}^{1/2}.
\end{align*}
Using \eqref{020511-19}, together with \eqref{012211-19}, we  estimate
the right hand side by
\begin{align*}
  \frac{Cn^{3s/2}\log (n+1)}{n}\xrightarrow[n\to\infty]{} 0+, \quad \text{provided }s\in(0,\tfrac23).
\end{align*}

\subsubsection*{Estimates of $\mathrm{IV}_{n}$ and $\mathrm{V}_n$}

The argument in both cases is the same, so we only consider $\mathrm{IV}_{n}$.
We  write $\mathrm{IV}_{n}=\mathrm{IV}_{n,1}+\mathrm{IV}_{n,2} $, where
\begin{align*}
\mathrm{IV}_{n,1}&:=-\frac{1}{n+1}\sum_{\eta\in\T_n}\underset{{k\in \widehat{\T}_n^s}}{{\hat\sum}}\int_0^t
\vphantom{\int}
\bb E_n\left[
    \left(\tilde p_n(s)-\tilde
  p_0(s)\right)\hat{\tilde{p}}\left(s,k+\tfrac{\eta}{n+1}\right)\right] \hat G_1^\star(s,\eta,k)\dd s,\\
\mathrm{IV}_{n,2}&:=-\frac{1}{n+1}\sum_{\eta\in\T_n}\underset{{k\in \widehat{\T}_n^s}}{{\hat\sum}}\int_0^t\vphantom{\int}
\bb E_n\left[\left(\tilde p_n(s)-\tilde
  p_0(s)\right)\hat{\tilde{p}}^\star\left(s,k\right)\right]\hat
  G_1^\star(s,\eta,k)\dd s.
\end{align*}
By the Plancherel identity we  write
\begin{equation*}
\mathrm{IV}_{n,2}=-\frac{1}{n+1}\sum_{x \in 
  \T_n}\int_0^t\vphantom{\int}
\bb E_n\Big[\left(\tilde p_n(s)-\tilde
  p_0(s)\right)\tilde{p}_x\left(s\right)\Big]\tilde
  G_{1,x}^\star(s)\dd s,
\end{equation*}
where
$$
\tilde
  G_{1,x}^\star(s):=\sum_{\eta\in\T_n}\underset{{k\in \widehat{\T}_n^s}}{{\hat\sum}}e^{2\pi i k x}\hat
  G_1^\star(s,\eta,k).
$$
As a result, invoking \eqref{022111-19}, the Cauchy-Schwarz inequality and
Plancherel identity,
we can find a constant $C>0$, independent of $n$, and such that
\begin{align*}
&
|\mathrm{IV}_{n,2}|\le \frac{1}{n+1}\sum_{x \in 
  \T_n}\int_0^t\vphantom{\int}
\left\{\bb E_n\big[|\tilde p_n(s)|+|\tilde
  p_0(s)|\big]^2 \right\}^{1/2}\left\{\bb E_n\left[\tilde{p}_x^2\left(s\right)\right]\right\}^{1/2}|\tilde
  G_{1,x}(s)|\dd s\\
&
\le \frac{1}{n+1}\vphantom{\int}
\left\{\int_0^t \bb E_n\big[|\tilde p_n(s)|+|\tilde
  p_0(s)|\big]^2\sum_{x \in 
  \T_n}\bb E_n\left[\tilde{p}_x^2\left(s\right)\right]\dd s\right\}^{1/2}\left\{\int_0^t\sum_{x \in 
  \T_n}|\tilde
  G_{1,x}(s)|^2 \dd s \right\}^{1/2}
\\
&
\le  \frac{C}{(n+1)^{1/2}}\vphantom{\int}
\left\{\int_0^t\bb E_n\left[\tilde p^2_n(s)+\tilde
  p_0^2(s)\right] \dd s\right\}^{1/2}\bigg\{\int_0^t \underset{{k\in \widehat{\T}_n^s}}{{\hat\sum}}\Big|\sum_{\eta\in\T_n}\hat
  G_1^\star(s,\eta,k)\Big|^2 \dd s \bigg\}^{1/2}.
\end{align*}
Invoking Lemma \ref{lem:bound2}, see \eqref{eq:ex-1}, we conclude that
\begin{equation*}
|\mathrm{IV}_{n,2}|
\lesssim   \frac{1}{(n+1)^{1/2}}\vphantom{\int}
\left\{\left( \underset{{k\in \widehat{\T}_n^s}}{{\hat\sum}}\frac{1}{k^4}\right)\left(\sum_{\eta\in\T_n}\frac{1}{1+\eta^2}\right)
  \right\}^{1/2}\lesssim \frac{n^{3s/2}}{(n+1)^{1/2}}\xrightarrow[n\to\infty]{} 0+, 
\end{equation*}
provided $s\in(0,\frac13)$. The proof of the fact that $\lim_{n\to+\infty}\mathrm{IV}_{n,1}$ follows the same lines as the argument presented
above. 

\subsubsection{Conclusion} Therefore, for $s \in (0,\frac13)$ we have proved that both ${\cal O}_{n,s}$ and ${\cal O}_n^s$
{tend to zero} as $n\to\infty$, and we conclude \eqref{040511-19}.

 This ends the proof of Proposition \ref{equipartition}.

\appendix

\section{Proof of (\ref{011412-19})}
\label{sec:app1}

Clearly
\begin{equation}
\label{021412-19}
|a_n(\eta)|\le \frac{2}{\eta^2-4},\quad \eta^2>8.
\end{equation}
Suppose that $\eta^2\in(0,8)$. 
Let $\Phi(u,k):=[4\sin^2(\pi  k)-u]^2+4\ga^2 u$. From \eqref{a-n} we see
that 
$$
|a_n(\eta)|\lesssim \underset{{k\in \widehat{\T}_n^s}}{{\hat\sum}}\frac{\sin^2(\pi k)}{\Phi(\eta^2,k)}.
$$ 
After a simple calculation one concludes that 
$
\ga^2\sin^4(\pi
  k)\lesssim \Phi(u,k)$ for $u\in(0,8)$, $k\in\T_n$.
Hence $|a_n(\eta)|\lesssim 1$, $\eta^2\in(0,8)$. This together with \eqref{021412-19} yield \eqref{011412-19}.

\section{Proof of (\ref{021612-19})}

\label{sec:app2}

Recall that
\begin{align}
\label{041612-19}
e_{{\rm d},n}(\eta)=\underset{{k\in \widehat{\T}_n}}{{\hat\sum}}\frac{i\eta +2\ga+2 \tilde\ga\sin^2(\pi
  k)}{-\eta^2+4\sin^2(\pi
  k)+2i\ga\eta}=\frac{\Xi(\eta,k)}{|\Theta(\eta,k)|^2},
\end{align}
where
\begin{align*}
&\Xi(\eta,k):
=\sin^2(\pi
  k)\big[-2 \eta^2\tilde\ga+8\ga+8 \tilde\ga\sin^2(\pi
  k)\big]+i\eta[-\eta^2+4\sin^2(\pi
  k) (1-\tilde\ga\ga)-4\ga^2],\\
&
\Theta(\eta,k):=-\eta^2+4\sin^2(\pi
  k)+2i\ga\eta.
\end{align*}
We have
\begin{equation}
\label{011812-19}
\frac{1}{|\Theta(\eta,k)|^2}\ge \frac{1}{4\ga^2\eta^2}.
\end{equation}
Let $\rho:=\sin^2(\pi
  k)$.
Let $\Gamma$ be the parabola in the $(x,y)$ plane described by the
system  of equations
\begin{align*}
&
f(\rho)=\rho\big[-2 \eta^2\tilde\ga+8\ga+8 \tilde\ga\rho\big],\\
&
g(\rho)=\eta[-\eta^2+4\rho (1-\tilde\ga\ga)-4\ga^2],\quad \rho\in\R.
\end{align*}
By a direct calculation one can  check that 
there exist two tangent  lines  to $\Gamma$ passing through $(0,0)$.
Hence  $(0,0)\not \in{\rm Conv}(\Gamma) $ - the closed region bounded
by  $\Gamma$. Denote by ${\rm d}_*>0$ the distance between 
$(0,0)$ and ${\rm Conv}(\Gamma) $ and $P_*$ the respective nearest neighbor
projection of $(0,0)$. 
Note that
\begin{equation*}
D^2(\rho):=f^2(\rho)+g^2(\rho)=\rho^2[-2 \eta^2\tilde\ga+8\ga+8
  \tilde\ga\rho]^2
+\eta^2[\eta^2 +4\ga^2-4\rho (1-\tilde\ga\ga)]^2.
\end{equation*}
When $\tilde\ga\ga\ge 1$ then 
$D^2(\rho)\ge 4\ga^2\eta^2.$
If, on the one hand, $\tilde\ga\ga <1$, then for
$
\rho \le \ga^2+\eta^2/4,
$ 
we have
$$
D(\rho)\ge \tilde\ga\ga |\eta|\left(\eta^2 +4\ga^2\right)^{1/2}\ge 2\tilde\ga\ga^2 |\eta|.
$$
If, on the other hand,
$
\rho > \ga^2+\eta^2/4,
$ 
then,
$$
-2 \eta^2\tilde\ga+8\ga+8
  \tilde\ga\rho\ge -2 \eta^2\tilde\ga+8\ga+2
  \tilde\ga (4\ga^2+\eta^2)\ge 8\ga+8
  \tilde\ga \ga^2>0.
$$
Therefore,
\begin{align*}
&D(\rho)\ge \rho\big|-2 \eta^2\tilde\ga+8\ga+8
  \tilde\ga\rho\big|\ge 2(\ga+
  \tilde\ga \ga^2)(4\ga^2+\eta^2)\ge 8\ga^3(1+
  \tilde\ga \ga).
\end{align*}
We conclude that
$$
{\rm Re}\,\Big(\Xi(\eta,k)\cdot  \frac{P^\star_*}{|P_*|}\Big)\ge {\rm d}_*\ge 2\ga^2\min\big\{4\ga(1+
  \tilde\ga \ga), \tilde\ga |\eta|\big\}.
$$
Therefore, by virtue of \eqref{041612-19} and \eqref{011812-19} we
conclude
 \begin{align}
\label{011712-19}
&|e_{{\rm d},n} (\eta)|
\ge 2\ga^2\; \frac{\min\{4\ga(1+
  \tilde\ga \ga), \tilde\ga |\eta|\}}{4\ga^2\eta^2}
\end{align}
and
\eqref{021612-19} follows.

\section{Proof of (\ref{022012-19})}

\label{sec:app3}

Note that
\begin{align*}
e_{{\rm s},n}(\eta) 
=1+4 \tilde\ga \ga\eta^2\underset{{k\in \widehat{\T}_n}}{{\hat\sum}}\frac{\cos^2(\pi k)}{|4\sin^2(\pi
  k)-\eta^2+2i\ga\eta|^2}
+2 \tilde\ga i\eta\underset{{k\in \widehat{\T}_n}}{{\hat\sum}}\frac{\cos^2(\pi k) [4\sin^2(\pi k)-\eta^2]}{|4\sin^2(\pi k)-\eta^2+2i\ga\eta|^2}
\end{align*}
and \eqref{022012-19} follows.
\qed

\section{Proof of (\ref{042712-19cc}) and  (\ref{042712-19c})}

\label{sec:app4}

We only prove  (\ref{042712-19cc}), as the argument for
(\ref{042712-19c}) follows the same lines.
It is clear from \eqref{rho-d} that 
\begin{align}
\label{042712-19}
 |\pi_{{\rm d},n}(\eta)|\lesssim \frac{1}{\eta^2+1},\quad \eta^2>8.
\end{align}
By an elementary calculation one gets
$$
\left\{\frac{1}{2}\left[(4\sin^2(\pi  k)-\eta^2)^2+4\ga^2 \eta^2\right]\right\}^{1/2} \ge
 \big(8\ga^2\sin^4(\pi k)+4\ga^2 \eta^2\big)^{1/2} 
$$
for $\eta^2<8$.
Hence,
$$
 |\pi_{{\rm d},n}(\eta)|\lesssim  \underset{{k\in \widehat{\T}_n}}{{\hat\sum}}\frac{|\sin(\pi
  k)|}{|\eta|+\sin^2(\pi k)}\lesssim \log\Big(1+\frac{1}{|\eta|}\Big),\qquad \eta^2<8
$$
and combining with \eqref{042712-19} we conclude (\ref{042712-19cc}).

\section{Proofs of (\ref{062712-19}) and (\ref{092712-19})}

\label{sec:app5}

Estimate
(\ref{092712-19}) follows straightforwardly from the definition
\eqref{062712-19d}
and assumption \eqref{spec-bound} on $\hat {\bar p}(0,k)$.

Concerning  (\ref{062712-19}) 
we estimate first in the case  $\eta^2\le 8$. 
 Hypothesis
\eqref{spec-bound}  allows us then to estimate
$|\rho_{{\rm d},n}(\eta)|\lesssim 1/|\eta|$ for $n\ge1$, cf.~\eqref{rho-d}. Combing with \eqref{022012-19}
 we conclude $|\rho_{{\rm d},n}(\eta)/e_{{\rm d},n}(\eta)|\lesssim1$. 

In the case  $\eta^2> 8$, using the fact that
$\underset{{k\in \widehat{\T}_n}}{{\hat\sum}}\widehat{\bar
    r}(0,k)=r_0=0$, we  write
\begin{align}
\label{052712-19a}
 \Big|\rho_{{\rm d},n}(\eta)\big|&=\bigg|\underset{{k\in \widehat{\T}_n}}{{\hat\sum}}\frac{(i\eta+2\ga)\widehat{\bar
    r}(0,k)}{4\sin^2(\pi k)-\eta^2+2i\ga \eta}-\underset{{k\in \widehat{\T}_n}}{{\hat\sum}}\frac{(i\eta+2\ga)\widehat{\bar
    r}(0,k)}{-\eta^2+2i\ga \eta}\bigg|\\
&
=\bigg|\underset{{k\in \widehat{\T}_n}}{{\hat\sum}}\frac{4\sin^2(\pi k) (i\eta+2\ga)\widehat{\bar
    r}(0,k)}{[4\sin^2(\pi k)-\eta^2+2i\ga \eta][-\eta^2+2i\ga
  \eta]}\bigg|\lesssim \frac{1}{1+|\eta|^3},\notag
\end{align}
and  (\ref{062712-19}) follows.

\section{Proof of (\ref{052312-19x})}


\label{sec:app6}

From the definition of $c_n$ we conclude that
\begin{equation}
\label{022312-19}
|c_n(\eta)|\lesssim \frac{1}{|\eta|+\eta^2},\quad \eta\in\R,\,n\ge1.
\end{equation}
By an elementary calculation one gets, for any $w\in \mathbb{C}\setminus[-1,1]$
\begin{equation}
\label{031412-19}
\underset{{k\in \widehat{\T}_n}}{{\hat\sum}}\frac{e^{2\pi i
    k}}{e^{4\pi i
    k}-2(1+w) e^{2\pi i
    k}+1}=\frac{(\Phi ^{n+1}_+(1+w)+1) \Phi _+(1+w)}{(\Phi
  ^2_+(1+w)-1)(1-\Phi ^{n+1}_+(1+w))},
\end{equation}
where
$\Phi_+:  \mathbb{C}\setminus[-1,1]\to \mathbb D\setminus\{0\}$ is the inverse to
the function
$$
J(z):=\frac{1}{2}\left(z+\frac{1}{z}\right) ,\quad z\in \mathbb D\setminus\{0\}.
$$
Here $\mathbb D:=\{z\in\mathbb{C}:|z|<1\}$.
Using the branch of the square root that maps $\mathbb{C}\setminus(-\infty,0]$
into $\mathbb{C}_+:=\{w:\,{\rm Re}\,w>0\}$ we  write
\begin{equation}
\label{phi+}
\Phi_+(w)=w-\sqrt{w^2-1},\qquad w\in \mathbb{C}_+\cap (\mathbb{C}\setminus[-1,1]).
\end{equation}
Using  \eqref{c-eta}  and \eqref{031412-19}
we
 write
\begin{align}
\label{c-eta1}
c_n(\eta)&=-\frac12-\frac{4-\eta^2+2i\ga \eta}{2}\underset{{k\in \widehat{\T}_n}}{{\hat\sum}}\frac{e^{2\pi ik}}{e^{4\pi i k}-2(1+w)e^{2\pi ik}+1}\notag\\
&
=-\frac12-\left(2-\frac{\eta^2}{2}+i\ga \eta\right)b_n(\eta),
\end{align}
where
$w(\eta):=-\eta^2/2+i\ga\eta$
and
$$
b_n(\eta):=\frac{(\Phi ^{n+1}_+(1+w)+1) \Phi _+(1+w)}{(\Phi ^2_+(1+w)-1)(1-\Phi ^{n+1}_+(1+w))}.
$$
Suppose now that $\eta^2\le \delta$ for a sufficiently small
$\delta>0$ so that
\begin{align*}
&\Big|\Phi_+\Big(1-\frac{\eta^2}{2}+i\ga\eta
  \Big)\Big|<1-\frac{1}{2}(\ga|\eta|)^{1/2},\quad \eta^2\le \delta.
\end{align*}
Let 
$
A_n:=\left\{\eta:\,\eta^2>2^{-1}\sin^2(\pi/n)\right\}.
$
 Then, for any $\eta\in A_n$ we have
$
\eta^2\ge 2/(\pi n)^2,
$
therefore, thanks to \eqref{phi+},
\begin{align*}
&\Big|\Phi_+\Big(1-\frac{\eta^2}{2}+i\ga\eta
  \Big)\Big|\le 1-\frac{1}{2}\Big(\frac{2^{1/2}\ga}{\pi n}\Big)^{1/2},\qquad |\eta|<\delta
\end{align*}
and
\begin{align*}
&\limsup_{n\to+\infty}\sup_{\eta\in A_n,|\eta|<\delta}\Big|\Phi_+\left(1-\frac{\eta^2}{2}+i\ga\eta
  \right)\Big|^{n+1}\le \limsup_{n\to+\infty}\bigg[1-\frac{1}{2}\Big(\frac{2^{1/2}\ga}{\pi
  n}\Big)^{1/2}\bigg]^{n+1}=0.
\end{align*}
Therefore, 
$$
|b_n(\eta)|\lesssim 
\frac{1}{|\eta|^{1/2}},\quad \mbox{for }\eta\in A_n,\quad n\ge1.
$$
Combining with \eqref{022312-19} we conclude that 
\eqref{052312-19x} holds for
$\eta^2\ge 2^{-1}\sin^2(\pi/n)$.
When $\eta^2< 2^{-1}\sin^2(\pi/n)$ we conclude directly from \eqref{c-eta} that
$$
|c_n(\eta)|\lesssim \underset{{k\in \widehat{\T}_n}}{{\hat\sum}}\frac{1}{4\sin^2(\pi k)+|\eta|
}\lesssim \int_0^1\frac{dk}{k^2+|\eta|}\lesssim \frac{1}{|\eta|^{1/2}}
$$
and \eqref{052312-19x} is also in force.

\bigskip

\bibliographystyle{amsalpha}

\end{document}